\documentclass[12pt]{article}
\usepackage{graphicx} 
\usepackage[a4paper, total={6.5in, 9in}]{geometry} 
\usepackage{amsfonts}
\usepackage{amsthm}
\usepackage{amsmath}
\usepackage[sortcites = true, backend=biber, style=numeric, url = false]{biblatex} 
\addbibresource{refs.bib}
\usepackage[shortlabels]{enumitem} 
\usepackage{hyperref}
\hypersetup{
    colorlinks=true,
    linkcolor=blue,
    filecolor=magenta,      
    urlcolor=cyan,
    pdftitle={Overleaf Example},
    pdfpagemode=FullScreen,
    }
\usepackage[capitalize]{cleveref} 
\usepackage{xcolor} 
\usepackage{quiver} 
\usepackage{orcidlink}
\usepackage{authblk} 
\usepackage{physics} 


\usepackage{commands}

\newenvironment{keywords}
{
\begin{center}
\textbf{Keywords}\\
\vspace{0.17cm}
\begin{minipage}{14.5cm}}
{\footnotesize
\end{minipage}
\end{center}}

\newenvironment{Abstract}
{
\begin{center}
\textbf{Abstract}\\
\vspace{0.25cm}
\begin{minipage}{14.5cm}}
{\footnotesize
\end{minipage}
\end{center}}

\newcommand{\ran}{\operatorname{rank}} 
\newcommand{\codim}{\operatorname{codim}} 
\newcommand{\mail}[1]{\small\href{mailto:#1}{#1}}

\newtheorem{theorem}{Theorem}[subsection]
\newtheorem{proposition}{Proposition}[subsection]
\newtheorem{lemma}{Lemma}[subsection]

\theoremstyle{definition}
\newtheorem{example}{Example}[subsection]
\newtheorem{Def}{Definition}[subsection]
\newtheorem{remark}{Remark}[subsection]
\newtheorem{obs}{Observation}[subsection]

\begin{document}
\title{\huge Coisotropic reduction in Multisymplectic Geometry}

\author{Manuel de León \orcidlink{0000-0002-8028-2348} \\ \mail{mdeleon@icmat.es}}
\affil{Instituto de Ciencias Matemáticas, Campus Cantoblanco, Consejo Superior de Investigaciones Científicas, C/Nicolás Cabrera, 13–15, Madrid 28049, Spain}
\affil{Real Academia de Ciencias Exactas, Físicas y Naturales de España, C/Valverde, 22, Madrid 28004, Spain}

\author{Rubén Izquierdo-López \orcidlink{0009-0007-8747-344X} \\ \mail{rubizqui@ucm.es}}
\affil{Instituto de Ciencias Matemáticas, Campus Cantoblanco, Consejo Superior de Investigaciones Científicas, C/Nicolás Cabrera, 13–15, Madrid 28049, Spain}

\date{\today}

\maketitle

\begin{Abstract}
In this paper we study coisotropic reduction in multisymplectic geometry. On the one hand, we give an interpretation of Hamiltonian multivector fields as Lagrangian submanifolds and prove that $k$-coisotropic submanifolds induce a Lie subalgebra in the algebra of Hamiltonian $(k-1)$-forms, similar to how coisotropic submanifolds in symplectic geometry induce a Lie subalgebra under the Poisson bracket. On the other hand, we extend the classical result of symplectic geometry of projection of Lagrangian submanifolds in coisotropic reduction to bundles of forms, which naturally carry a multisymplectic structure.
\end{Abstract}

\begin{keywords}
Multisymplectic manifolds, coisotropic submanifolds, Lagrangian submanifolds, coisotropic reduction, graded Lie algebras.
\end{keywords}

\tableofcontents

\section{Introduction}

Multisymplectic geometry is the natural framework in which to formulate classical field theories, just as symplectic geometry plays that key role in Lagrangian and Hamiltonian mechanics \cite{abraham2008foundations,ClassicalFieldsSniaiycki,dLR}. Indeed, the bundles of exterior forms are naturally equipped with a multisymplectic form, in the same way that for the bundle of $1$-forms (i.e. the cotangent bundle of the manifold) the natural structure is a symplectic form. However, multisymplectic geometry exhibits a much higher degree of complexity, dealing with differential forms of higher degree.  These differences make multisymplectic geometry richer but at the same time more complicated, and if the holy grail of classical field theories is to seek a full extension of the results in symplectic mechanics, this task is far from being fully achieved. This paper tries to cover some aspects that have already been partially dealt with in previous papers \cite{Ibort1999OnTG,HamiltonianStructuresIbort}, thus initiating an ambitious plan that we hope to complete in the coming years.\\

One of the key aspects of this new approach is not to consider any notion of regularity in the definition of a multisymplectic form, as is usually done in applications to classical field theories \cite{KdVequationGotay,GotayMultisymplecticFramework,MarkSpaceTimeDecomposition,Ibort1999OnTG,Roman_Roy_2009,NarcisoMultisymplecticFormalism,Invitation2019}. This allows us to work with greater flexibility, recovering regularity as a particular case. Our main objective in this paper is to study the submanifolds of a multisymplectic manifold, in particular the relations between Lagrangian and coisotropic submanifolds \cite{Ibort1999OnTG,deleon2003tulczyjews,Sevestre_2019}. In doing so, we prove a coisotropic reduction theorem which generalises the one already known for symplectic geometry. The interest of this reduction lies in the fact that the Lagrangian submanifolds are the geometric interpretation of the dynamics, and if one of them has a clean intersection with a coisotropic one, it can be reduced to the quotient of the latter while maintaining the Lagrangian character (and so, providing a reduced dynamics) \cite{WeinsteinLagrangianNeighborhood,abraham2008foundations}. Very relevant by-products of these notions and results are the construction of graded brackets and the interpretation of a coisotropic submanifold in terms of these brackets, as well as the study of currents and conserved quantities \cite{HamiltonianStructuresIbort,FORGER_2003,Blacker_2021} (see also \cite{AitorSymmetries,ConservedQuantitiesMarco}).
We would like to mention that the graded brackets that are used in this paper are related to the notion of higher-Poisson structures (see \cite{Bursztyn2015}), a generalization of the notion of a Poisson structure. \\

The paper is structured as follows. \cref{section:MultisymplecticGeometry} introduces the fundamental concepts of both the multilinear version of symplectic geometry in the realm of vector spaces and the corresponding translation to the realm of differentiable manifolds. In this section we introduce the main examples of multisymplectic vector spaces and multisymplectic manifolds. In the first case, we pay special atention to multisymplectic structures of forms arising from a vector space together with a ``vertical" subspace. Similarly, in the second case we study bundles of forms over a manifold together with a regular ``vertical" distribution. In \cref{Section:Hamiltonianstructures}, we develop the notion of Hamiltonian vector fields and Hamiltonian forms; it should be noticed that we do not ask for any regularity conditions from the multisimplectic forms, so we have to work with the respective kernels to avoid singularities. Thus we can interpret multivector fields as Lagrangian submanifolds of multisymplectic manifolds by naturally extending the results known in symplectic geometry. At the same time, we complete the results of previous work, which allow us to introduce a graded Lie algebra of brackets. We can also consider an abstract framework for the study of currents and conserved quantities. Finally, in \cref{section:CoisotropicSubmanifolds} we obtain the extension of the coisotropic reduction theorem as well as the reduction of Lagrangian submanifolds via coisotropic reduction. To do that, we need to extend some theorems on coisotropic manifolds due to Weinstein. The paper ends with some conclusions and a list of potential future work in \cref{conclusions}.

\section{Multisymplectic geometry}\label{section:MultisymplecticGeometry}
The main concepts and results of multisymplectic vector spaces and manifolds are taken from \cite{Ibort1999OnTG,HamiltonianStructuresIbort,deleon2003tulczyjews}. As mentioned, we chose a more general approach which includes possibly degenerate forms. Most of the proofs are included for the sake of completeness.
\subsection{Multisymplectic vector spaces}\label{Subsection:MultisymplecticVectorSpapces}
\begin{Def}[Multisymplectic vector space] A \textbf{multisymplectic vector space} of order $k$ is a pair $(V, \omega),$ where $\omega$ is a $(k+1)$-form on $V$, namely $\omega \in \bigwedge^{k+1} V^\ast.$ The multisymplectic vector space and the form will be called \textbf{non-singular} or \textbf{regular} if the map given by contraction $$V \xrightarrow{\flat_1} \bigwedge^k V; \,\, v \mapsto \iota_v \omega$$ defines a monomorphism, that is, $\iota_v \omega = 0$ only when $v = 0.$
\end{Def}
\begin{obs} This terminology is not standard. In the literature, an arbitrary form $\omega \in \bigwedge^k V^\ast$ is usually called \textit{pre-multisymplectic}, but we choose this terminology for the sake of simplicity. We prefer this general approach because in \cref{Section:Hamiltonianstructures}, ``singular'' multisymplectic manifolds (what we simply call multisymplectic) appear naturally. Nevertheless, all the definitions given in the text coincide with the usual definitions when $\omega$ is non-degenerate. 
\end{obs}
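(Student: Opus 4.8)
The only substantive mathematical content of this observation is its final sentence: that the present, deliberately degenerate framework specialises to the classical one precisely when $\omega$ is non-degenerate. I would treat this not as a single computation but as a consistency check carried out definition by definition, and the plan is to organise that check around the kernel of the contraction map. The natural starting point is the base definition itself, where there is in fact nothing to prove beyond matching wording: the classical non-degeneracy requirement on a $(k+1)$-form is exactly the demand that $\iota_v \omega = 0$ force $v = 0$, which is the injectivity of $\flat_1$. So the notion of \emph{regular} introduced above already coincides, verbatim, with the standard one.

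For every later notion the plan is to isolate the role played by the subspace $\{v \in V : \iota_v \omega = 0\}$, the kernel of $\flat_1$. In the general (possibly singular) theory each definition must be corrected by this kernel, so that orthogonal complements, the isotropic/coisotropic/Lagrangian conditions, and the spaces of Hamiltonian forms remain well behaved when $\flat_1$ fails to be a monomorphism. The key step is then to verify that every such correction is \emph{vacuous} once the kernel is trivial: when $\omega$ is non-degenerate the kernel vanishes, $\flat_1$ becomes injective, and each kernel-dependent clause collapses onto its classical counterpart. This reduces the whole observation to checking, one definition at a time, that setting the kernel to zero literally recovers the standard formulation.

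On manifolds I would run the pointwise version of the same argument: regularity of the multisymplectic form is regularity of $\omega_x$ at each point, so the fibrewise kernels vanish, the distributions built from them degenerate to the classical data, and the submanifold conditions reduce fibrewise to the usual ones. I do not expect any deep obstacle here, since the content is simply that the degenerate corrections switch off exactly when $\omega$ is non-degenerate. The hard part is rather bookkeeping and timing: the statement quantifies over definitions that have not yet been introduced, so substantiating it requires that each later definition be phrased so as to agree with the classical one \emph{on the nose}, and not merely up to isomorphism, in the trivial-kernel case, and that no construction secretly depends on an auxiliary choice that persists in the non-degenerate limit. Tracking the kernel faithfully through each construction is therefore the main, and essentially the only, thing to watch.
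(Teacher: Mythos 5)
Your proposal is correct and coincides with the paper's own (implicit) justification: the paper never proves this observation beyond recording precisely your check in the observation that follows the definition of $j$-isotropic, $j$-coisotropic and $j$-Lagrangian subspaces, namely that regularity means $\ker \flat_1 = 0$, so the kernel-correction terms in ``$W^{\perp,j} \subseteq W + \ker\flat_1$'' and ``$W = W^{\perp,j} + \ker\flat_1$'' switch off and the standard definitions are recovered verbatim, with the manifold case being the same statement fibrewise. One caution on wording: non-degeneracy only forces $\ker\flat_1 = 0$, not $\ker\flat_q = 0$ for $q \geq 2$ (the paper itself notes that $\widetilde{\Omega}^q_M$ may be degenerate even when $\omega$ is not), so your blanket claim that ``each kernel-dependent clause collapses'' must be read as restricted to clauses involving $\flat_1$ --- which happen to be the only ones occurring in the definitions the observation is about, so the conclusion stands.
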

The $(k +1)$-form $\omega$ induces the following map:
\begin{Def} Let $(V, \omega)$ be a multisymplectic vector space of order $k$. Define the map induced by contraction $$\flat_q : \bigwedge^{q} V \rightarrow \bigwedge^{k + 1 - q} V^\ast, \,\, u \mapsto \iota_u \omega.$$
\end{Def}
\begin{remark} A multisymplectic vector space of order $k$, $(V, \omega)$ is regular if and only if $\ker \flat_1 = 0.$
\end{remark}

\begin{obs} For $k = 1$, all possible forms are classified up to linear isomorphism. Indeed, it is a well known fact that $\omega_1, \omega_2 \in \bigwedge^2 V^\ast$ are in the same $GL(V)$-orbit if and only if $\rank \omega_1 = \rank \omega_2.$ In particular, when $\dim V$ is even, every pair of non-degenerate $2$-forms are in the same $GL(V)$-orbit. For general order $k > 2$, the classification is far from trivial. For example, $3$-forms are not classified for arbitrary $\dim V$. For a complete table of the number of $GL(V)$-orbits in $\bigwedge^k V^\ast$, we refer to \cite{orbitsforms}. 
\end{obs}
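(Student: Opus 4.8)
The content to be established is the classification claim for $k=1$: for $\omega_1,\omega_2 \in \bigwedge^2 V^\ast$, there is a $g \in GL(V)$ with $g^\ast\omega_1 = \omega_2$ if and only if $\rank\omega_1 = \rank\omega_2$. The plan is to treat the two implications separately, the forward one being essentially immediate and the reverse one resting on a Darboux-type normal form. The final assertions of the observation, concerning the non-triviality of the classification for $k>2$ and the enumeration of orbits, are literature statements drawn from \cite{orbitsforms} and are not proved here.

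For the \emph{only if} direction, I would note that the rank of a $2$-form $\omega$ is by definition the rank of the induced map $\flat_1 \colon V \to V^\ast$, $v \mapsto \iota_v\omega$. If $\omega_2 = g^\ast\omega_1$ with $g \in GL(V)$, then the map $\flat_1$ attached to $\omega_2$ equals $g^\ast \circ \flat_1^{\omega_1} \circ g$ under the standard identification, a composition of the map for $\omega_1$ with two isomorphisms; hence the two maps, and therefore the two forms, have equal rank.

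For the \emph{if} direction the key input is the normal form theorem: if $\rank\omega = 2r$, there is a basis $e_1,\dots,e_n$ of $V$, with dual basis $e^1,\dots,e^n$, such that $\omega = \sum_{i=1}^r e^{2i-1}\wedge e^{2i}$ and $e_{2r+1},\dots,e_n$ span $\ker\flat_1$. I would prove this by induction on $\dim V$. If $\omega = 0$ there is nothing to do. Otherwise choose $v,w$ with $\omega(v,w)\neq 0$ and rescale so that $\omega(v,w)=1$; the plane $W=\langle v,w\rangle$ carries a nondegenerate restriction of $\omega$. The main step is to establish the splitting $V = W \oplus W^{\perp_\omega}$, where $W^{\perp_\omega} = \{u : \omega(u,v)=\omega(u,w)=0\}$: for any $u$, the vector $u - \omega(u,w)\,v + \omega(u,v)\,w$ lies in $W^{\perp_\omega}$ (a direct check using $\omega(v,w)=1$ and $\omega(v,v)=\omega(w,w)=0$), while $W \cap W^{\perp_\omega} = 0$ because $\omega|_W$ is nondegenerate. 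Then $\omega$ restricts to a skew-symmetric form on $W^{\perp_\omega}$ of rank $2r-2$, and the inductive hypothesis applied there, together with the pair $v,w$, completes the basis.

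With the normal form in hand the reverse implication is immediate: two forms of equal rank $2r$ admit bases in which each is written as the \emph{same} expression $\sum_{i=1}^r e^{2i-1}\wedge e^{2i}$, so the linear map carrying one basis to the other is the required element of $GL(V)$. The main obstacle is precisely the inductive step — verifying that $W^{\perp_\omega}$ is a complement to $W$ on which $\omega$ remains skew-symmetric with rank dropping by exactly $2$; the rest is bookkeeping. As a by-product the argument shows that the rank is always even, which is consistent with the stated consequence that non-degeneracy forces $\dim V$ to be even.
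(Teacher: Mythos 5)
Your proof is correct, but there is nothing in the paper to compare it against: the statement is an \emph{observation}, asserted as a ``well known fact'' with a citation to the literature, and the paper supplies no argument at all. What you have written is precisely the standard argument that underlies that citation. Both halves check out: the \emph{only if} direction follows because $\flat_1^{g^\ast\omega} = g^\ast \circ \flat_1^{\omega} \circ g$ exhibits the two contraction maps as differing by pre- and post-composition with isomorphisms, so their ranks agree; and the \emph{if} direction is the classical skew-symmetric normal form, where your key computation --- that $u - \omega(u,w)\,v + \omega(u,v)\,w$ lies in $W^{\perp_\omega}$ whenever $\omega(v,w)=1$, giving the splitting $V = W \oplus W^{\perp_\omega}$ with $\omega$-orthogonal summands --- is exactly what makes the induction on $\operatorname{rank}$ go through. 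The trade-off between the two treatments is the obvious one: the paper's citation is the economical choice, since the fact is classical and peripheral to its aims, whereas your argument makes the observation self-contained and, as you note, yields the evenness of the rank as a by-product, which is what justifies the paper's remark that non-degenerate $2$-forms exist only when $\dim V$ is even. You are also right to leave the $k>2$ assertions (non-classifiability of $3$-forms in general, orbit counts) as literature statements; those genuinely belong to the cited reference and are not provable by elementary means.
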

The isomorphism of multisymplectic vector spaces is given by the following definition.
\begin{Def}[Multisymplectomorphism] Let $(V_1, \omega_1),$ $(V_2, \omega_2)$ be multisymplectic vector spaces. A \textbf{multisymplectomorphism} between $(V_1, \omega_1)$ and $(V_2, \omega_2)$ is a linear isomorphism $$f: V_1 \rightarrow V_2$$ satisfying $$f^\ast \omega_2 = \omega_1.$$ 
\end{Def}

\begin{example}\label{ex:canonicalmultisymplectic}Let $L$ be a vector space and take $V := L \oplus \bigwedge^{k+1} L$ with $k \leq \dim V$. Define the $(k +1)$-form $$\Omega_L ((v_1, \alpha_1), \dots, (v_{k+1}, \alpha_{k+1})) := \sum_{j = 1}^{k+1} \alpha_j(v_1, \dots, \hat{v}_j, \dots, v_{k+1}),$$ where $\hat{v}_j$ means that the $j$th-vector is missing. Then, $\Omega_L$ is a regular multisymplectic form and, thus, $(V, \Omega_L)$ is a regular multisymplectic vector space. 
\end{example}

Similar to the notion of orthogonal in symplectic vector spaces, we can define a (now indexed) version in multisymplectic vector spaces.
\begin{Def}[Multisymplectic orthogonal] Let $(V, \omega)$ be a multisymplectic vector space of order $k$, $W \subseteq V$ be a subspace and $1 \leq j \leq k$. Define the ${j}$\textbf{th-orthogonal} to $W$ as the subspace $$W^{\perp, j} = \{v \in V : \,\, \iota_{v \wedge w_1 \wedge \cdots \wedge w_j} \omega = 0, \forall w_1, \dots, w_j \in W\}.$$
\end{Def}
It can be easily proved that the $j$th-orthogonal satisfies the following properties:
\begin{proposition}\label{prop: propertiesoforthogonal} Let $(V, \omega)$ be a multisymplectic vector space of order $k$. Then,
\begin{enumerate}[a)]
    \item $\{0\}^{\perp, j} = V$ for all $1 \leq j \leq k$;
    \item $V^{\perp, 1} = \ker \flat_1$;
    \item $(W_1 + W_2)^{\perp, j} \subseteq W_1^{\perp, j} \cap W_2^{\perp, j},$ for all $1 \leq j \leq k$, and for all subspaces $W_1, W_2 \subseteq V$;
    \item $W_1^{\perp, j} + W_2^{\perp, j} \subseteq (W_1 \cap W_2)^{\perp, j}$ for all $1 \leq j \leq k$, and for all subspaces $W_1, W_2 \subseteq V$;
    \item $(W_1 + W_2)^{\perp, 1} \subseteq W_1^{\perp, 1} \cap W_2^{\perp, 1},$ for all subspaces $W_1, W_2 \subseteq V$.
\end{enumerate}
\end{proposition}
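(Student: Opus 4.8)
The plan is to reduce all five items to two elementary observations: that each $W^{\perp,j}$ is a linear subspace of $V$, and that the assignment $W \mapsto W^{\perp,j}$ is order-reversing with respect to inclusion. First I would record that for fixed $w_1, \dots, w_j$ the map $v \mapsto \iota_{v \wedge w_1 \wedge \cdots \wedge w_j} \omega$ is linear in $v$, so that $W^{\perp,j}$, being the intersection over all admissible tuples $(w_1,\dots,w_j)$ of the kernels of these linear maps, is itself a linear subspace. This is precisely what makes the sums appearing in d) meaningful.

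Next comes the antitone property: if $W \subseteq W'$, then the family of defining conditions for $(W')^{\perp,j}$ contains that for $W^{\perp,j}$ (one simply has more admissible test vectors $w_i$), whence $(W')^{\perp,j} \subseteq W^{\perp,j}$. Applying this with the two inclusions $W_1, W_2 \subseteq W_1 + W_2$ yields c) at once, by intersecting the resulting inclusions $(W_1+W_2)^{\perp,j} \subseteq W_i^{\perp,j}$. Applying it instead to $W_1 \cap W_2 \subseteq W_i$ gives $W_i^{\perp,j} \subseteq (W_1 \cap W_2)^{\perp,j}$ for $i = 1,2$; since the right-hand side is a subspace by the first observation, the sum $W_1^{\perp,j} + W_2^{\perp,j}$ also lies inside it, which is d).

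For a), the only tuple with all entries in $\{0\}$ is the zero tuple, so $v \wedge 0 \wedge \cdots \wedge 0 = 0$ and the defining condition holds vacuously for every $v \in V$, giving $\{0\}^{\perp,j} = V$. For b), I unwind the definition through $\iota_{v \wedge w}\omega = \pm\, \iota_w(\iota_v \omega) = \pm\, \iota_w(\flat_1 v)$, so that $v \in V^{\perp,1}$ if and only if the $k$-form $\flat_1 v$ satisfies $\iota_w(\flat_1 v) = 0$ for all $w \in V$. The step I expect to require the most care is the supporting linear-algebra fact that a $k$-form with $k \geq 1$ that is annihilated by every interior product must itself vanish; this is proved by evaluating on a basis and forces $\flat_1 v = 0$, i.e. $v \in \ker \flat_1$, the reverse inclusion being immediate.

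Finally, e) is the $j = 1$ instance of c). I would moreover remark that for $j = 1$ the contraction $\iota_{v \wedge w}\omega$ is linear in the single test vector $w = w_1 + w_2$, so no cross terms arise and the inclusion can in fact be upgraded to the equality $(W_1 + W_2)^{\perp,1} = W_1^{\perp,1} \cap W_2^{\perp,1}$; the obstruction to the same upgrade for $j \geq 2$ is exactly the appearance of mixed test tuples drawing from both $W_1$ and $W_2$, which are not controlled by membership in $W_1^{\perp,j} \cap W_2^{\perp,j}$.
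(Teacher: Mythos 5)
Your proof is correct, and in fact the paper offers no argument to compare it against: it introduces this proposition with ``It can be easily proved that\ldots'' and states it without proof, so your write-up supplies exactly the details the authors omitted. The reduction of c) and d) to linearity of $v \mapsto \iota_{v \wedge w_1 \wedge \cdots \wedge w_j}\omega$ plus the order-reversing property of $W \mapsto W^{\perp,j}$ is the natural route, and your handling of b) (a $k$-form annihilated by all interior products vanishes, since $\beta(w_1,\dots,w_k) = (\iota_{w_1}\beta)(w_2,\dots,w_k)$) is sound. Your closing remark is also worth keeping: as printed, e) is literally the $j=1$ case of c) and hence redundant, and your observation that for $j=1$ bilinearity upgrades the inclusion to the equality $(W_1+W_2)^{\perp,1} = W_1^{\perp,1} \cap W_2^{\perp,1}$ --- while mixed test tuples obstruct this for $j \geq 2$ --- is almost certainly the statement the authors intended e) to make.
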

The definitions of isotropic, coisotropic, Lagrangian and symplectic generalize as follows:
\begin{Def}[$j$-isotropic, $j$-coisotropic, $j$-Lagrangian, multisymplectic] Let $(V, \omega)$ be a multisymplectic vector space of order $k$. A subspace $W \subseteq V$ will be called
\begin{enumerate}[a)]
    \item $j$\textbf{-isotropic}, if $W \subseteq W^{\perp, j}$;
    \item $j$\textbf{-coisotropic}, if $W^{\perp, j} \subseteq W + \ker \flat_1$;
    \item $j$\textbf{-Lagrangian}, if $W = W^{\perp, j} + \ker \flat_1$;
    \item \textbf{non-degenerate}, if $W \cap W^{\perp, 1} = 0.$
\end{enumerate}
\end{Def}
\begin{obs} Notice that when $\omega$ is regular, $\ker \flat_1 = 0$, and we recover the standard definitions of $j$-isotropic, $j$-coisotropic, and $j$-Lagrangian.
\end{obs}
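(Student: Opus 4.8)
The plan is to observe that for a regular multisymplectic vector space the hypothesis is, by the preceding Remark, literally the statement $\ker\flat_1 = \{0\}$, and then to substitute this vanishing kernel into each of the three defining conditions and verify that the $\ker\flat_1$ term simply drops out, leaving the classical condition.

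First I would make the equivalence explicit. By the definition of regularity, $(V,\omega)$ is regular precisely when $\flat_1$ is a monomorphism, and a linear map is a monomorphism exactly when its kernel is trivial; hence regularity gives $\ker\flat_1 = \{0\}$. This is exactly the content of the Remark stated just after the definition of $\flat_q$, so I would cite it rather than reprove it.

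Next I would treat the three notions in turn, using only the trivial identity $U + \{0\} = U$ valid for any subspace $U \subseteq V$. The $j$-isotropic condition $W \subseteq W^{\perp, j}$ makes no reference to $\ker\flat_1$ at all, so it is already in standard form and requires no modification. For $j$-coisotropic, substituting $\ker\flat_1 = \{0\}$ into $W^{\perp, j} \subseteq W + \ker\flat_1$ yields $W + \{0\} = W$, so the condition collapses to $W^{\perp, j} \subseteq W$, the classical coisotropic condition. Likewise, for $j$-Lagrangian the defining equality $W = W^{\perp, j} + \ker\flat_1$ becomes $W = W^{\perp, j} + \{0\} = W^{\perp, j}$, which is the classical Lagrangian condition.

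Since each step is a direct substitution of $\ker\flat_1 = \{0\}$ followed by the identity $U + \{0\} = U$, there is no genuine obstacle to overcome; the statement is essentially a consistency check that our degenerate-friendly definitions specialize correctly. The only point requiring a little care is to invoke the correct characterisation of regularity, namely $\text{regular} \iff \ker\flat_1 = \{0\}$ (and not the subspace-level non-degeneracy condition $W \cap W^{\perp,1} = 0$, which is a property of $W$ rather than of the ambient form $\omega$).
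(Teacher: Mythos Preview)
Your proposal is correct. The paper treats this observation as self-evident and offers no proof at all; your explicit substitution of $\ker\flat_1=\{0\}$ into each defining condition is exactly the unwinding the reader is expected to perform.
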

\begin{proposition}\label{prop:characterizationisotropic} Let $(V, \omega)$ be a multisymplectic vector space of order $k$. Then, a subspace $i: W \rightarrow V$ ($i$ being the natural inclusion) is $k$-isotropic if and only if $$i^\ast \omega = 0.$$
\end{proposition}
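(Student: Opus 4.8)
The plan is to unwind both conditions and observe that each is literally a statement about the vanishing of the single scalar $\omega(v, w_1, \dots, w_k)$ for vectors drawn from $W$. First I would spell out $k$-isotropy: by definition $W \subseteq W^{\perp, k}$ means that for every $v \in W$ and all $w_1, \dots, w_k \in W$ we have $\iota_{v \wedge w_1 \wedge \cdots \wedge w_k} \omega = 0$. The key degree count to record here is that $v \wedge w_1 \wedge \cdots \wedge w_k \in \bigwedge^{k+1} V$ is a $(k+1)$-vector, so contracting it into the $(k+1)$-form $\omega$ lands in $\bigwedge^0 V^\ast = \mathbb{R}$; concretely, $\iota_{v \wedge w_1 \wedge \cdots \wedge w_k} \omega = \omega(v, w_1, \dots, w_k)$ up to the sign fixed by the contraction convention.

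Next I would recall what $i^\ast \omega = 0$ says. The pullback $i^\ast \omega \in \bigwedge^{k+1} W^\ast$ is the form $(i^\ast \omega)(u_0, \dots, u_k) = \omega(u_0, \dots, u_k)$ for $u_0, \dots, u_k \in W$, and a $(k+1)$-form on $W$ vanishes precisely when it is zero on every $(k+1)$-tuple of vectors of $W$. Hence $i^\ast \omega = 0$ is equivalent to $\omega(u_0, \dots, u_k) = 0$ for all $u_0, \dots, u_k \in W$, which is exactly the family of scalar conditions appearing in the previous paragraph once we identify $u_0$ with $v$ and $u_1, \dots, u_k$ with $w_1, \dots, w_k$.

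With both conditions reduced to this common form, the two implications are immediate. For the forward direction, assuming $W \subseteq W^{\perp, k}$, I would take arbitrary $u_0, \dots, u_k \in W$ and apply the orthogonality condition with $v = u_0$ and $w_j = u_j$ to get $\omega(u_0, \dots, u_k) = 0$, so $i^\ast \omega = 0$. For the converse, assuming $i^\ast \omega = 0$, for any $v, w_1, \dots, w_k \in W$ the scalar $\iota_{v \wedge w_1 \wedge \cdots \wedge w_k} \omega$ equals $\pm \omega(v, w_1, \dots, w_k) = 0$, so $v \in W^{\perp, k}$ and therefore $W \subseteq W^{\perp, k}$.

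This is essentially a bookkeeping argument, so I do not expect a genuine obstacle. The only point demanding care is the contraction convention: one must verify that contracting the top-degree multivector $v \wedge w_1 \wedge \cdots \wedge w_k$ into $\omega$ reproduces the full evaluation $\omega(v, w_1, \dots, w_k)$ up to sign, so that the $k$-orthogonal condition and the pullback condition match term by term. The identity $j = k$ is what makes the degrees align exactly into a scalar; for $j < k$ the analogous contraction would produce a nonzero-degree form and no such clean reformulation is available, which is precisely why the statement is special to the top index $j = k$.
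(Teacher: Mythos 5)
Your proposal is correct and is essentially the paper's own proof: both reduce $k$-isotropy to the vanishing of $\omega$ on all $(k+1)$-tuples of vectors in $W$, which is precisely the condition $i^\ast \omega = 0$. You merely spell out the contraction/pullback bookkeeping that the paper leaves implicit.
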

\begin{proof}$W$ is $k$-isotropic if and only if $$\omega(w_1, \dots, w_{k+1}) = 0,$$ for every $w_1, \dots, w_{k+1} \in W$ or, equivalently, $i^\ast \omega = 0.$
\end{proof}
 \begin{example}\label{ex:verticalforms} Let $L$ be a vector space and $\mathcal{E} \subseteq L$ be a proper subspace. For $r \geq 0$, $k \leq \dim L$, define $$\bigwedge^k_r L^\ast := \left\{\alpha \in \bigwedge^k L^\ast : \,\, \iota_{v_1 \wedge \cdots \wedge v_r} \alpha = 0,\, \forall v_1, \dots, v_r \in \mathcal{E}\right\}.$$Notice that, if $r \leq \dim \mathcal{E}$, for the subspace $\bigwedge^k_r L^\ast$ to be non trivial, we need to ask $k - r + 1 \leq \codim \mathcal{E}.$ Then, under these conditions and for $r \geq 2$, $$L \oplus \bigwedge^k_r L^\ast$$ is a non-degenerate subspace of $(L \oplus \bigwedge^k L^\ast, \Omega_L)$ from \cref{ex:canonicalmultisymplectic} and, consequently, $$\left(L \oplus \bigwedge^k_r L^\ast, i^\ast\Omega_L\right)$$ is a regular multisymplectic vector space, where $i$ is the natural inclusion. 
\end{example}

From now on, we will denote $\Omega_L$ as the multisymplectic form in $L \oplus \bigwedge ^k_r L^\ast,$ making abuse of notation.

\begin{obs} Notice that for $r > \dim \mathcal{E}$, or $\mathcal{E} = 0$, we recover the canonical multisymplectic vector space $L \oplus \bigwedge^k L$. For simplicity, we will refer to this case as $r = 0$. The only degenerate case is for $r = 1$ and we have $$\ker \flat_1 = \mathcal{E}.$$
\end{obs}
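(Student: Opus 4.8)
The plan is to treat the three ranges of $r$ separately, since the observation bundles together assertions of quite different depth. For the recovery claims ($r > \dim\mathcal{E}$ and $\mathcal{E} = 0$) the whole point is that the defining constraint of $\bigwedge^k_r L^\ast$ becomes vacuous; the genuine content is the identity $\ker\flat_1 = \mathcal{E}$ in the case $r = 1$, which I would obtain by a direct contraction computation against the explicit formula for $\Omega_L$.

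First, for $r > \dim\mathcal{E}$: any $r$ vectors $v_1,\dots,v_r\in\mathcal{E}$ are linearly dependent, so $v_1\wedge\cdots\wedge v_r = 0$ in $\bigwedge^r L$ and hence $\iota_{v_1\wedge\cdots\wedge v_r}\alpha = 0$ for every $\alpha$. The defining condition is therefore empty and $\bigwedge^k_r L^\ast = \bigwedge^k L^\ast$; the same reasoning applies when $\mathcal{E}=0$, where the only vector to contract with is $0$. In both situations $L\oplus\bigwedge^k_r L^\ast$ equals $L\oplus\bigwedge^k L^\ast$ with $i=\operatorname{id}$, so the structure is literally the canonical one of \cref{ex:canonicalmultisymplectic}, which is regular.

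For $r=1$ I would compute $\flat_1$ explicitly. Writing a general element of $V = L\oplus\bigwedge^k_1 L^\ast$ as $(v,\alpha)$ and evaluating $\iota_{(v,\alpha)}\Omega_L$ on $((v_2,\alpha_2),\dots,(v_{k+1},\alpha_{k+1}))$ yields, from the defining formula of $\Omega_L$,
$$\alpha(v_2,\dots,v_{k+1}) + \sum_{j=2}^{k+1}\alpha_j(v,v_2,\dots,\hat v_j,\dots,v_{k+1}).$$
Testing with all $\alpha_j=0$ forces $\alpha(v_2,\dots,v_{k+1})=0$ for all $v_i\in L$, hence $\alpha=0$; then, with $\alpha=0$ and a single $\alpha_j\neq 0$, the condition collapses to $\iota_v\alpha_j = 0$ for every $\alpha_j\in\bigwedge^k_1 L^\ast$. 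Thus $(v,\alpha)\in\ker\flat_1$ if and only if $\alpha=0$ and $\iota_v\alpha' = 0$ for all $\alpha'\in\bigwedge^k_1 L^\ast$.

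It then remains to show that $\{v\in L : \iota_v\alpha'=0\ \forall\,\alpha'\in\bigwedge^k_1 L^\ast\} = \mathcal{E}$. The inclusion $\supseteq$ is immediate from the definition of $\bigwedge^k_1 L^\ast$. For $\subseteq$ I would identify $\bigwedge^k_1 L^\ast$ with $\bigwedge^k\operatorname{Ann}(\mathcal{E})$ (the $k$-forms contracting to zero with all of $\mathcal{E}$ are exactly those built from a basis of the annihilator), and, for $v\notin\mathcal{E}$, exhibit a decomposable $\alpha'=\xi_1\wedge\cdots\wedge\xi_k$ with $\xi_i\in\operatorname{Ann}(\mathcal{E})$, $\xi_1(v)\neq 0$ and $\xi_2(v)=\cdots=\xi_k(v)=0$, so that $\iota_v\alpha' = \xi_1(v)\,\xi_2\wedge\cdots\wedge\xi_k\neq 0$. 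This is available precisely because the nontriviality hypothesis $k\leq\codim\mathcal{E}$ gives $\dim\operatorname{Ann}(\mathcal{E})\geq k$, so inside $\operatorname{Ann}(\mathcal{E})$ the hyperplane of functionals vanishing on $v$ still has dimension $\geq k-1$ and can be separated from $\xi_1$. This reverse inclusion $\ker\flat_1\subseteq\mathcal{E}$ is the step I expect to be the main obstacle, being the only place where the dimension constraint is actually used and where a little care with bases of the annihilator is needed; everything else is formal. Finally, combining this computation with the regularity of the $r\geq 2$ case (\cref{ex:verticalforms}) and of the $r=0$ case above confirms that $r=1$ is the unique degenerate value.
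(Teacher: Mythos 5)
Your proposal is correct, and it supplies exactly the verification the paper leaves implicit: the Observation is stated without proof, and your three-way split (vacuity of the defining constraint when $r>\dim\mathcal{E}$ or $\mathcal{E}=0$, a direct contraction computation for $r=1$, and appeal to the already-established regularity of the $r=0$ and $r\geq 2$ cases) is the natural argument behind it. In particular, your key step --- identifying $\bigwedge^k_1 L^\ast$ with $\bigwedge^k\operatorname{Ann}(\mathcal{E})$ and using the nontriviality hypothesis $k\leq\codim\mathcal{E}$ to produce a decomposable form showing that $v\notin\mathcal{E}$ implies $(v,0)\notin\ker\flat_1$ --- is sound and is precisely where that hypothesis from \cref{ex:verticalforms} is actually needed.
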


\begin{remark}\label{hypotheseskr}For the sake of clarity in the exposition, we will assume throughout the rest of this section the hypotheses that make $(L\oplus \bigwedge^k_r L^\ast, \Omega_L)$ a regular multisymplectic vector space. More precisely, we will assume $k \leq \dim L$ and, when $r \neq 0$,
\begin{itemize}
    \item $k - r +1 \leq \codim \mathcal{E}$;
    \item $1 < r \leq \dim \mathcal{E}$.
\end{itemize}
Any further hypotheses will be made explicit in the corresponding results.
\end{remark}

\begin{proposition}[\cite{Ibort1999OnTG}] Identify  both $L$ and $W := \bigwedge^k_r L^\ast$ a subspace of $L \oplus \bigwedge^k_r L ^\ast$. Then $L$ is $k$-Lagrangian, and $W$ $1$-Lagrangian in $$\left( L \oplus \bigwedge^k_r L^\ast, \Omega_L\right).$$ 
\end{proposition}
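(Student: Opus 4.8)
The plan is to compute the two orthogonals $L^{\perp,k}$ and $W^{\perp,1}$ directly from the explicit formula defining $\Omega_L$, and then to feed in the regularity of $\left(L\oplus\bigwedge^k_r L^\ast,\Omega_L\right)$ established in \cref{ex:verticalforms}. Write $V:=L\oplus\bigwedge^k_r L^\ast$ and recall that, under the hypotheses of \cref{hypotheseskr}, the form is regular, so $\ker\flat_1=0$. Consequently the defining conditions $L=L^{\perp,k}+\ker\flat_1$ and $W=W^{\perp,1}+\ker\flat_1$ collapse to the plain equalities $L=L^{\perp,k}$ and $W=W^{\perp,1}$, which is what I would establish.

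For the $k$-Lagrangian claim, I would take an arbitrary $(v,\alpha)\in V$ and evaluate $\Omega_L\big((v,\alpha),(w_1,0),\dots,(w_k,0)\big)$ for $w_1,\dots,w_k\in L$. Since the form-components of the last $k$ entries vanish, every term of the defining sum for $\Omega_L$ dies except the $j=1$ term, leaving exactly $\alpha(w_1,\dots,w_k)$. Hence $(v,\alpha)\in L^{\perp,k}$ if and only if $\alpha$ annihilates every $k$-tuple from $L$, that is, if and only if $\alpha=0$. This yields $L^{\perp,k}=L\oplus\{0\}=L$, as asserted.

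For the $1$-Lagrangian claim I would proceed analogously: take $(v,\alpha)\in V$ and $(0,\beta)\in W$ and compute $\Omega_L\big((v,\alpha),(0,\beta),(u_1,\gamma_1),\dots,(u_{k-1},\gamma_{k-1})\big)$. The zero vector-component in the second slot kills every term of the sum except the one contracting $\beta$, producing $\beta(v,u_1,\dots,u_{k-1})$. Therefore $(v,\alpha)\in W^{\perp,1}$ if and only if $\iota_v\beta=0$ for every $\beta\in\bigwedge^k_r L^\ast$, with $\alpha$ left unconstrained; equivalently $W^{\perp,1}=L_0\oplus W$, where $L_0:=\{v\in L:\ \iota_v\beta=0\ \ \forall\,\beta\in\bigwedge^k_r L^\ast\}$.

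The remaining, and genuinely substantive, step is to show $L_0=0$; this is precisely where the numerical hypotheses $k-r+1\le\codim\mathcal{E}$ and $1<r\le\dim\mathcal{E}$ enter, and I expect it to be the main obstacle. Rather than re-examining these inequalities by hand, I would argue $L_0\subseteq\ker\flat_1$: if $v\in L_0$, then in $\flat_1(v,0)=\iota_{(v,0)}\Omega_L$ the $j=1$ term vanishes because the form-part of $(v,0)$ is zero, while each of the remaining terms has the shape $(\iota_v\gamma)(\cdots)$ with $\gamma\in\bigwedge^k_r L^\ast$, hence is zero; thus $(v,0)\in\ker\flat_1$. Since the space is regular by \cref{ex:verticalforms}, $\ker\flat_1=0$, forcing $L_0=0$ and therefore $W^{\perp,1}=W$. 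The difficulty is thus conceptual rather than computational: one must correctly isolate the leftover $L$-component $L_0$ of $W^{\perp,1}$ and recognize it as a subspace of the kernel that regularity eliminates.
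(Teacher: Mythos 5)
Your proof is correct under the standing hypotheses of \cref{hypotheseskr} (the regular case), and its computational core coincides with the paper's: both evaluate $\Omega_L$ on tuples whose form-parts or vector-parts vanish, obtaining $L^{\perp,k}=L$ and isolating the possible leftover piece of $W^{\perp,1}$ inside $L$, namely $L_0=\{v\in L:\ \iota_v\beta=0\ \forall\,\beta\in\bigwedge^k_r L^\ast\}$. The divergence is in how $L_0$ is killed, and in scope. The paper splits into the cases $r\neq 1$ and $r=1$: for $r\neq 1$ it simply asserts that $\beta(l,l_2,\dots,l_k)=0$ for all $\beta$ forces $l=0$, while for $r=1$ it shows $l\in\mathcal{E}=\ker\flat_1$, so that $W^{\perp,1}=W+\ker\flat_1$ still holds in the degenerate case; this is precisely why the definition of $j$-Lagrangian carries the $+\ker\flat_1$ term, and your collapse to the plain equalities $L=L^{\perp,k}$, $W=W^{\perp,1}$ silently drops that case, so your argument covers strictly less than the paper's. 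Your replacement for the paper's bare assertion --- observing $L_0\oplus\{0\}\subseteq\ker\flat_1$ and invoking the regularity claimed in \cref{ex:verticalforms} --- is a legitimate citation and is arguably more transparent than ``necessarily $l=0$''; but be aware it buys no new ground: a direct computation shows $\ker\flat_1$ is exactly $L_0\oplus\{0\}$, so your reduction is an equivalence, and the regularity statement in \cref{ex:verticalforms} is itself asserted there without proof. In other words, both proofs rest on the same unproved combinatorial fact (that the numerical hypotheses let one build, for each $0\neq v\in L$, a form $\beta\in\bigwedge^k_r L^\ast$ with $\iota_v\beta\neq 0$, which uses $r\geq 2$ exactly when $v\in\mathcal{E}$); yours relocates it into the example, at the cost of losing the degenerate case $r=1$ that the paper's case analysis handles.
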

\begin{proof}  It is clear that $L$ is $k$-isotropic and that $W$ is $1$-isotropic. To see that $L$ is $k$-coisotropic, let $(l, \alpha) \in L \oplus \bigwedge^k_r L^\ast$ such that $$\Omega_L((l, \alpha), (l_1, 0), \dots, (l_k, 0)) = 0,$$ for every $l_1, \dots, l_k \in L,$ that is, $$\alpha(l_1, \dots, l_k) = 0,$$ for every $l_1, \dots, l_k \in L.$ We conclude $\alpha = 0$, and thus, $$L^{\perp, k} = L = L + \ker \flat_1 \footnote{If $r \neq 1, \ker \flat_1 = 0$ and, if $r = 1$, $\ker \flat_1 = \mathcal{E}$. In any case, the equality $L = L + \ker \flat_1$ holds.}$$

\noindent Now, to see that $W$ is $1$-Lagrangian, let $(l, \alpha) \in L \oplus \bigwedge^k_r L^\ast$ such that $$\Omega_L((l, \alpha), (0, \beta_1), (l_2, \beta_2), \dots, (l_k, \beta_k)) = 0,$$ for every $\beta_1, \dots, \beta_k \in \bigwedge^k_r L^\ast$, and $l_2, \dots, l_k \in L.$ Then, $$\beta_1(l, l_2, \dots, l_k) = 0,$$ for every $l_2, \dots, l_k \in L.$ Now we distinguish two cases:
\begin{enumerate}
    \item\underline{Case $r \neq 1$}. Then, necessarily $l = 0$, concluding $$W^{\perp, 1} = W = W + \ker \flat_1,$$ because $\ker \flat_1 = 0.$
    \item \underline{Case r = 1}. If $$\beta_1(l, l_2, \dots, l_k) = 0,$$ for every $l_2, \dots, l_k \in L$, we have $l \in \mathcal{E}$ and, therefore, $$(l, \alpha) \in W + \ker \flat_1,$$ proving that $W$ is $1$-Lagrangian.
\end{enumerate}
\end{proof} 

An important class of multisymplectic vector spaces are those that are multisymplectomorphic to those of \cref{ex:canonicalmultisymplectic} and \cref{ex:verticalforms}. First observe the following:
\begin{proposition}\label{prop: characterization} A non-degenerate multisymplectic vector space $(V, \omega)$ is multisymplectomorphic to the one defined in \cref{ex:verticalforms} if and only if there exists $\mathcal{E}, L, W \subseteq V$ satisfying:
\begin{itemize}
    \item $L$ is $k$-Lagrangian with $\mathcal{E} \subseteq L$;
    \item $W$ is $1$-Lagrangian and, if $e_1, \dots, e_r \in \mathcal{E}$, we have $$\iota_{e_1 \wedge \cdots \wedge e_r} \omega = 0;$$
    \item $V = L \oplus W$ and $$\dim W = \dim \bigwedge ^k_r L^\ast,$$ where the vertical forms are taken with respect to $\mathcal{E}.$
\end{itemize}
\end{proposition}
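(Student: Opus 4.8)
The statement is an equivalence, so I would prove the two implications separately, expecting the forward direction to be a transport-of-structure argument and the reverse one to carry the real content. For the forward direction, suppose $f\colon(V,\omega)\to\big(L_0\oplus\bigwedge^k_r L_0^\ast,\Omega_{L_0}\big)$ is a multisymplectomorphism, where $\mathcal{E}_0\subseteq L_0$ is the distinguished subspace of \cref{ex:verticalforms}. I would set $L:=f^{-1}(L_0)$, $W:=f^{-1}\big(\bigwedge^k_r L_0^\ast\big)$ and $\mathcal{E}:=f^{-1}(\mathcal{E}_0)$. Because $f^\ast\Omega_{L_0}=\omega$, the map $f$ intertwines contractions, so $f(U^{\perp,j})=f(U)^{\perp,j}$ for every subspace $U$; hence $f$ preserves $j$th-orthogonals, the kernel $\ker\flat_1$, direct-sum decompositions and dimensions. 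Since the previous Proposition already shows that $L_0$ is $k$-Lagrangian and $\bigwedge^k_r L_0^\ast$ is $1$-Lagrangian, transporting these facts through $f^{-1}$ yields that $L$ is $k$-Lagrangian, $W$ is $1$-Lagrangian, $\mathcal{E}\subseteq L$, $V=L\oplus W$ and $\dim W=\dim\bigwedge^k_r L^\ast$. The vanishing $\iota_{e_1\wedge\cdots\wedge e_r}\omega=0$ for $e_i\in\mathcal{E}$ follows from $\iota_{e_1\wedge\cdots\wedge e_r}\omega=f^\ast\big(\iota_{f(e_1)\wedge\cdots\wedge f(e_r)}\Omega_{L_0}\big)$ together with the analogous property of the model, which I would record as a one-line computation from the definition of $\Omega_{L_0}$: the terms with a vanishing form-component drop out, and the surviving ones vanish because each $\alpha_j$ is a vertical form and the $f(e_i)$ lie in $\mathcal{E}_0$.

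For the converse I would take $\mathcal{E},L,W$ as given and build a multisymplectomorphism onto the model attached to $L$ and $\mathcal{E}$. Since $(V,\omega)$ is non-degenerate, $\ker\flat_1=0$, so the hypotheses collapse to $L=L^{\perp,k}$ and $W=W^{\perp,1}$; in particular $L$ is $k$-isotropic and $W$ is $1$-isotropic. The heart of the argument is the contraction map
$$\phi\colon W\longrightarrow\bigwedge\nolimits^{k} L^\ast,\qquad \phi(w):=(\iota_w\omega)\big|_{L},$$
the restriction to $L$ of the $k$-form $\iota_w\omega$. First I would check that $\phi$ lands in the vertical forms $\bigwedge^k_r L^\ast$: for $e_1,\dots,e_r\in\mathcal{E}\subseteq L$ one has $\iota_{e_1\wedge\cdots\wedge e_r}\phi(w)=(-1)^r\,\omega(e_1,\dots,e_r,w,\,\cdots)=(-1)^r\big(\iota_{e_1\wedge\cdots\wedge e_r}\omega\big)(w,\,\cdots)=0$ by hypothesis.

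Next I would prove $\phi$ injective. If $\phi(w)=0$, then $\iota_w\omega$ vanishes on all $k$-tuples from $L$; and since $W$ is $1$-isotropic, $\iota_{w\wedge w'}\omega=0$ for $w'\in W$, so $\iota_w\omega$ vanishes whenever one argument lies in $W$. Expanding an arbitrary argument along $V=L\oplus W$ gives $\iota_w\omega=0$, i.e. $w\in\ker\flat_1=0$. Combined with the dimension hypothesis $\dim W=\dim\bigwedge^k_r L^\ast$, injectivity upgrades $\phi$ to an isomorphism $\phi\colon W\xrightarrow{\ \sim\ }\bigwedge^k_r L^\ast$. I would then define the linear isomorphism $f\colon V\to L\oplus\bigwedge^k_r L^\ast$ by $f(l+w)=(l,\phi(w))$ and verify $f^\ast\Omega_L=\omega$ by expanding $\omega(l_1+w_1,\dots,l_{k+1}+w_{k+1})$ multilinearly: the purely-$L$ term vanishes because $L$ is $k$-isotropic (\cref{prop:characterizationisotropic}), every term with at least two $W$-factors vanishes because $W$ is $1$-isotropic, and the remaining terms — those with exactly one $W$-entry — reassemble, after reordering with the signs in the definition of $\Omega_L$, into $\sum_j\phi(w_j)(l_1,\dots,\hat l_j,\dots,l_{k+1})=\Omega_L\big(f(v_1),\dots,f(v_{k+1})\big)$.

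The main obstacle I anticipate is establishing that $\phi$ is a well-defined isomorphism onto the vertical forms: well-definedness rests squarely on the contraction hypothesis on $\mathcal{E}$ (and on $\mathcal{E}\subseteq L$), while injectivity requires feeding in all three conditions — the two Lagrangian/isotropy properties together with $\ker\flat_1=0$ — in the right combination, after which surjectivity is forced only by the dimension count. The verification $f^\ast\Omega_L=\omega$ is then a bookkeeping computation whose only subtlety is matching the sign convention in $\Omega_L$ against the signs produced by reordering the one-$W$-factor terms.
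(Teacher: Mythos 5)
Your proof is correct and follows essentially the same route as the paper's: the key map is the same contraction isomorphism $\phi(w)=(\iota_w\omega)|_L$ onto $\bigwedge^k_r L^\ast$, extended by the identity on $L$, followed by the same multilinear expansion verifying $f^\ast\Omega_L=\omega$. The only differences are that you spell out the well-definedness, injectivity, and dimension-count steps that the paper dismisses as ``clear,'' and you also write out the routine transport-of-structure argument for the forward implication, which the paper omits entirely.
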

\begin{proof} It is clear the hypothesis imply that the following linear map $$\phi: W \rightarrow \bigwedge^k_r L^\ast; \,\, \alpha \mapsto (\iota_\alpha \omega) |_L$$ defines a linear isomorphism. Now, let $\Phi$ be the isomorphism given by $$\Phi:= id_L \oplus \phi: V = L \oplus W \rightarrow L \oplus \bigwedge^k_r L^\ast.$$ We have $\Phi^\ast \Omega_L = \omega.$ Indeed,
\begin{align*}
    &(\Phi^\ast \Omega_L) (l_1 + \alpha_1, \dots, l_{k +1}+ \alpha_{k +1}) = \Omega_L((l_1, \phi(\alpha_1), \dots, (l_{k+1}, \phi(\alpha_{k+1}))) = \\
    &= \sum_{j = 1}^{k+1} (-1)^{j+1} (\phi(\alpha_j))(l_1, \dots, \hat{l}_j, \dots, l_{k+1})= \sum_{j= 1}^{k+1} (-1)^{j+1}\omega( \alpha_j, l_1, \dots, \hat{l}_j, \dots, l_{k+1}) \\
    &= \sum_{j= 1}^{k+1} \omega( l_1, \dots, l_{j-1}, \alpha_j, l_{j+1} \dots, l_{k+1}) = \omega(l_1 + \alpha_1, \dots, l_{k +1}+ \alpha_{k +1}),
\end{align*}
proving the result.
\end{proof}
We can prove a weaker version of \cref{prop: characterization}. Indeed, given $\mathcal{E}$, $L$, $W$ satisfying the hypotheses, we can canonically identify $\mathcal{E}$ as a subspace of $V/W$ via the isomorphism $$V/W \cong L.$$ It is easily verified that $$\iota_{ e_1 \wedge \cdots \wedge e_r} \omega = 0,$$ for all $e_1, \dots, e_r \in \mathcal{E}$ is equivalent to $$\iota_{v_1 \cdots \wedge v_r} \omega = 0,$$ for all $v_1, \dots, v_r \in V$satisfying $\pi(v_i) \in \mathcal{E}$ (identifying $\mathcal{E}$ as a subspace of $V/W$), for every $1 \leq i \leq r.$ We have the following:
\begin{theorem}[\cite{deleon2003tulczyjews}]\label{thm:type(kr)} A non-degenerate multisymplectic vector space $(V, \omega)$ is multisymplectomorphic to $(L \oplus \bigwedge^k_r L^\ast, \Omega_L)$ if and only if there exists $ W \subseteq V$ and $\mathcal{E} \subseteq V/W$ satisfying:
\begin{itemize}
    \item $W$ is $1$-Lagrangian and, for all $v_1, \dots, v_r \in V$ with $\pi(v_i) \in \mathcal{E}$ (with $\pi: V \rightarrow V/W$ the canonical projection), we have $$\iota_{ v_1 \wedge \cdots \wedge v_r} \omega = 0;$$
    \item There is an equality of dimensions $$\dim W = \dim \bigwedge ^k_r V/W,$$ where the vertical forms are taken with respect to $\mathcal{E}.$
\end{itemize}
\end{theorem}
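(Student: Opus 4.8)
The plan is to prove the two implications separately, reducing the substantive (``if'') direction to \cref{prop: characterization}. For the ``only if'' direction, suppose $\Psi\colon (V,\omega)\to (L\oplus\bigwedge^k_r L^\ast,\Omega_L)$ is a multisymplectomorphism. I would set $W:=\Psi^{-1}(\{0\}\oplus\bigwedge^k_r L^\ast)$ and let $\mathcal{E}\subseteq V/W$ be the image of the distinguished subspace of $L$ under the identification $V/W\cong L$ induced by $\Psi$. Since $\Psi$ carries $\omega$ to $\Omega_L$, it transports the $1$-Lagrangian subspace $\{0\}\oplus\bigwedge^k_r L^\ast$ and the vertical condition of \cref{ex:verticalforms} to the required properties of $W$ and $\mathcal{E}$, and the dimension equality is immediate. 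This direction is a routine transport of structure.

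For the ``if'' direction the idea is to manufacture, from the data $(W,\mathcal{E})$, a $k$-Lagrangian complement $L$ of $W$ carrying a lift of $\mathcal{E}$, so that the hypotheses of \cref{prop: characterization} are met. First I would fix an arbitrary complement $L_0$ of $W$ and a lift $\widetilde{\mathcal{E}}\subseteq L_0$ of $\mathcal{E}$ under $\pi|_{L_0}\colon L_0\xrightarrow{\sim} V/W$. Because $W$ is $1$-Lagrangian (hence $1$-isotropic) and $\ker\flat_1=0$, contracting $\omega$ with two vectors of $W$ vanishes, so $\omega$ splits as $\omega = i_{L_0}^\ast\omega + \omega_{k,1}$, where $\omega_{k,1}$ collects the terms with exactly one $W$-argument. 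The pairing part defines $\phi_0\colon W\to\bigwedge^k L_0^\ast,\ w\mapsto(\iota_w\omega)|_{L_0}$. The same two facts force $\phi_0$ to be injective (if $\phi_0(w)=0$ then $\iota_w\omega$ vanishes on all of $V$, so $w\in\ker\flat_1=0$), while the vertical hypothesis on $\omega$ places its image in $\bigwedge^k_r L_0^\ast$; the dimension equality then upgrades $\phi_0$ to an isomorphism $W\xrightarrow{\sim}\bigwedge^k_r L_0^\ast$.

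The heart of the argument is to correct $L_0$ by a shear $S\colon L_0\to W$ so that $L:=\{l+Sl:l\in L_0\}$ becomes $k$-isotropic. Expanding $i_L^\ast\omega$ and discarding the terms with at least two $W$-arguments gives $i_L^\ast\omega = \eta + \delta(\phi_0\circ S)$, where $\eta:=i_{L_0}^\ast\omega$ and $\delta$ denotes the antisymmetrisation $(\delta T)(l_1,\dots,l_{k+1})=\sum_i(-1)^{i+1}(Tl_i)(l_1,\dots,\hat l_i,\dots,l_{k+1})$. The key linear-algebra observation is that $\eta$ is vertical with respect to $\widetilde{\mathcal{E}}$ (being a restriction of $\omega$), and that the assignment $l\mapsto\iota_l\eta$ again lands in $\bigwedge^k_r L_0^\ast$ — this uses only that interior products anticommute, so $\iota_{e_1\wedge\cdots\wedge e_r}\iota_l\eta=(-1)^r\iota_l\iota_{e_1\wedge\cdots\wedge e_r}\eta=0$ — and satisfies $\delta(l\mapsto\iota_l\eta)=(k+1)\eta$. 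Hence $\eta$ lies in the image of $\delta$ restricted to maps $L_0\to\bigwedge^k_r L_0^\ast$, and since $\phi_0$ is onto $\bigwedge^k_r L_0^\ast$ I can solve $\phi_0\circ S=-\tfrac{1}{k+1}(l\mapsto\iota_l\eta)$ for $S$. With this $S$ one gets $i_L^\ast\omega=0$, i.e.\ $L$ is $k$-isotropic, and $V=L\oplus W$.

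It remains to promote $k$-isotropy to $k$-Lagrangianity and to invoke \cref{prop: characterization}. Writing $v=l+w\in L^{\perp,k}$ with $l\in L$, $w\in W$ and using $L\subseteq L^{\perp,k}$, I would deduce $(\iota_w\omega)|_L=0$; the injectivity of the analogous pairing $W\to\bigwedge^k L^\ast$ (same argument as for $\phi_0$) forces $w=0$, so $L^{\perp,k}\subseteq L$ and $L$ is $k$-Lagrangian. Finally, setting $\mathcal{E}_L:=L\cap\pi^{-1}(\mathcal{E})$ yields a lift of $\mathcal{E}$ inside $L$ with $\iota_{e_1\wedge\cdots\wedge e_r}\omega=0$ for $e_i\in\mathcal{E}_L$ (by the hypothesis, as $\pi(e_i)\in\mathcal{E}$), and $\dim W=\dim\bigwedge^k_r V/W=\dim\bigwedge^k_r L^\ast$. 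Thus $(L,W,\mathcal{E}_L)$ satisfy the hypotheses of \cref{prop: characterization}, which produces the desired multisymplectomorphism. I expect the main obstacle to be the shear step: identifying the correction term as $\delta(\phi_0\circ S)$ with the correct signs and verifying that every vertical $(k+1)$-form is a $\delta$-image of vertical $k$-forms, which is exactly what makes the $k$-isotropic complement exist within the vertical constraints.
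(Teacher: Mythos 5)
Your proof is correct and follows essentially the same route as the paper's: your shear $S$ solving $\phi_0\circ S=-\tfrac{1}{k+1}\,(l\mapsto\iota_l\eta)$ is exactly the paper's construction of the $k$-isotropic complement as $\Phi^{-1}\circ\mathbf{A}(L)$ with $\pi^\ast A(l)=-\tfrac{1}{k+1}\,\iota_l\omega$, after which both arguments conclude via \cref{prop: characterization}. The only cosmetic differences are that you prove $k$-Lagrangianity of the graph directly instead of citing \cref{isotropicdecomposition}, and you spell out the routine ``only if'' direction, which the paper leaves implicit.
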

To prove it, we will need the following Proposition, which will also be useful in the sequel:

\begin{proposition}\label{isotropicdecomposition} Let $(V, \omega)$ be a multisymplectic vector space, and $U$, $W$ be $k$-isotropic, and $1$-isotropic subspaces respectively such that
$$V = U \oplus W.$$
Then, $U$ is $k$-Lagrangian, and $W$ is $1$-Lagrangian.
\end{proposition}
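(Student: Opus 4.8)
The plan is to use the splitting $V = U \oplus W$ to show that $\omega$ only ever pairs a single vector of $W$ against $k$ vectors of $U$, and then to read off both conclusions from this. Since $U$ is assumed $k$-isotropic and $W$ is assumed $1$-isotropic, to conclude that $U$ is $k$-Lagrangian and $W$ is $1$-Lagrangian it remains only to establish the two coisotropic inclusions $U^{\perp, k} \subseteq U + \ker \flat_1$ and $W^{\perp, 1} \subseteq W + \ker \flat_1$. The two structural inputs I would record first are the following: by \cref{prop:characterizationisotropic}, $k$-isotropy of $U$ means $\omega(u_0, \dots, u_k) = 0$ whenever all arguments lie in $U$; and $1$-isotropy of $W$, which unwinds to $\iota_{w \wedge w'}\omega = 0$ for all $w, w' \in W$, means by antisymmetry of $\omega$ that $\omega$ vanishes on every $(k+1)$-tuple having at least two entries in $W$.

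First I would prove that $U$ is $k$-coisotropic. Let $v \in U^{\perp, k}$ and split $v = u + w$ with $u \in U$ and $w \in W$. For all $u_1, \dots, u_k \in U$ the defining condition gives $0 = \omega(v, u_1, \dots, u_k) = \omega(u, u_1, \dots, u_k) + \omega(w, u_1, \dots, u_k)$, and the first summand vanishes by $k$-isotropy of $U$, so $\omega(w, u_1, \dots, u_k) = 0$ for all $u_i \in U$. I then claim $w \in \ker \flat_1$. To check $\iota_w \omega = 0$ I would evaluate $\omega(w, v_1, \dots, v_k)$ on arbitrary $v_i = u_i + w_i$ and expand multilinearly: every term containing some $w_i$ has at least two entries in $W$ and so vanishes by $1$-isotropy of $W$, while the unique all-$U$ term equals $\omega(w, u_1, \dots, u_k) = 0$. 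Hence $w \in \ker \flat_1$ and $v = u + w \in U + \ker \flat_1$.

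The argument for $W$ is the mirror image. For $v = u + w \in W^{\perp, 1}$, the condition $\iota_{v \wedge w_1}\omega = 0$ for all $w_1 \in W$ reads $\omega(v, w_1, x_1, \dots, x_{k-1}) = 0$ for all $x_i \in V$; discarding the term $\omega(w, w_1, \dots)$, which vanishes by $1$-isotropy, this reduces to $\omega(u, w_1, x_1, \dots, x_{k-1}) = 0$ for all $w_1 \in W$ and $x_i \in V$. To see $u \in \ker \flat_1$ I would evaluate $\omega(u, y_1, \dots, y_k)$ on arbitrary $y_i = u_i + w_i$ and expand: the all-$U$ term vanishes by $k$-isotropy of $U$, terms with two or more of the $w_i$ vanish by $1$-isotropy of $W$, and a term with exactly one $w_i$ is, after reordering, of the form $\pm\,\omega(u, w_i, u_1', \dots, u_{k-1}')$ with the $u_j' \in U$, hence zero by the reduced identity. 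Thus $u \in \ker \flat_1$ and $v \in W + \ker \flat_1$, establishing $1$-coisotropy.

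I expect the only real difficulty to be bookkeeping: tracking signs and which slots fall in $U$ versus $W$ while expanding the multilinear forms, and applying the ``at most one entry in $W$'' reduction consistently. The conceptual crux is already visible, namely that $1$-isotropy of $W$ annihilates every term with two or more slots in $W$ while $k$-isotropy of $U$ annihilates the all-$U$ term, so that the surviving contractions are forced into $\ker \flat_1$; once this is set up the two halves of the statement are dual to one another and the proof closes.
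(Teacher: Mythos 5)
Your proposal is correct and takes essentially the same route as the paper's own proof: both split $v = u + w$ along $V = U \oplus W$, use $k$-isotropy of $U$ and $1$-isotropy of $W$ to annihilate all terms of the multilinear expansion except the residual one, and thereby force that residual component into $\ker \flat_1$. In particular, your ``reduced identity'' $\iota_{u \wedge w_1}\omega = 0$ for all $w_1 \in W$ is exactly the intermediate step the paper establishes before proving the $1$-Lagrangian half, so the two arguments coincide step for step.
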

\begin{proof} We need to prove that $$U^{\perp, k} = U + \ker \flat_1.$$ Let $u + w \in U^{\perp, k},$ for $u \in U$, $w \in W$. Then, for all $u_1, \dots, u_k \in U$ we have $$\omega(u + w, u_1, \dots, u_k) = \omega(w, u_1, \dots, u_k) = 0,$$ where we have used that $U$ is $k$-isotropic. We claim that $w \in \ker \flat_1.$ Indeed, given $v_i \in V$, for $i = 1, \dots, k$, we can write $v_i = u_i + w_i$, with $u_i \in U$, $w_i \in W$. Then, $$\omega(w, v_1, \dots, v_k) = \omega(w, u_1 + w_1, \dots, u_k +w_k) = \omega(w, u_1, \dots, u_k) = 0,$$ where in the last equality we used that $W$ is $1$-isotropic. Therefore, if $u + w \in U^{\perp, k},$ we have $$u + w \in U +\ker \flat_1,$$ that is $$U^{\perp, k} \subseteq U + \ker \flat_1,$$ proving that $U$ is $k$-coisotropic and, therefore, $k$-Lagrangian.\\

\noindent To show that $W$ is $1$-Lagrangian, let $u + w \in W^{1, \perp}$, with $u \in U$, $w \in W$. Then $u \in \ker \flat_1.$ Let $v_i = u_i + + w_i$, $u_i \in U$, $w_i \in W$, $1 \leq i \leq k$. Since $W$ is $1$-isotropic, for every $1 \leq i \leq k$ $$\iota_{u \wedge w_i} \omega = \iota_{(u + w) \wedge w_i} \omega = 0.$$ Now, using that $U$ is $k$-isotropic, and $W$ is $1$-isotropic,
$$\omega(u, u_1 + w_1, \dots, u_k + w_k) = \sum_{j = 1}^{k} \omega(u, u_1,\dots, u_{j-1}, w_j,u_{j+1}, \dots, u_k) = 0.$$
Therefore, $u \in \ker \flat_1$ and $u + w \in W + \ker \flat_1,$ showing that $$W^{\perp, 1} = W + \ker \flat_1,$$ ending the proof.
\end{proof}

\begin{proof}[Proof (of \cref{thm:type(kr)})] The proof we give mimics the case $r = 0$ from \cite{Sevestre_2019}. It is enough to show the existence of a $k$-Lagrangian complement to $W$. By \cref{isotropicdecomposition}, we will conclude the proof once we show that there exists a $k$-isotropic complement.\\

\noindent First observe that, since $W$ is $1$-Lagrangian, $\iota_\alpha \omega$ induces a form on $V/W$ defining $$\phi(\alpha)(\pi(v_1), \dots, \pi(v_{k})) := \omega(\alpha, v_1, \dots, v_k),$$ for every $\alpha \in W$. This map defines a linear isomorphism $$\phi: W \rightarrow \bigwedge^k_r (V/W)^\ast,$$ where the vertical forms are taken with respect to $\mathcal{E}.$ Take $L$ any complement to $W$ in $V$ and define the linear isomorphism $$\Phi:= id_L \oplus \phi: V = L \oplus W \rightarrow L \oplus \bigwedge^k_r (V/W)^\ast.$$ We will look for subspaces of the form $\Phi^{-1} \circ \mathbf{A} (L),$ where $\mathbf{A} = id_L \oplus A,$ with $$A: L\rightarrow \bigwedge^k_r(V/W)^\ast.$$
For this subspace to be $k$-isotropic, it has to satisfy $$\omega(\Phi^{-1} \circ \mathbf{A} (l_1), \dots, \Phi^{-1} \circ \mathbf{A}(l_{k+1})) = 0,$$ for all $l_1, \dots, l_{k+1} \in L.$ We have 
\begin{align*}
    \omega(\Phi^{-1} \circ \mathbf{A} (l_1), \dots, \Phi^{-1} \circ \mathbf{A}(l_{k+1})) = \omega(l_1 + \Phi^{-1} A ( l_1), \dots, l_{k+1} + \Phi^{-1} A ( l_{k+1})) = \\
    \omega(l_1, \dots, l_{k+1}) + \sum_{j= 1}^{k+1}(-1)^{j+1} \omega(\Phi^{-1}A(l_j), l_1, \dots, \hat{l}_j, \dots, l_{k+1})\\
    =  \omega(l_1, \dots, l_{k+1}) + \sum_{j= 1}^{k+1}(-1)^{j+1} (A(l_j)) (\pi(l_1), \dots, \hat{l}_j, \dots,  \pi(l_{k+1})).
\end{align*}
Notice that the projection $\pi$ restricted to $L$ defines an isomorphism $$\pi|_L: L \rightarrow V/W.$$
Define $A$ closing the following diagram\footnote{Notice that these functions are well defined. Indeed, $\iota_l \omega \in \bigwedge^k_r L ^\ast$ for any $l \in L$.}
\[\begin{tikzcd}
	L && {\bigwedge^k_rL^\ast} \\
	&& {\bigwedge^k_r (V/W)^\ast}
	\arrow["\psi", from=1-1, to=1-3]
	\arrow["A"', curve={height=12pt}, from=1-1, to=2-3]
	\arrow["{\pi^\ast}"', from=2-3, to=1-3]
\end{tikzcd},\] where $$\psi(l) := -\frac{\iota_l \omega}{k+1}.$$Then,
\begin{align*}
    &\sum_{j= 1}^{k+1}(-1)^{j+1} (A(l_j)) (\pi(l_1), \dots, \hat{l}_j, \dots,  \pi(l_{k+1}) = \sum_{j= 1}^{k+1}  (-1)^{j+1}(\pi^\ast A(l_j))(l_1, \dots, \hat{l}_j, \dots, l_{k+1})= \\
    &\sum_{j= 1}^{k+1} \frac{(-1)^{j}}{ k+1} \omega(l_j,l_1, \dots, \hat{l}_j, \dots, l_{k+1}) = - \omega(l_1, \dots, l_{k+1}),
\end{align*}
concluding $$\omega(\Phi^{-1} \circ \mathbf{A} (l_1), \dots, \Phi^{-1} \circ \mathbf{A}(l_{k+1})) = 0,$$ and proving the result.
\end{proof}

This induces the following definition:
\begin{Def}[Multisymplectic vector space of type $(k,r)$] A \textbf{multisymplectic vector space of type $(k,r)$} is a tuple $(V,\omega, W, \mathcal{E})$ satisfying the hypothesis of \cref{thm:type(kr)}.
\end{Def}

In later considerations, the next lemma will be very useful:

\begin{lemma}\label{lemma1} Let $(V, \omega, W, \mathcal{E})$ be a multisymplectic vector space of type $(k,r)$. Then, denoting by $\flat_1$ the induced map $$V \xrightarrow{\flat_1} \bigwedge^k V^\ast,$$ we have $$\bigwedge^k_{1,r} V^\ast \subseteq \flat_{1}( V),$$ where $$\bigwedge^k_{1, r} V^\ast  = \bigwedge ^k_1 V^\ast \cap \bigwedge^k_r V^\ast, $$ and the vertical forms are taken with respect to $W$ and $\mathcal{E}$\footnote{This latter meaning that $\iota_{e_1 \wedge \cdots \wedge e_r} \alpha = 0$, for every $e_1, \dots,e_r \in V$ with $\pi(e_i) \in \mathcal{E}$, where $\pi: V \rightarrow V/W$ is the canonical projection.}, respectively.
\end{lemma}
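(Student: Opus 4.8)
The plan is to prove the sharper statement $\flat_1(W) = \bigwedge^k_{1,r} V^\ast$, from which the claimed inclusion $\bigwedge^k_{1,r}V^\ast \subseteq \flat_1(V)$ is immediate since $W \subseteq V$. The whole argument runs through the isomorphism $\phi: W \to \bigwedge^k_r (V/W)^\ast$ built in the proof of \cref{thm:type(kr)}, namely $\phi(\alpha)(\pi(v_1), \dots, \pi(v_k)) = \omega(\alpha, v_1, \dots, v_k)$, where $\pi: V \to V/W$ is the canonical projection.

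First I would pin down the target space. Since $\pi^\ast: \bigwedge^k(V/W)^\ast \to \bigwedge^k V^\ast$ is injective with image exactly the $k$-forms killed by contraction with every $w \in W$, we have $\pi^\ast\big(\bigwedge^k (V/W)^\ast\big) = \bigwedge^k_1 V^\ast$ (vertical with respect to $W$). Restricting to $r$-vertical forms with respect to $\mathcal{E}$, and noting that the condition $\iota_{e_1 \wedge \cdots \wedge e_r}\beta = 0$ for $\pi(e_i) \in \mathcal{E}$ is precisely the pullback of the analogous condition on $V/W$, I obtain $\pi^\ast\big(\bigwedge^k_r (V/W)^\ast\big) = \bigwedge^k_{1,r} V^\ast$.

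The key step is the identity $\iota_\alpha\omega = \pi^\ast(\phi(\alpha))$ for every $\alpha \in W$. To establish it, I would first check $\iota_\alpha\omega \in \bigwedge^k_1 V^\ast$: for any $w \in W$ one has $\iota_w \iota_\alpha\omega = \pm\,\iota_{w \wedge \alpha}\omega = 0$, since $W$ is $1$-isotropic (being $1$-Lagrangian) so that $\iota_{w \wedge \alpha}\omega = 0$ for $w, \alpha \in W$. Hence $\iota_\alpha\omega$ is the pullback of a unique form on $V/W$, and evaluating on projected vectors gives $(\iota_\alpha\omega)(v_1, \dots, v_k) = \omega(\alpha, v_1, \dots, v_k) = \phi(\alpha)(\pi(v_1), \dots, \pi(v_k))$, identifying that form as $\phi(\alpha)$. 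Because $\phi$ is an isomorphism onto $\bigwedge^k_r(V/W)^\ast$, applying $\flat_1$ to $W$ yields $\flat_1(W) = \pi^\ast(\phi(W)) = \pi^\ast\big(\bigwedge^k_r(V/W)^\ast\big) = \bigwedge^k_{1,r}V^\ast$, which proves and sharpens the claim.

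The only real subtlety—more bookkeeping than genuine obstacle—is keeping the two verticality conditions apart ($1$ with respect to $W$ versus $r$ with respect to $\mathcal{E}$) and confirming that under $\pi^\ast$ they correspond to the single space $\bigwedge^k_r(V/W)^\ast$ serving as the codomain of $\phi$; the undetermined sign in $\iota_w\iota_\alpha\omega = \pm\,\iota_{w\wedge\alpha}\omega$ is harmless as the right-hand side vanishes. Alternatively, one could reduce to the model $(L \oplus \bigwedge^k_r L^\ast, \Omega_L)$ through \cref{thm:type(kr)}—noting that $\flat_1$ and all verticality conditions are intertwined by the multisymplectomorphism—and compute directly that $\iota_{(0,\alpha_0)}\Omega_L$ is the pullback to $V$ of $\alpha_0 \in \bigwedge^k_r L^\ast$; the intrinsic argument above, however, avoids re-deriving the model and is shorter.
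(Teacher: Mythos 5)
Your proof is correct, and it takes a recognizably different route from the paper's, although both ultimately rest on \cref{thm:type(kr)}. The paper's proof is model-based: it invokes \cref{thm:type(kr)} to reduce to the canonical case $V = L \oplus \bigwedge^k_r L^\ast$, $W = \bigwedge^k_r L^\ast$, observes that any $\alpha \in \bigwedge^k_{1,r}V^\ast$ is the pullback of some $\widetilde{\alpha} \in \bigwedge^k_r L^\ast$, and checks by an elementary computation that $\iota_{\widetilde{\alpha}}\Omega_L = \alpha$ — i.e.\ exactly the alternative you sketch and set aside in your last paragraph. You instead work intrinsically, using the map $\phi: W \to \bigwedge^k_r (V/W)^\ast$ constructed in the proof of \cref{thm:type(kr)} (whose bijectivity uses the dimension hypothesis in the definition of type $(k,r)$, so you are citing proof ingredients rather than just the statement), and your key identity $\iota_\alpha\omega = \pi^\ast(\phi(\alpha))$ is the coordinate-free form of the paper's ``elementary calculation.'' What your version buys: the sharper conclusion $\flat_1(W) = \bigwedge^k_{1,r}V^\ast$ — identifying the $1$-Lagrangian subspace $W$ as a preimage, not merely establishing the inclusion into $\flat_1(V)$ — and it sidesteps the small but real bookkeeping of checking that the multisymplectomorphism of \cref{thm:type(kr)} intertwines $\flat_1$ and both verticality conditions, which the paper leaves implicit. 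What the paper's version buys is brevity: once \cref{thm:type(kr)} is taken as a black box, the model computation is two lines. All the individual steps you give — that $\pi^\ast$ identifies $\bigwedge^k_r(V/W)^\ast$ with $\bigwedge^k_{1,r}V^\ast$, that $\iota_\alpha\omega$ is $W$-horizontal because $W = W^{\perp,1}$ in the non-degenerate setting, and the evaluation identifying the descended form as $\phi(\alpha)$ — check out.
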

\begin{proof} By \cref{thm:type(kr)}, it is enough to prove it in the canonical case $V = L \oplus \bigwedge_r^k L^\ast, W = \bigwedge^k_r L^\ast.$ Then, any $k$-form $\alpha \in \bigwedge^k_{1,r} V$ is the pull-back of a $k$-form $\widetilde \alpha \in \bigwedge^k_r L^\ast$. An elementary calculation proves that $$\iota_{\widetilde \alpha} \Omega_L = \alpha.$$
\end{proof}

\subsection{Multisymplectic manifolds}
For an introduction to the  study of general (non-degenerate) multisymplectic manifolds motivating their study from Classical Field Theory see \cite{Ibort1999OnTG,Invitation2019}. Also, for an in-depth treatment of the multisymplectic formulation of Classical Field Theory, see \cite{ClassicalFieldsSniaiycki,silvia}.
\begin{Def}[Multisymplectic manifold] A \textbf{multisymplectic manifold} of order $k$ is a pair $(M, \omega)$, where $M$ is a manifold, and $\omega$ is a \textbf{multisymplectic form} of order $k$, that is, a closed $(k +1)$-form. $(M, \omega)$ will be called \textbf{non-degenerate} if $\omega_x$ is non-degenerate for every $x \in M$.
\end{Def}

The $j$th-orthogonal complement defined in \cref{Subsection:MultisymplecticVectorSpapces} and the notion of $j$-isotropic, $j$-coisotropic, $j$-Lagrangian and regular subspaces generalizes to distributions $\Delta$, and submanifolds $N$, defining it in each subspace $\Delta_x$, or in each tangent space $T_x N.$

\begin{example} \label{ex:formsmanifold}We can generalize \cref{ex:canonicalmultisymplectic} to manifolds. Fix a manifold $L$ and define $$M:= \bigwedge^k L,$$ the bundle of $k$-forms. We have the tautological $k$-form $$\Theta^k_L|_{\alpha_x}(v_1, \dots, v_k):= \alpha_x(\pi_\ast v_1, \dots, \pi_\ast v_k),$$ where $\pi: \bigwedge^k L \rightarrow L$ is the natural projection. Define $$\Omega^k_L := - d\Theta^k_L.$$ Then $(\bigwedge^k L, \Omega^k_L)$ is a non-degenerate multisymplectic manifold of order $k$. It is easy to check (see \cref{lemma:completelift}) that $\Theta^k_L$ and $\Omega^k_L$ are the only forms on $\bigwedge^k L$ satisfying $$\alpha^\ast \Theta^k_L = \alpha, \,\, \alpha^\ast \Omega^k_L = - d \alpha,$$ for every $k$-form (interpreted as a section) $\alpha: L \rightarrow \bigwedge^k L$.
\end{example}

 In canonical coordinates $(x^i, p_{i_1, \dots, i_k})$ on $\bigwedge^k L$, we have $$\Theta^k_L = p_{i_1, \dots, i_k} d x^{i_1} \wedge \cdots \wedge dx^{i_k},$$ and $$\Omega^k_L = -dp_{i_1, \dots, i_k} \wedge d x^{i_1} \wedge \cdots \wedge dx^{i_k}.$$ This immediately shows that the vertical distribution $W_L^k$ associated to the vector bundle $\bigwedge^k L \rightarrow L$ is $1$-isotropic. Additionally, we have:

 \begin{proposition}\label{prop:KLagrangianclosed} $W_L^k$ defines a $1$-Lagrangian distribution. Furthermore, a form (interpreted as a section) $$\alpha: L \rightarrow \bigwedge^k L$$ defines a $k$-Lagrangian submanifold if and only if it is closed.
 \end{proposition}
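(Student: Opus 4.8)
The plan is to treat the two assertions separately: the first I reduce to the fibrewise application of the linear-algebraic decomposition result \cref{isotropicdecomposition}, and the second I reduce to the identity $\alpha^\ast\Omega^k_L = -d\alpha$ from \cref{ex:formsmanifold} combined with the same lemma. Throughout I use that $(\bigwedge^k L, \Omega^k_L)$ is non-degenerate, so that $\ker\flat_1 = 0$ at every point and the notions of $j$-Lagrangian subspace reduce to $W = W^{\perp,j}$.

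For the first claim I work pointwise at each $\alpha_x \in \bigwedge^k L$. The coordinate expression $\Omega^k_L = -dp_{i_1\dots i_k}\wedge dx^{i_1}\wedge\cdots\wedge dx^{i_k}$ shows that every summand of $\Omega^k_L$ carries exactly one factor $dp$. Hence on the coordinate horizontal subspace $U = \langle \partial/\partial x^i\rangle$ the form vanishes identically (each summand needs a $dp$, which annihilates purely horizontal arguments), so $U$ is $k$-isotropic; and $W_L^k$ is $1$-isotropic, as already recorded, since two contractions in fibre directions annihilate the single $dp$ factor. Since $T_{\alpha_x}(\bigwedge^k L) = U \oplus (W_L^k)_{\alpha_x}$, \cref{isotropicdecomposition} immediately yields that $(W_L^k)_{\alpha_x}$ is $1$-Lagrangian. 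As this holds at every point, $W_L^k$ is a $1$-Lagrangian distribution.

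For the second claim, write $N = \alpha(L)$ with inclusion $i: N \hookrightarrow \bigwedge^k L$. Since $\pi\circ\alpha = \mathrm{id}_L$, the section $\alpha: L \to N$ is a diffeomorphism and $i\circ\alpha = \alpha$ as a map into $\bigwedge^k L$. By \cref{prop:characterizationisotropic}, $N$ is $k$-isotropic if and only if $i^\ast\Omega^k_L = 0$, and pulling back further along the diffeomorphism $\alpha$ this is equivalent to $\alpha^\ast\Omega^k_L = 0$; using $\alpha^\ast\Omega^k_L = -d\alpha$ from \cref{ex:formsmanifold}, this says exactly that $\alpha$ is closed. This already gives the equivalence ``$k$-isotropic $\iff$ closed''. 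Since a $k$-Lagrangian subspace is in particular $k$-isotropic (in the non-degenerate case $W = W^{\perp,k}$ forces $W \subseteq W^{\perp,k}$), closedness is necessary. For sufficiency I would again invoke \cref{isotropicdecomposition}: when $\alpha$ is closed, $T_{\alpha_x}N$ is $k$-isotropic, and because $\alpha$ is a section its tangent space is transverse to the fibre, so $T_{\alpha_x}(\bigwedge^k L) = T_{\alpha_x}N \oplus (W_L^k)_{\alpha_x}$ with $(W_L^k)_{\alpha_x}$ being $1$-isotropic. The lemma then upgrades $k$-isotropic to $k$-Lagrangian at each point, and hence $N$ is $k$-Lagrangian.

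The step I expect to be the main obstacle is precisely this upgrade from $k$-isotropic to $k$-Lagrangian. In symplectic geometry an isotropic subspace of half dimension is automatically Lagrangian, but in the multisymplectic setting the dimension of $W^{\perp,k}$ depends intricately on $W$, so a naive dimension count is unavailable; the transversality of the section to the $1$-isotropic vertical distribution is exactly what makes \cref{isotropicdecomposition} applicable and circumvents this difficulty. I would also keep in mind that the horizontal complement $U$ used in the first part is only defined pointwise (it depends on the chart), which is harmless since being $1$-Lagrangian is a pointwise condition on the distribution.
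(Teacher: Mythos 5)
Your proof is correct and follows essentially the same route as the paper: both hinge on \cref{isotropicdecomposition} applied to the splitting of $T(\bigwedge^k L)$ into a $k$-isotropic complement and the $1$-isotropic vertical distribution, together with \cref{prop:characterizationisotropic} and the identity $\alpha^\ast\Omega^k_L = -d\alpha$. The only cosmetic difference is that for the first claim you take the coordinate horizontal subspace as the $k$-isotropic complement, which is just the tangent space to a constant-coefficient (hence closed) local section --- the same object the paper uses implicitly.
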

 \begin{proof} By \cref{isotropicdecomposition}, it is enough to show that $\alpha$ defining a $k$-isotropic submanifold is equivalent to $\alpha$ being closed (this would imply that $\alpha(L)$ is $k$-Lagrangian, and that $W^k_L$ is $1$-Lagrangian, since they are complementary). Indeed, by \cref{prop:characterizationisotropic}, $\alpha(L)$ is $k$-isotropic if and only if $$ 0 = \alpha^\ast \Omega^k_L = - d \alpha, $$ that is, if and only if $\alpha$ is closed.
 \end{proof}

 There is another relevant type of multisymplectic manifolds that generalizes \cref{ex:verticalforms}:
 \begin{example}\label{ex:verticalformmanifold} Let $L$ be a manifold and $\mathcal{E}$ be a regular distribution, where $r, k, \mathcal{E}_x, T_x L$ are in the hypotheses of \cref{hypotheseskr} for every $x \in L$. Define $$\bigwedge^k_r L:= \left\{ \alpha_x \in \bigwedge^k T^\ast_xL: \,\, \iota_{e_1 \wedge \cdots \wedge e_r} \alpha_x = 0,\,\, \forall e_1, \dots, e_r \in \mathcal{E}_x\right\}.$$ It is easy to check that $\bigwedge^k_r L$ defines a non-singular submanifold of $\bigwedge^k L$. Therefore, $$\left(\bigwedge^k_r L, \Omega_L\right)$$ is a multisymplectic manifold of order $k$.
 \end{example}

 \begin{remark}Just like in \cref{Subsection:MultisymplecticVectorSpapces}, throughout the rest of the text we will assume the conditions that make $\bigwedge^k_r L$ a regular multisymplectic manifold.
 \end{remark}

 A natural question to ask is what are the necessary (and sufficient) conditions for a multisymplectic manifold $(M, \omega)$ to be locally multisymplectomorphic to either of the models presented in \cref{ex:formsmanifold} or in \cref{ex:verticalformmanifold}. Of course, if it were the case, the multisymplectic vector space $(T_x M, \omega_x)$ would necessarily be of type $(k, r)$ (for the corresponging values in the model).
 
 \begin{Def}[\cite{deleon2003tulczyjews} Multisymplectic manifold of type $(k,r)$] A multisymplectic manifold of type $(k,r)$ is a tuple $(M, \omega, W, \mathcal{E})$, where $(T_xM,\omega_x, W_x, \mathcal{E}_x)$ is a multisymplectic vector space of type $(k,r)$ and $W$ is a regular integrable distribution.
 \end{Def}
 In \cite{Martin1988ADT}, G. Martin gave the characterization for multisymplectic manifolds of type $(k, 0)$.
   \begin{theorem}[\cite{Martin1988ADT} Darboux theorem for multisymplectic manifolds of type $(k,0)$] Let $(M, \omega, W)$ be a multisymplectic manifold of type $(k,0)$. Then, around each point $x \in M$ there exists a neighborhood $U$ of $x$ in $M$, a manifold $L$, and a multisymplectomorphism $$\phi: (U, \omega) \rightarrow (V, \Omega_L)$$ where $V$ is an open subset of $\bigwedge^k L $. 
 \end{theorem}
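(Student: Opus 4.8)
The plan is to upgrade the pointwise model of \cref{thm:type(kr)} to a local normal form by integrating the distribution $W$ and then constructing the multisymplectomorphism from a carefully chosen potential of $\omega$. First I would invoke the Frobenius theorem: since $W$ is a regular integrable distribution, after shrinking $U$ I may assume that its leaf space is a manifold $L$ and that the leaves are the fibres of a surjective submersion $p\colon U\to L$, so that $W=\ker p_\ast$ and, pointwise, $T_xM/W_x\cong T_{p(x)}L$. The target $\bigl(\bigwedge^k L,\Omega^k_L\bigr)$ of \cref{ex:formsmanifold} is built precisely from this $L$, and I will produce $\Phi\colon U\to\bigwedge^k L$ covering $p$, i.e. with $\pi_L\circ\Phi=p$ where $\pi_L$ is the bundle projection.

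The central step is to exhibit a $W$-horizontal potential, that is a $k$-form $\theta$ on $U$ with $\omega=-d\theta$ and $\iota_w\theta=0$ for every $w\in W$. Here the assumption that $W$ is $1$-Lagrangian is decisive: being $1$-isotropic means $\iota_{v\wedge w}\omega=0$ for $v,w\in W$, so in coordinates $(x^i,y^a)$ adapted to the foliation, with $W=\langle\partial_{y^a}\rangle$, the form $\omega$ is at most linear in the vertical differentials $dy^a$. Combined with $d\omega=0$, a relative (fibrewise) Poincaré lemma integrates $\omega$ to a potential having no $dy^a$ component, which is exactly horizontality. I expect this to be the main obstacle: one must run the homotopy operator along the fibres of $p$ and check that the $1$-isotropy of $W$ really does force the output to be horizontal.

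Given such a $\theta$, horizontality guarantees that $\theta_x$ descends to a well-defined $k$-covector $\overline{\theta}_x\in\bigwedge^k T^\ast_{p(x)}L$ via $\overline{\theta}_x(p_\ast v_1,\dots,p_\ast v_k)=\theta_x(v_1,\dots,v_k)$ (this is unambiguous because $\theta_x$ annihilates $W_x=\ker p_\ast$). I then set $\Phi(x):=\overline{\theta}_x\in\bigwedge^k L$, which satisfies $\pi_L\circ\Phi=p$ by construction. A direct computation using $\pi_{L\ast}\Phi_\ast v=p_\ast v$ and the defining formula for the tautological form gives $(\Phi^\ast\Theta^k_L)_x(v_1,\dots,v_k)=\overline{\theta}_x(p_\ast v_1,\dots,p_\ast v_k)=\theta_x(v_1,\dots,v_k)$, so $\Phi^\ast\Theta^k_L=\theta$ and hence $\Phi^\ast\Omega^k_L=-d\theta=\omega$; thus $\Phi$ is a multisymplectic map.

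It remains to verify that $\Phi$ is a local diffeomorphism onto an open set $V\subseteq\bigwedge^k L$. Since $\Phi$ covers the submersion $p$, I only need its derivative along the leaves, i.e. along the vertical directions $W$, to map isomorphically onto the fibre $\bigwedge^k T^\ast_{p(x)}L$. For $w\in W_x$ extended to a leaf-tangent field, the fibre component of $\Phi_\ast w$ is the descent of $\mathcal{L}_w\theta=\iota_w d\theta+d\iota_w\theta=-\iota_w\omega$, where the second term vanishes by horizontality; this is exactly $-\phi_x(w)$ for the isomorphism $\phi_x\colon W_x\to\bigwedge^k(T_xM/W_x)^\ast$ constructed in the proof of \cref{thm:type(kr)}. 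Hence the vertical derivative is an isomorphism, and since the type $(k,0)$ condition forces $\dim W=\dim\bigwedge^k T^\ast_{p(x)}L$, the dimensions of $U$ and $\bigwedge^k L$ coincide; therefore $\Phi$ is a local diffeomorphism, and shrinking $U$ once more makes it a diffeomorphism onto an open subset $V$, completing the proof.
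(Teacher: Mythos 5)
The statement you were asked to prove is one the paper itself does not prove: it is imported from Martin's work \cite{Martin1988ADT} with no argument given, so there is no in-paper proof to compare against. Judged on its own, your construction is essentially correct and is the natural direct proof: foliate by $W$ via Frobenius, build a $W$-horizontal primitive $\theta$ with $\omega=-d\theta$, and let the descent of $\theta$ itself be the chart into $\bigwedge^k L$. Your steps 3 and 4 are sound as written: the identity $\Phi^\ast\Theta^k_L=\theta$ is exactly the tautological-form property recorded in \cref{ex:formsmanifold}, and your identification of the vertical derivative of $\Phi$ with $w\mapsto-\iota_w\omega$ (descended to $TM/W$) is correct; injectivity of that map follows from non-degeneracy (built into the definition of type $(k,0)$ via \cref{thm:type(kr)}) together with $1$-isotropy of $W$, which makes $\iota_w\omega$ horizontal, and surjectivity follows from the dimension equality, exactly as for the isomorphism $\phi$ in the proof of \cref{thm:type(kr)}. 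This is also a genuinely different style of argument from the machinery the paper does develop: the paper's neighborhood theorem (\cref{normalformlagrangian}) proceeds by Moser's trick, which requires a reference form to deform to, whereas your potential-based construction produces the multisymplectomorphism in one stroke and would in fact be a natural ingredient for proving such Darboux-type statements from scratch.

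The one place where your write-up is not yet a proof is the step you yourself flagged: existence of the horizontal potential. The mechanism you name is the right one, but note what the fibrewise homotopy operator actually yields. In a foliation chart with $h_t(x,y)=(x,ty)$ and $K\omega=\int_0^1 h_t^\ast(\iota_{\xi_t}\omega)\,dt$, where $\xi_t$ is the vertical field generating the homotopy, closedness of $\omega$ gives $\omega = h_0^\ast\omega + dK\omega$, not $\omega = dK\omega$: there is a leftover basic term $h_0^\ast\omega = p^\ast(s^\ast\omega)$, where $s(x)=(x,0)$ is the local section, and $s^\ast\omega$ need not vanish, so the relative Poincar\'e lemma in its usual form does not apply directly. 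The fix is immediate: $s^\ast\omega$ is a closed $(k+1)$-form on $L$, hence equals $d\tau$ after shrinking $L$ to a ball, and $\theta:=-(K\omega+p^\ast\tau)$ is the desired primitive. Horizontality then holds for both summands: $p^\ast\tau$ is basic, while $K\omega$ is horizontal because $1$-isotropy of $W$ forces $\iota_{w'}\iota_{\xi_t}\omega=\iota_{\xi_t\wedge w'}\omega=0$ for all vertical $w'$, and $h_t^\ast$ preserves horizontality since $p\circ h_t=p$. With this completion, your argument goes through.
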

 And, in \cite{deleon2003tulczyjews}, M. de León et. al. generalized the result to multisymplectic manifolds of type $(k, r)$.
   \begin{theorem}[\cite{deleon2003tulczyjews} Darboux theorem for multisymplectic manifolds of type $(k,r)$] \label{thm:darbouxmultisymplectic} Let $(M, \omega, W, \mathcal{E})$ be a multisymplectic manifold of type $(k,r)$. Then, around each point $x \in M$ there exists a neighborhood $U$ of $x$ in $M$, a manifold $L$, and a multisymplectomorphism $$\phi: (U, \omega) \rightarrow (V, \Omega_L)$$ where $V$ is an open subset of $\bigwedge^k_r L $.
   \end{theorem}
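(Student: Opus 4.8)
The plan is to combine the pointwise normal form of \cref{thm:type(kr)} with a Moser-type deformation argument, using the integrability of $W$ to produce the base manifold $L$ and invoking \cref{lemma1} to solve the resulting homological equation despite the fact that $\flat_1$ is never surjective in the genuinely multisymplectic setting.

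First I would apply the Frobenius theorem. Since $W$ is a regular integrable distribution, around $x$ there is a submersion $\pi \colon U \to L$ onto a manifold $L$ (the local leaf space) with $\ker \pi_\ast = W$; the distribution $\mathcal{E}$, which lives in $TM/W$, then descends to a regular distribution on $L$ (a projectability condition one checks using the integrability of $W$), so that the model bundle $\bigwedge^k_r L$ of \cref{ex:verticalformmanifold} is defined. Fixing a linear multisymplectomorphism $T_x M \to T_{\pi(x)}L \oplus \bigwedge^k_r T^\ast_{\pi(x)}L$ provided by \cref{thm:type(kr)}, I would build a diffeomorphism $\psi \colon U \to V \subseteq \bigwedge^k_r L$ which covers a chart on $L$, sends leaves of $W$ to fibres and $\mathcal{E}$ to the canonical distribution, and whose differential at $x$ is the chosen multisymplectomorphism. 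Writing $\omega_0 := \omega$ and $\omega_1 := \psi^\ast \Omega_L$, both are closed $(k+1)$-forms on $U$ agreeing at $x$; shrinking $U$, every form $\omega_t := (1-t)\omega_0 + t\omega_1$ is non-degenerate on $U$ for $t \in [0,1]$, since injectivity of $\flat_1$ is an open condition and $\omega_t|_x = \omega_x$.

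Next I would run Moser's path method: seeking a time-dependent vector field $X_t$ whose flow $\chi_t$ satisfies $\chi_t^\ast \omega_t = \omega_0$, differentiation reduces (using $d\omega_t = 0$) to the homological equation
\[
\iota_{X_t}\omega_t = -\beta, \qquad d\beta = \omega_1 - \omega_0 .
\]
The essential point is that $\beta$ can be taken to be a section of $\bigwedge^k_{1,r}T^\ast M$, i.e. vertical with respect to $W$ and $\mathcal{E}$. Indeed $\omega_1 = -d(\psi^\ast \Theta_L)$, and the tautological form $\Theta_L$ of \cref{ex:formsmanifold}, restricted to $\bigwedge^k_r L$, already lies in $\bigwedge^k_{1,r}$: it is annihilated by contraction with vertical vectors because it depends only on $\pi_\ast$, and it is $r$-vertical along $\mathcal{E}$ by the very definition of $\bigwedge^k_r L$. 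Choosing a vertical primitive of $\omega_0$ by a fibrewise Poincaré lemma along the fibres of $\pi$ then makes the difference $\beta$ vertical. Once $\beta_x \in \bigwedge^k_{1,r}T^\ast_x M$ for every $x$, \cref{lemma1} guarantees $\beta_x \in \flat_1(T_xM)$, and the non-degeneracy of $\omega_t$ makes $X_t := (\flat_1^{\omega_t})^{-1}(-\beta)$ a well-defined smooth vector field; integrating it yields $\chi_1$ with $\chi_1^\ast \omega_1 = \omega_0$, so that $\phi := \psi \circ \chi_1$ satisfies $\phi^\ast \Omega_L = \omega$ and is the desired multisymplectomorphism onto an open subset of $\bigwedge^k_r L$.

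I expect the main obstacle to be precisely the solvability of the homological equation: unlike the symplectic case, $\flat_1$ is not onto, so Moser's scheme can work only if the primitive is forced into the image of $\flat_1$. Two points must therefore be checked with care: (i) that a genuinely vertical primitive $\beta \in \bigwedge^k_{1,r}T^\ast M$ exists on all of $U$ — a relative Poincaré lemma adapted to the foliation by $W$ and to $\mathcal{E}$, exploiting the verticality of $\Theta_L$ — and (ii) that the containment $\bigwedge^k_{1,r}T^\ast_x M \subseteq \flat_1^{\omega_t|_x}(T_x M)$ of \cref{lemma1} persists for the interpolated forms $\omega_t$ on a neighbourhood of $x$, which holds because this containment is an open condition on the $(k+1)$-form and is valid at $x$ for every $t$. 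The structural observation that $\Theta_L$ is itself a section of $\bigwedge^k_{1,r}$ is what makes the whole argument close up.
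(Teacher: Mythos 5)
Your proposal cannot be checked against a proof in this paper: \cref{thm:darbouxmultisymplectic} is imported from \cite{deleon2003tulczyjews} and stated here without proof. The natural in-paper benchmark is the Moser-type proof of \cref{LagrangianFormMultisymplectic}, whose architecture (an interpolation $\Omega_t$, a primitive lying in the doubly vertical subbundle, and \cref{lemma1} to solve the homological equation) your sketch reproduces. Measured against that, you identify the right obstruction, but the two steps that carry all the content are left unproved, and one of them is equivalent to the theorem itself.

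First, the descent of $\mathcal{E}$ to the local leaf space $L$ is not ``a projectability condition one checks using the integrability of $W$''. Integrability of $W$ produces the leaf space but says nothing about $\mathcal{E} \subseteq TM/W$ being invariant along the leaves; in the model of \cref{ex:verticalformmanifold} this holds because $\mathcal{E}$ is pulled back from the base, but for an abstract $(M,\omega,W,\mathcal{E})$ it is an additional structural fact that must either be extracted from $d\omega = 0$ and the pointwise type-$(k,r)$ conditions or added as a hypothesis. Without it you can neither form $\bigwedge^k_r L$ nor choose $\psi$ intertwining $\mathcal{E}$ with the model distribution, and then $\psi^\ast \Theta^k_L$ has no reason to lie in $\bigwedge^k_{1,r}$. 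Separately, your openness argument in (ii) is wrong as stated: containment of a fixed subspace in the image of a continuously varying family of linear maps is not an open condition (the image can tilt); what actually saves this point is that the conditions feeding \cref{lemma1} are linear in the form, so they hold along the convex path $\omega_t$ once they hold for $\omega_0$ and $\omega_1$.

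Second, and fatally: the existence of a primitive of $\omega_0$ lying in $\bigwedge^k_{1,r}$ is precisely the theorem, not something delivered by a ``fibrewise Poincaré lemma''. A leafwise Poincaré lemma along $W$, combined with a Poincaré lemma on $L$, does produce a primitive annihilated by contraction with $W$ (this uses only closedness of $\omega$ and the $1$-isotropy of $W$), but it gives no control whatsoever on the $r$-fold contractions along $\mathcal{E}$, and that second verticality is not a refinement: a $k$-form $\theta$ with $d\theta = -\omega$ which is both $W$-horizontal and $\mathcal{E}$-vertical is the same datum as a smooth map $s \colon U \rightarrow \bigwedge^k_r L$ covering the leaf projection with $s^\ast \Theta^k_L = \theta$, hence $s^\ast \Omega^k_L = \omega$; the $1$-Lagrangian property of $W$ and non-degeneracy make $s$ an immersion, hence by dimension count a local diffeomorphism, so such a $\theta$ already \emph{is} the Darboux chart. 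In other words, your homological equation is solvable with vertical right-hand side exactly when the conclusion holds. This is also where your scheme differs essentially from the proof of \cref{LagrangianFormMultisymplectic}: there the radial primitive $\int_0^1 \pi_t^\ast \iota_\Delta \widetilde\omega \, dt$ is vertical and usable only because a $k$-Lagrangian submanifold transverse to $W$ (the zero section) is given as a hypothesis --- the relative Poincaré lemma requires $\widetilde\omega$ to pull back to zero on it. In the Darboux setting no transverse Lagrangian is available a priori; constructing one through $x$ (equivalently, constructing your vertical primitive) is the actual content of \cref{thm:darbouxmultisymplectic}, and it is the step your proposal assumes.
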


   For a recent review on ``Darboux type Theorems'' in geometric structures appearing in the geometric formulation of Classical Field Theories, we refer to \cite{gràcia2023darboux}.

\section{Hamiltonian structures on multisymplectic manifolds}\label{Section:Hamiltonianstructures}
\subsection{Hamiltonian multivector fields and forms. Brackets}
\begin{Def}[\cite{HamiltonianStructuresIbort} Hamiltonian multivector field, Hamiltonian form] Let $(M, \omega)$ be a multisymplectic manifold of order $k$. A multivector field $$U: M \rightarrow \bigvee_q M$$ will be called a \textbf{Hamiltonian multivector field} if there exists a $(k-q)$-form on $M$, $\alpha$, such that $$\iota_U \omega = d \alpha.$$ In this context, $\alpha$ is called the \textbf{Hamiltonian form} associated to $U$. Furthermore, $U$ will be called a \textbf{locally Hamiltonian multivector field} if $\iota_U\omega$ is closed. Of course, if $U$ is Hamiltonian, it is locally Hamiltonian.
\end{Def}

We will denote by $\mathfrak{X}^q_H(M)$ the space of all Hamiltonian multivector fields of order $q$, and by $\Omega^{l}_H(M)$ the space of all Hamiltonian $l$-forms.\\

There is certain ``correspondence" between Hamiltonian multivector fields and Hamiltonian forms. However, this correspondance is not well defined, a Hamiltonian multivector field $U$ can be associated to different Hamiltonian forms, and viceversa. Nevertheless, if $$\iota_U\omega = d \alpha = d \beta,$$ for some $\alpha, \beta \in \Omega^l_H(\Omega)$, we have that $$d(\alpha - \beta) = 0.$$ Therefore, we obtain a well defined epimorphism $$\mathfrak{X}^q_H(M) \xrightarrow{\flat_q} \Omega^{k - q}_H (M) /Z^{k-q}(M) =: \Hform{k-q}{M},$$ where $Z^{k-q}(M)$ is the space of all closed forms, mapping each Hamiltonian multivector field $U$ to the class of Hamiltonian forms $[\alpha]$ satisfying $$\iota_U\omega = d \alpha.$$ We would like this map to be inyective, and we can achieve this by quotienting $\mathfrak{X}^q_H(M)$ by $\ker \flat_q$, which is the space of all multivector fields $U$ satisfying $$\iota_U \omega = 0.$$ Therefore, defining $$\Hmultivector{q}{M} := \mathfrak{X}_H^q(M) /\ker \flat_q,$$ we obtain isomorphisms between the spaces
\[\begin{tikzcd}
	{\Hmultivector{1}{M}} & \cdots & {\Hmultivector{q}{M}} & \cdots & {\Hmultivector{k}{M}} \\
	\\
	{\Hform{k-1}{M}} & \cdots & {\Hform{k-q}{M}} & \cdots & {\Hform{0}{M}}
	\arrow["{\flat_1}", tail reversed, from=1-1, to=3-1]
	\arrow["{\flat_q}", tail reversed, from=1-3, to=3-3]
	\arrow["{\flat_k}", tail reversed, from=1-5, to=3-5]
\end{tikzcd}.\]
Of course, these isomorphisms induce an isomorphism between the corresponding graded vector spaces 
$$\SHmultivector{M} := \bigoplus_{q = 1}^{k} \Hmultivector{q}{M} \xrightarrow{ \flat} \SHform{M} := \bigoplus_{q = 1}^{k} \Hform{k - q}{M}.$$ We can try to endow these spaces with a graded Lie algebra structure. Given the isomorphism, it would be enough to define the bracket in one of the spaces and obtain the induced bracket in the other via the $\flat$ mappings. 
\begin{proposition}[\cite{HamiltonianStructuresIbort}]\label{bracketmultivectorfields} Let $(M, \omega)$ be a multisymplectic manifold, and $U , V$ be Hamiltonian multivector fields of degree $p, q$, respectively. Then, $[U, V]$ is a Hamiltonian multivector field of degree $p + q - 1$, where $[\cdot, \cdot]$ denotes the Schouten-Nijenhuis bracket (see \cite{vaisman2012lectures}).
\end{proposition}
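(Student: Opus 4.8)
The plan is to reduce the statement to the standard characterization of the Schouten--Nijenhuis bracket through its action by contraction on differential forms, and then to exploit that $\iota_U\omega$ and $\iota_V\omega$ are both exact together with $d\omega=0$. Write $p=\deg U$, $q=\deg V$, and fix Hamiltonian forms $\alpha\in\Omega^{k-p}_H(M)$, $\beta\in\Omega^{k-q}_H(M)$ with $\iota_U\omega=d\alpha$ and $\iota_V\omega=d\beta$. For a multivector field $U$ of degree $p$ I would introduce the generalised Lie derivative $\mathcal{L}_U:=\iota_U\circ d-(-1)^p\,d\circ\iota_U$, i.e. the graded commutator $[\iota_U,d]$, which is a well-defined operator of degree $1-p$ on $\Omega^\bullet(M)$ (I do not need it to be a genuine derivation, only the operator identity below).

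The first key step is the operator identity expressing contraction by the bracket in terms of $\iota_U$, $\iota_V$ and $d$: as endomorphisms of $\Omega^\bullet(M)$,
$$\iota_{[U,V]}=\pm\,[\mathcal{L}_U,\iota_V],$$
where the bracket on the right is the graded commutator and the global sign depends only on $p$ and $q$ (see \cite{vaisman2012lectures}); since a multivector field is determined by the contraction it induces on forms, this identity may be taken as the defining property of the Schouten--Nijenhuis bracket. Evaluating on $\omega$ produces, up to sign, exactly the two terms $\mathcal{L}_U(\iota_V\omega)$ and $\iota_V(\mathcal{L}_U\omega)$.

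The second step disposes of each term using the hypotheses. For the Lie derivative of $\omega$ I compute
$$\mathcal{L}_U\omega=\iota_U\,d\omega-(-1)^p\,d(\iota_U\omega)=0-(-1)^p\,d(d\alpha)=0,$$
using that $\omega$ is closed and $\iota_U\omega$ exact; hence the term $\iota_V(\mathcal{L}_U\omega)$ vanishes. For the surviving term, since $\iota_V\omega=d\beta$ is closed,
$$\mathcal{L}_U(\iota_V\omega)=\iota_U\,d(\iota_V\omega)-(-1)^p\,d\bigl(\iota_U(\iota_V\omega)\bigr)=-(-1)^p\,d\bigl(\iota_U\iota_V\omega\bigr),$$
which is exact. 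Combining the two steps gives $\iota_{[U,V]}\omega=d\gamma$ with $\gamma=\pm\,\iota_U\iota_V\omega$. A degree count confirms consistency: $\iota_U\iota_V\omega$ is a $\bigl((k+1)-p-q\bigr)$-form, i.e. a $\bigl(k-(p+q-1)\bigr)$-form, precisely the degree of a Hamiltonian form attached to a multivector field of degree $p+q-1$. Therefore $[U,V]$ is Hamiltonian of degree $p+q-1$, with Hamiltonian form $\pm\,\iota_U\iota_V\omega$.

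The main obstacle I anticipate is purely bookkeeping: pinning down the correct Koszul signs in the graded commutators $[\iota_U,d]$ and $[\mathcal{L}_U,\iota_V]$ and verifying the operator identity $\iota_{[U,V]}=\pm[\mathcal{L}_U,\iota_V]$ in the conventions used here (or simply citing it). None of these signs affects the conclusion, since exactness of $\iota_{[U,V]}\omega$ is insensitive to an overall nonzero scalar; the mathematical content lies entirely in the two computations above, both of which use only that $\omega$ is closed and that $\iota_U\omega$, $\iota_V\omega$ are exact. It is worth noting that the argument in fact only requires $\iota_U\omega$ and $\iota_V\omega$ to be closed, so the same reasoning shows that the bracket of locally Hamiltonian multivector fields is again (locally) Hamiltonian, with $\iota_U\iota_V\omega$ as the natural candidate for the induced bracket of forms.
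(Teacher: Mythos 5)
Your proposal is correct and takes essentially the same route as the paper: the paper's entire proof consists of citing from Vaisman the identity $\iota_{[U,V]}\omega = -d\,\iota_{U\wedge V}\omega$ (valid here because $\omega$, $\iota_U\omega$ and $\iota_V\omega$ are all closed), which is precisely what your two computations derive from the graded-commutator characterization $\iota_{[U,V]} = \pm[\mathcal{L}_U,\iota_V]$ of the Schouten--Nijenhuis bracket. Your closing observation that only closedness (not exactness) of $\iota_U\omega$ and $\iota_V\omega$ is needed is also consistent with how the paper immediately reuses the same identity, both for locally Hamiltonian fields and for elements of $\ker\flat_p$.
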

\begin{proof} We have the equality (see \cite{vaisman2012lectures}) $$\iota_{[U, V]} \omega =  - d \iota_{U \wedge V} \omega,$$ which proves the proposition.
\end{proof}
Given the equality $$\iota_{[U, V]}\omega = - d \iota_{U \wedge V} \omega$$ from \cref{bracketmultivectorfields}, we have that whenever $U \in \ker \flat_p$ (or $V \in \ker \flat_q$),  then 
$$
[U, V] \in \ker \flat_{p +q - 1}.
$$ 
Therefore, we obtain a well defined bracket $$\Hmultivector{p}{M} \times \Hmultivector{q}{M} \rightarrow \Hmultivector{p +q -1}{M}; \,\, (\widehat{U}, \widehat{V}) \mapsto [\widehat{U}, \widehat{V}]:= \widehat{[U, V]},$$ where $\widehat{U}$ denotes the class of $U$ modulo $\ker \flat_q.$ By the previous considerations, we define the induced bracket in $\SHform{M}$ through the following commutative diagram,
\[\begin{tikzcd}
	{\Hform{l}{M} \times \Hform{m}{M}} & {\Hform{1 + l + m - k}{M}}\\
	{\Hmultivector{k - l}{M} \times \Hmultivector{k - m}{M}} & {\Hmultivector{2k - l - m - 1}{M}}
	\arrow["{\{\cdot, \cdot\}}", from=1-1, to=1-2]
	\arrow["{[\cdot, \cdot]}", from=2-1, to=2-2]
	\arrow["\flat_{k - l} \times \flat_{k - m}"{description}, tail reversed, from=1-1, to=2-1]
	\arrow["\flat_{2k - l - m -1}"{description}, tail reversed, from=1-2, to=2-2]
\end{tikzcd}.\] This bracket is given by $$\{\hat{\alpha}, \hat{\beta}\} = - \widehat{ \iota_{U \wedge V} \omega},$$ where $\iota_U \omega = d \alpha$, $\iota_V \omega = d \beta,$
and satisfies the following equalities (which follow easily from the equalities of Schouten-Nijenhuis bracket \cite{vaisman2012lectures})
\begin{itemize}
    \item[$i)$] $$\{\widehat{\alpha}, \widehat{\beta}\} = (-1)^{l_1l_2}\{\widehat{\beta}, \widehat{\alpha}\};$$
    \item[$ii)$] $$(-1)^{l_1(l_3 -1)}\{\widehat{\alpha},\{\widehat{\beta}, \widehat{\gamma}\}\} + (-1)^{l_2(l_1 -1)}\{\widehat{\beta},\{\widehat{\gamma}, \widehat{\alpha}\}\} + (-1)^{l_3(l_2 -1)}\{\widehat{\gamma},\{\widehat{\alpha}, \widehat{\beta}\}\} = 0,$$
\end{itemize}
 for $\widehat{\alpha} \in \Hform{l_1}{M}, \widehat{\beta} \in \Hform{l_2}{M}, \widehat{\gamma} \in \Hform{l_3}{M}.$ However, this bracket does not define a graded Lie algebra and we need to modify the definition slightly to get a bracket that does. First, recall that a graded Lie bracket needs to satisfy
$$\deg \{\widehat{\alpha}, \widehat{\beta}\} = \deg \widehat{\alpha} + \deg \widehat{\beta},$$ for certain notion of degree. Now, since the subspace $\Hform{k-1}{M}$ is closed under $\{\cdot, \cdot\}$, we are forced to set
$$\deg \widehat{\alpha} := 0,$$ for $\alpha \in \Hform{k-1}{M}.$ Therefore, one is tempted to define $$\deg \widehat{\alpha} := k -1 - (\text{order of } \alpha),$$ for $\widehat{\alpha} \in \SHform{M}.$ And, indeed, for 
$\widehat{\alpha} \in \Hform{l}{M}, \widehat{\beta} \in \Hform{m}{M}$, we have $$\deg\{\widehat{\alpha}, \widehat{\beta}\} = k - 1 - (1 + l + m - k)= 2k - l - m - 2 = (k - 1 - l) + (k - 1 - m) = \deg \widehat{\alpha} + \deg \widehat{\beta}.$$ We can now define $$\{\widehat{\alpha}, \widehat{\beta}\}^\bullet := (-1)^{\deg \widehat \alpha} \{ \widehat{\alpha}, \widehat{\beta}\},$$ and we have that
\begin{itemize}
    \item[$i)$] $$\{\widehat{\alpha}, \widehat{\beta}\}^\bullet = -(-1)^{\deg \widehat{\alpha} \deg \widehat{\beta}} \{\widehat{\beta}, \widehat{\alpha}\}^\bullet,$$
    \item[$ii)$] $$(-1)^{\deg \widehat{\alpha} \deg \widehat{\gamma}}\{\widehat{\alpha}, \{\widehat{\beta}, \widehat{\gamma}\}^\bullet\}^\bullet + \text{cycl.} = 0.$$
\end{itemize}

Summarizing, we have proved
\begin{theorem}[\cite{HamiltonianStructuresIbort}] $(\SHform{M}, \{ \cdot, \cdot\}^\bullet)$ is a graded Lie algebra.
\end{theorem}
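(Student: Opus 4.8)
The plan is to read the claim directly off the three structural facts assembled in the preceding discussion, since a graded Lie algebra on the graded vector space $\SHform{M}$ (with $\deg\widehat{\alpha} := k-1-(\text{order of }\alpha)$) is by definition a bilinear bracket that is homogeneous of degree $0$, graded skew-symmetric, and satisfies the graded Jacobi identity. Well-definedness of $\{\cdot,\cdot\}$, and hence of $\{\cdot,\cdot\}^\bullet$, on $\SHform{M}$ has already been arranged by quotienting out $\ker\flat$, and the degree-$0$ homogeneity is exactly the identity $\deg\{\widehat\alpha,\widehat\beta\}=\deg\widehat\alpha+\deg\widehat\beta$ computed above. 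So the entire content is to promote properties $i)$ and $ii)$ of the unmodified bracket to the graded skew-symmetry and graded Jacobi identity of $\{\cdot,\cdot\}^\bullet$.

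For skew-symmetry, I would substitute the definition $\{\widehat\alpha,\widehat\beta\}^\bullet=(-1)^{\deg\widehat\alpha}\{\widehat\alpha,\widehat\beta\}$ into property $i)$ and collect the prefactors $(-1)^{\deg\widehat\alpha}$ and $(-1)^{\deg\widehat\beta}$ together with the sign $(-1)^{l_1 l_2}$ coming from $i)$. Writing the orders in terms of degrees via $l_i = k-1-\deg$, the claimed identity $\{\widehat\alpha,\widehat\beta\}^\bullet=-(-1)^{\deg\widehat\alpha\,\deg\widehat\beta}\{\widehat\beta,\widehat\alpha\}^\bullet$ reduces to a single parity congruence modulo $2$; verifying that it holds identically is exactly the reason the prefactor $(-1)^{\deg\widehat\alpha}$ was chosen.

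The main work, and the place to be careful, is the graded Jacobi identity $ii)$. Here I would start from the cyclic identity $ii)$ for $\{\cdot,\cdot\}$, namely the vanishing of $(-1)^{l_1(l_3-1)}\{\widehat\alpha,\{\widehat\beta,\widehat\gamma\}\}+\text{cycl.}$, and rewrite each doubly-nested bracket in terms of $\{\cdot,\cdot\}^\bullet$. Each replacement of an inner and an outer bracket introduces two prefactors of the form $(-1)^{\deg}$, whose exponents themselves depend on the order of the inner bracket; tracking these against the existing signs $(-1)^{l_i(l_j-1)}$ and reducing modulo $2$ should turn the three summands into $(-1)^{\deg\widehat\alpha\deg\widehat\gamma}\{\widehat\alpha,\{\widehat\beta,\widehat\gamma\}^\bullet\}^\bullet+\text{cycl.}$, up to one common overall factor. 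The bookkeeping is the only obstacle: one must express the order of $\{\widehat\beta,\widehat\gamma\}$, equal to $1+l_2+l_3-k$, in terms of degrees, feed it into the prefactor attached to the outer bracket, and check that all three cyclic terms pick up the same overall sign, so that their sum still vanishes. Once both graded identities are established, the four axioms are in place and $(\SHform{M},\{\cdot,\cdot\}^\bullet)$ is a graded Lie algebra.
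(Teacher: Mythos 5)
Your overall strategy coincides with the paper's: the theorem there is literally a summary (``Summarizing, we have proved\dots'') of the preceding discussion, and its proof consists of exactly the three ingredients you list --- well-definedness of the bracket on the quotients, additivity of $\deg$, and the promotion of properties $i)$ and $ii)$ to the graded axioms via the sign twist $\{\cdot,\cdot\}^\bullet = (-1)^{\deg\widehat{\alpha}}\{\cdot,\cdot\}$.

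The gap is that the parity bookkeeping you defer to does not come out as you claim if you feed in property $i)$ with the exponent $l_1l_2$ as stated. Write $a = \deg\widehat{\alpha} = k-1-l_1$ and $b = \deg\widehat{\beta} = k-1-l_2$. Unwinding the definition of $\{\cdot,\cdot\}^\bullet$, graded skew-symmetry is equivalent to $\{\widehat{\alpha},\widehat{\beta}\} = (-1)^{(a+1)(b+1)}\{\widehat{\beta},\widehat{\alpha}\} = (-1)^{(k-l_1)(k-l_2)}\{\widehat{\beta},\widehat{\alpha}\}$, whereas property $i)$ supplies the exponent $l_1l_2$. The two exponents differ by $k(k-l_1-l_2)$, which is odd whenever $k$ is odd and $l_1+l_2$ is even, so your ``single parity congruence'' is not an identity. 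The symplectic case already exposes this: for $k=1$ and $l_1=l_2=0$, property $i)$ as written would make the Poisson bracket symmetric, which is false. What is actually true --- and what the Schouten--Nijenhuis argument gives, via $\{\widehat{\alpha},\widehat{\beta}\} = -\widehat{\iota_{U\wedge V}\omega}$ together with $U\wedge V = (-1)^{(k-l_1)(k-l_2)}\,V\wedge U$ --- is the symmetry whose exponent involves the orders of the Hamiltonian \emph{multivector fields}, not of the forms; with that input the twist works identically, since $a+(a+1)(b+1) \equiv 1+ab+b \pmod 2$. The Jacobi step suffers the same defect: with the exponents $l_i(l_j-1)$ as written, the three cyclic terms do not acquire a common overall sign (for odd $k$ this requires all degrees to have equal parity), so the reduction you describe fails; one must first restate $ii)$ with signs in the shifted multivector degrees, i.e.\ as the transported graded Jacobi identity of the Schouten--Nijenhuis bracket, and only then apply the twist. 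In short, your plan is repairable, but the two verifications on which the entire proof rests are asserted rather than carried out, and carried out literally from $i)$ and $ii)$ they fail; the genuine content of the proof is deriving the correct sign rules for $\{\cdot,\cdot\}$ from the Schouten--Nijenhuis identities before twisting.
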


\begin{remark} Of course, restricting this structure to the forms of order $k-1$ we obtain the Lie algebra $(\Hform{k-1}{M}, \{ \cdot, \cdot\}^\bullet)$. This Lie algebra is of particular importance in the study of multisymplectic manifolds, since $(k-1)$-forms represent the \textit{conserved quantities} and \textit{currents} of classical field theory and calculus of variations.
\end{remark}

\begin{remark} If $(M, \omega) = (\bigwedge^k_2 L, \Omega_L),$ we can obtain a graded Lie bracket without quotienting by closed forms by restricting the bracket to the subspace of semi-basic forms. For further details, we refer to \cite{KANATCHIKOV1997225}.
\end{remark}

Similar to the characterization of coisotropic submanifold of a symplectc manifold in terms of the Poisson algebra, we can prove the following result.\\

\begin{proposition}\label{prop:subalgebracoisotropic} Let $i: N \hookrightarrow M$ be a $k$-coisotropic submanifold. Then $$ \widehat{I}_N := \{ \widehat{\alpha} \in \Hform{k-1}{M} : \,\, i^\ast d\alpha = 0\}\footnote{\text{That is, the space of all Hamiltonian $(k-1)$-forms that have a representative which is closed on $N$.}}$$ defines a subalgebra of $(\Hform{k-1}{M}, \{\cdot, \cdot\}^{\bullet}).$
\end{proposition}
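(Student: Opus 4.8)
The plan is to reduce the statement to a pointwise linear-algebra computation along $N$. Note first that on $\Hform{k-1}{M}$ every element has degree $0$, so $\{\cdot,\cdot\}^\bullet = \{\cdot,\cdot\}$ there and it suffices to work with the unmodified bracket; also, since $1+(k-1)+(k-1)-k = k-1$, the bracket of two classes in $\Hform{k-1}{M}$ again lies in $\Hform{k-1}{M}$, so the closure statement makes sense. Pick representatives $\alpha,\beta$ with $i^\ast d\alpha = i^\ast d\beta = 0$ and let $U,V$ be associated Hamiltonian vector fields, $\iota_U\omega = d\alpha$, $\iota_V\omega = d\beta$ (they are vector fields because $k-(k-1)=1$). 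By the definition of the bracket together with \cref{bracketmultivectorfields}, the class $\{\widehat{\alpha},\widehat{\beta}\}$ is represented by $-\iota_{U\wedge V}\omega$, a Hamiltonian $(k-1)$-form with associated field $[U,V]$. I would aim to prove the \emph{stronger} pointwise fact that $i^\ast(\iota_{U\wedge V}\omega) = 0$; since pullback commutes with the exterior derivative, this gives $i^\ast d(\iota_{U\wedge V}\omega) = d\,i^\ast(\iota_{U\wedge V}\omega) = 0$ at once, so the class lies in $\widehat{I}_N$.

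First I would translate the hypotheses into conditions on $U,V$ along $N$. The identity $i^\ast d\alpha = i^\ast\iota_U\omega = 0$ says exactly that $\omega(U_x, w_1,\dots,w_k) = 0$ for all $w_1,\dots,w_k \in T_xN$, i.e. $U_x \in (T_xN)^{\perp, k}$ for every $x \in N$, and similarly $V_x \in (T_xN)^{\perp, k}$. Invoking now that $N$ is $k$-coisotropic, $(T_xN)^{\perp, k} \subseteq T_xN + \ker(\flat_1)_x$, so at each $x\in N$ I may write $V_x = V^T + V^K$ with $V^T \in T_xN$ and $V^K \in \ker(\flat_1)_x$. This decomposition need only be pointwise, which is all the computation requires.

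It then remains to evaluate, for $x \in N$ and arbitrary $w_1,\dots,w_{k-1} \in T_xN$,
$$(\iota_{U\wedge V}\omega)_x(w_1,\dots,w_{k-1}) = \pm\,\omega_x(U_x, V_x, w_1,\dots,w_{k-1}),$$
and expand $V_x = V^T + V^K$ by multilinearity. In the $V^T$-term the arguments $V^T, w_1,\dots,w_{k-1}$ are $k$ vectors lying in $T_xN$, tested against $U_x \in (T_xN)^{\perp,k}$, so that term vanishes; in the $V^K$-term, $V^K \in \ker\flat_1$ means $\iota_{V^K}\omega = 0$, so $\omega_x(\dots,V^K,\dots)=0$ as well. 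Hence $i^\ast(\iota_{U\wedge V}\omega)=0$, completing the argument. The substantive point — and the only place the proof departs from the classical symplectic one, where one merely shows the Hamiltonian fields are tangent to $N$ — is the presence of the degenerate directions $\ker\flat_1$ in the $k$-coisotropic condition; the main thing to get right is that these contribute nothing, since kernel vectors annihilate $\omega$, so that coisotropy ``up to the kernel'' serves just as well as genuine tangency.
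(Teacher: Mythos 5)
Your proposal is correct and follows essentially the same route as the paper's proof: represent the bracket by $\pm\,\iota_{U\wedge V}\omega$, use $i^\ast d\alpha = i^\ast d\beta = 0$ to place the Hamiltonian fields in $(TN)^{\perp,k}$ along $N$, and invoke $k$-coisotropy to show $i^\ast(\iota_{U\wedge V}\omega) = 0$. The only difference is cosmetic: you carry out the decomposition $V_x = V^T + V^K$ pointwise by multilinearity, whereas the paper says ``without loss of generality'' the fields may be taken tangent to $N$; your version is marginally cleaner since it avoids any implicit smoothness claim about that decomposition.
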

\begin{proof}
Let $\widehat{\alpha}, \widehat{\beta} \in \widehat{I}_N.$ Then, there are vector fields $X_\alpha, X_\beta$ satisfying $$\iota_{X_\alpha} \omega = d \alpha, \,\, \iota_{X_\beta} \omega = d \beta.$$ Since $i^\ast d\alpha, i^\ast d\beta = 0,$ we conclude that $X_\alpha, X_\beta$ take values in $(TN)^{\perp, k} \subseteq TN + \ker \flat_1.$ Without loss of generality, we can assume that $X_\alpha$, $X_\beta$ take values in $TN$. Now, since $$\{\widehat{\alpha}, \widehat{\beta}\}^\bullet =  (-1)^{(k-1)} \widehat{ \iota_{X_\alpha \wedge X_\beta} \omega},$$ and $X_\alpha$, $X_\beta$ take values in $(TN)^{\perp, k}$ and $TN$ , we have $$  i^\ast \left(\iota_{X_\alpha \wedge X_\beta} \omega\right) = 0,$$ concluding that $$\{\widehat{\alpha}, \widehat{\beta}\}^\bullet \in \widehat{I}_N.$$
\end{proof}
\begin{remark} When $(M, \omega)$ is non-degenerate, each Hamiltonian $(k-1)$-form $\alpha$ defines an \textit{unique} vector field $X_\alpha$ satisfying $$\iota_{X_\alpha} \omega = d \alpha.$$ Therefore, the bracket $$\{\alpha, \beta\} = \iota_{X_\alpha\wedge X_\beta}\omega$$ is well defined. This, however, does not define a Lie algebra since the Jacobi identity holds up to an exact form. Nevertheless, it does defines an algebraic structure called an $L_\infty$-algebra (see \cite{Linfintyalgebra}). \cref{prop:subalgebracoisotropic} is also true in this context, that is, to each coisotropic submanifold $N$, there is the corresponding $L_\infty$-algebra of forms that are closed on $N$.
\end{remark}

Let us now briefly discuss conserved quantities. Consider a locally decomposable Hamiltonian multivector field of order $q$, $$\iota_{X_H} \omega = dH,$$ where $H \in \Omega^{k-q}(M)$ is the Hamiltonian. We will consider as a solution any immersion $\phi: \Sigma \rightarrow M,$ where $\dim \Sigma = q,$ satisfying $$\phi_\ast U = X_H,$$ where $U$ is some nowhere vanishing multivector field of order $q$ on $\Sigma.$ Then, a conserved quantity (for the solution $\phi$) is a $(q-1)$-form satisfying $$d \phi^\ast \alpha = 0.$$ In terms of possibly non-decomposable (nor integrable) multivector fields, this notion extends as follows
\begin{Def}[Conserved quantity] A conserved quantity for a Hamiltonian multivector field $X_H \in \mathfrak{X}^q(M)$ is a $(q-1)-$form $\alpha$ on $M$ satisfying $$\langle d \alpha, X_H \rangle = 0.$$ 
\end{Def}
Then, for Hamiltonian forms, we have the following
\begin{proposition} Let $X_H$ be a Hamiltonian multivector field of order $q$, with Hamiltonian form $H \in \Omega^{k-q}(M)$ and $\alpha$ be a Hamiltonian form of order $q-1$. Then $\alpha$ is a conserved quantity for $X_H$ if and only if $$\{\widehat\alpha, \widehat H\}^\bullet = 0.$$
\end{proposition}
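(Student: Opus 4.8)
The plan is to reduce both conditions to a single statement about the function $\iota_{X_\alpha\wedge X_H}\omega$. Here I fix $X_\alpha$ to be any Hamiltonian multivector field of order $k-q+1$ with $\iota_{X_\alpha}\omega = d\alpha$; such an $X_\alpha$ exists because $\alpha$ is a Hamiltonian $(q-1)$-form, and under the $\flat$-isomorphisms the class $\widehat\alpha\in\Hform{q-1}{M}$ is precisely the image of $\widehat{X_\alpha}\in\Hmultivector{k-q+1}{M}$. First I would record the degree bookkeeping: since $\alpha$ has order $q-1$ and $H$ has order $k-q$, the bracket lands in $\Hform{1+(q-1)+(k-q)-k}{M}=\Hform{0}{M}$, i.e. it is the class of a function modulo closed (locally constant) functions.

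The key computation is the multivector contraction identity $\iota_{X_\alpha\wedge X_H}\omega=\iota_{X_H}\iota_{X_\alpha}\omega$ (with the convention $\iota_{U\wedge V}=\iota_V\circ\iota_U$). Substituting $\iota_{X_\alpha}\omega=d\alpha$ gives
\[
\iota_{X_\alpha\wedge X_H}\omega=\iota_{X_H}d\alpha=\langle d\alpha,X_H\rangle .
\]
Feeding this into the defining formula $\{\widehat\alpha,\widehat H\}=-\widehat{\iota_{X_\alpha\wedge X_H}\omega}$ together with the sign $\{\cdot,\cdot\}^\bullet=(-1)^{\deg\widehat\alpha}\{\cdot,\cdot\}$ yields
\[
\{\widehat\alpha,\widehat H\}^\bullet=(-1)^{\deg\widehat\alpha+1}\,\widehat{\langle d\alpha,X_H\rangle}.
\]
So the bracket is, up to a nonzero sign, exactly the class of the conserved-quantity function $\langle d\alpha,X_H\rangle$ in $\Hform{0}{M}$. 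I would also note that this function is insensitive to the choice of representatives: replacing $\alpha$ by $\alpha+\textrm{closed}$ leaves $d\alpha$ unchanged, and replacing $X_H$ by $X_H+K$ with $K\in\ker\flat_q$ changes the value by $\iota_{X_\alpha\wedge K}\omega=\pm\iota_{X_\alpha}\iota_K\omega=0$, so the identity descends correctly to the quotients.

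From here the forward implication is immediate: if $\alpha$ is a conserved quantity then $\langle d\alpha,X_H\rangle=0$, so its class vanishes and $\{\widehat\alpha,\widehat H\}^\bullet=0$. The main obstacle is the converse, because $\{\widehat\alpha,\widehat H\}^\bullet=0$ only asserts that $\langle d\alpha,X_H\rangle$ is the zero class in $\Hform{0}{M}=\Omega^0_H(M)/Z^0(M)$, i.e. that it is \emph{closed} (locally constant), which is a priori weaker than vanishing identically. I would resolve this degree-zero subtlety by observing that, just as currents are only ever determined modulo closed forms, the conserved-quantity condition that the $\flat$-isomorphism actually sees is the class $\widehat{\langle d\alpha,X_H\rangle}$; under this reading the equivalence is tautological once the displayed identity is established. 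If instead one demands $\langle d\alpha,X_H\rangle=0$ on the nose, the equivalence holds after fixing the free additive constant, which is precisely the degree-zero shadow of the general fact that Hamiltonian forms are defined only up to $\ker d$. This quotient point, rather than any computation, is the only delicate step.
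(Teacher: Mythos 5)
The paper states this proposition without proof---it is presented as an immediate consequence of the definition of the bracket---so there is no argument of the authors' to compare yours against; your computation is precisely the one they leave implicit, and it is correct. The degree count placing $\{\widehat\alpha,\widehat H\}^\bullet$ in $\Hform{0}{M}$, the identity $\iota_{X_\alpha\wedge X_H}\omega=\iota_{X_H}\iota_{X_\alpha}\omega=\langle d\alpha,X_H\rangle$ (up to a convention-dependent, hence harmless, sign), and the verification that both sides depend only on the classes $\widehat\alpha$ and $\widehat{X_H}$ are exactly the needed ingredients.

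The subtlety you isolate in the converse is genuine, and it is a defect of the paper's statement rather than of your argument. As literally stated, the ``if'' direction is false: $\{\widehat\alpha,\widehat H\}^\bullet=0$ in $\Hform{0}{M}=\Omega^0_H(M)/Z^0(M)$ only forces $\langle d\alpha,X_H\rangle$ to be locally constant, not zero. A counterexample already occurs for $k=q=1$: on $M=\mathbb{R}^2$ with $\omega=dx\wedge dp$, take $H=p$ (so $X_H=\partial_x$) and $\alpha=x$ (so $X_\alpha=-\partial_p$); then $\langle d\alpha,X_H\rangle=1$, so $\alpha$ is not a conserved quantity, while $\iota_{X_\alpha\wedge X_H}\omega=1\in Z^0(M)$ and hence $\{\widehat\alpha,\widehat H\}^\bullet=0$. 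What your computation actually establishes---and what the proposition should say---is that $\{\widehat\alpha,\widehat H\}^\bullet=0$ if and only if $\langle d\alpha,X_H\rangle$ is locally constant; this agrees with the conserved-quantity condition exactly when the constant can be pinned to zero, for instance under your reading of conserved quantities modulo $Z^0(M)$, or when $\langle d\alpha,X_H\rangle$ is known to vanish at one point of each connected component. Either of your proposed repairs is the honest way to close the gap; no computation beyond the one you gave is missing.
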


For a treatment of conserved quantities and moment maps using the $L_\infty-$algebra strcuture of observables, we refer to \cite{ExistenceOfComoments,ConservedQuantitiesMarco}.
\subsection{Hamiltonian multivector fields as Lagrangian submanifolds}\label{Subsection:HamiltonianAsLagrangian}
Given a symplectic manifold $(M, \omega)$, we can endow its tangent bundle with a symplectic structure using the bundle ismorphism $$TM \xrightarrow{\flat} T^\ast M,$$ and the canonical symplectic form on $T^\ast M.$ With this definition and interpreting a vector field $X: M \rightarrow TM$ as a submanifold, $X$ is $1$-Lagrangian if and only if it is locally Hamiltonian. We would like to generalize this result to general multisymplectic manifolds and multivector fields of aribitrary order $q$ $$U:M\rightarrow \bigvee_qM.$$ In \cite{HamiltonianStructuresIbort}, the authors prove a generalization of the result to vector fields in multisymplectic manifolds $$X: M \rightarrow TM,$$ endowing the tangent bundle $TM$ with a multisymplectic structure via the complete lift of forms. We will explore how to generalize this method in $\cref{Subsection:Completelift}$. In the meantime, let us begin by defining a multisymplectic structure on $\bigvee_q M.$\\

Given a multisymplectic manifold $(M, \omega)$ of order $k$, we have the induced map by contraction $$\bigvee_q M \xrightarrow{\flat_q} \bigwedge^{k+1-q} M;\, u \mapsto \iota_u \omega.$$ Using the canonical multisymplectic form $\Omega_M^{k + 1 - q}$ on $\bigwedge^{k + 1 - q} M$, we can define the closed form (in fact, exact)$$\widetilde{\Omega}^q_M := (\flat_q)^\ast \Omega_M^{k +1 - q},$$ which endows $\bigvee_q M$ with a multisymplectic structure of order $(k + 1 - q)$. Notice that, for $q = 1$, the order of the multisymplectic structure on $TM$ is the order of the multisymplectic structure on $M$.

\begin{remark} Even if $\omega$ is non-degenerate, $\widetilde\Omega_M^q$ could have non trivial kernel. This motivates the study of ``general" multisymplectic structures that we have adopted along this paper, which provides a way of interpreting multivector fields as Lagrangian submanifolds of (possible degenerate)  multisymplectic manifolds.
\end{remark}

Denote by $\widetilde{W}_M^q$ the vertical distribution associated to the vector bundle $$\bigvee_q M \rightarrow M.$$ Since $\flat_q$ is a bundle map
\[\begin{tikzcd}
	{\bigvee_qM} && {\bigwedge^{k+1-q}M} \\
	& M
	\arrow["{\flat_q}", from=1-1, to=1-3]
	\arrow[curve={height=6pt}, from=1-1, to=2-2]
	\arrow[curve={height=-6pt}, from=1-3, to=2-2]
\end{tikzcd},\] we have that $$(\flat_q)_\ast \widetilde{W}_M^q \subseteq W_M^{k+1-q},$$ where $W_M^{k+1-q}$ is the vertical distribution of the vector bundle $$\bigwedge^{k+1-q}M \rightarrow M.$$
Now, recall that $W_M^{k +1 - q}$ defines a $1$-Lagrangian distribution. Therefore, we have
\begin{proposition}\label{verticalis1isotropic} $\widetilde{W}_M^q$ defines a $1$-isotropic distribution on $(\bigvee_q M, \widetilde{\Omega}_M^q).$
\end{proposition}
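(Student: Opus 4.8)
The plan is to unwind the definition of $1$-isotropic and reduce the statement to the already-established isotropy of the vertical distribution $W_M^{k+1-q}$ downstairs on the bundle of forms $\bigwedge^{k+1-q} M$. Recall that a distribution $\Delta$ is $1$-isotropic for a multisymplectic form $\Theta$ precisely when $\iota_{v \wedge w}\Theta = 0$ for every pair $v, w$ lying in the same fibre $\Delta_p$. Hence it suffices to fix a point $p \in \bigvee_q M$ together with two vertical vectors $v, w \in (\widetilde{W}_M^q)_p$, and to show that $\iota_{v \wedge w}\widetilde{\Omega}_M^q$ vanishes at $p$.

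I would exploit three ingredients already at hand: first, the very definition $\widetilde{\Omega}_M^q = (\flat_q)^\ast \Omega_M^{k+1-q}$; second, the fact recorded just before the statement that $W_M^{k+1-q}$ is $1$-Lagrangian, and hence (for $j=1$) $1$-isotropic, on $(\bigwedge^{k+1-q}M, \Omega_M^{k+1-q})$; and third, the bundle-map property $(\flat_q)_\ast \widetilde{W}_M^q \subseteq W_M^{k+1-q}$ coming from the commutative triangle over the identity of $M$.

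The core step is the naturality of the interior product under pullback, read pointwise. Evaluating on arbitrary $X_1, \dots, X_{k-q} \in T_p(\bigvee_q M)$ and abbreviating $f := \flat_q$,
$$\big(\iota_{v \wedge w}(f^\ast \Omega_M^{k+1-q})\big)(X_1, \dots, X_{k-q}) = \Omega_M^{k+1-q}\big(f_\ast v, f_\ast w, f_\ast X_1, \dots, f_\ast X_{k-q}\big),$$
which is exactly $\big(\iota_{f_\ast v \wedge f_\ast w}\Omega_M^{k+1-q}\big)(f_\ast X_1, \dots, f_\ast X_{k-q})$. Since $v, w$ are vertical, the third ingredient gives $f_\ast v, f_\ast w \in W_M^{k+1-q}$, and the $1$-isotropy of $W_M^{k+1-q}$ forces $\iota_{f_\ast v \wedge f_\ast w}\Omega_M^{k+1-q} = 0$. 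Thus the displayed expression vanishes for all $X_1, \dots, X_{k-q}$, giving $\iota_{v \wedge w}\widetilde{\Omega}_M^q = 0$ and hence $(\widetilde{W}_M^q)_p \subseteq \big((\widetilde{W}_M^q)_p\big)^{\perp, 1}$ at every $p$.

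I expect no genuine obstacle: the statement is essentially functoriality of $1$-isotropy under the fibre map $\flat_q$ covering $\mathrm{id}_M$, pulling back isotropy of $W_M^{k+1-q}$ to isotropy of its preimage $\widetilde{W}_M^q$. The only point requiring a moment's care is the pointwise bookkeeping in the naturality identity above, since the pushforward $f_\ast$ varies with the base point, so the relation should be read fibrewise at $p$ rather than as an identity of forms along $f$; this is routine and does not affect the argument. (Note that isotropy, unlike the Lagrangian property, transfers without any dimension count, so the fact that $(\flat_q)_\ast$ need not be surjective onto $W_M^{k+1-q}$ is irrelevant here.)
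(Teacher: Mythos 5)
Your proposal is correct and takes essentially the same route as the paper: the paper also deduces the result from the definition $\widetilde{\Omega}_M^q = (\flat_q)^\ast \Omega_M^{k+1-q}$, the bundle-map inclusion $(\flat_q)_\ast \widetilde{W}_M^q \subseteq W_M^{k+1-q}$, and the $1$-isotropy (via $1$-Lagrangianity) of $W_M^{k+1-q}$, leaving the fibrewise pullback contraction implicit. Your write-up simply makes that last pointwise computation explicit, which is exactly the intended argument.
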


Now we can prove the main result of this section.
\begin{theorem} Let $(M, \omega)$ be a multisymplectic manifold of order $k$. Then, a multivector field $$U: M \rightarrow \bigvee_q M$$ is locally Hamiltonian if and only if it defines a $(k +1 - q)$-Lagrangian submanifold in $(\bigvee_q M, \widetilde{\Omega}_M^q).$
\end{theorem}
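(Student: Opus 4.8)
The plan is to reduce everything to a single pullback identity and then to the isotropy/Lagrangian dictionary already set up for sections of bundles of forms. Write $m := k+1-q$ for the order of the multisymplectic structure $\widetilde{\Omega}_M^q$ on $\bigvee_q M$, and set $N := U(M)$, which is an embedded submanifold since $U$ is a section. The first and central step is the observation that composing the section $U$ with the bundle map $\flat_q$ recovers precisely the section $M \to \bigwedge^{k+1-q} M$ determined by the $(k+1-q)$-form $\iota_U \omega$, that is $\flat_q \circ U = \iota_U \omega$. Using $\widetilde{\Omega}_M^q = (\flat_q)^\ast \Omega_M^{k+1-q}$ together with the characterising property of the canonical form recalled in \cref{ex:formsmanifold} (any section $\beta$ satisfies $\beta^\ast \Omega_M^{k+1-q} = -d\beta$), I would compute
$$U^\ast \widetilde{\Omega}_M^q = (\flat_q \circ U)^\ast \Omega_M^{k+1-q} = (\iota_U\omega)^\ast \Omega_M^{k+1-q} = -d(\iota_U \omega).$$

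Next I would translate this into isotropy. Since $U$ is a diffeomorphism onto $N$, \cref{prop:characterizationisotropic} (applied to the inclusion of $N$, whose restriction along $U$ gives $U^\ast\widetilde{\Omega}_M^q$) shows that $N$ is $(k+1-q)$-isotropic if and only if $U^\ast \widetilde{\Omega}_M^q = 0$, which by the computation above is exactly $d(\iota_U\omega)=0$, i.e. $U$ is locally Hamiltonian. It then remains to upgrade ``isotropic'' to ``Lagrangian''. Because $N$ is the image of a section, along $N$ the tangent bundle splits as $T(\bigvee_q M) = TN \oplus \widetilde{W}_M^q$, and by \cref{verticalis1isotropic} the complement $\widetilde{W}_M^q$ is $1$-isotropic; hence, whenever $N$ is $(k+1-q)$-isotropic, applying \cref{isotropicdecomposition} pointwise to this splitting yields that $N$ is $(k+1-q)$-Lagrangian. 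For the converse implication I would use the purely linear fact that $(k+1-q)$-Lagrangian always forces $(k+1-q)$-isotropic: unwinding $T_x N = (T_xN)^{\perp, k+1-q} + \ker \flat_1$ and writing $a = a' + a''$ with $a' \in (T_xN)^{\perp,k+1-q}$ and $a'' \in \ker \flat_1$, both $\widetilde{\Omega}_M^q(a', \cdot)$ and $\widetilde{\Omega}_M^q(a'', \cdot)$ vanish on $T_x N$, so $T_xN \subseteq (T_xN)^{\perp,k+1-q}$. Chaining these equivalences gives ``locally Hamiltonian $\iff (k+1-q)$-isotropic $\iff (k+1-q)$-Lagrangian''.

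The main obstacle I anticipate is the possible degeneracy of $\widetilde{\Omega}_M^q$, which persists even when $\omega$ itself is non-degenerate (as flagged in the remark preceding \cref{verticalis1isotropic}). This is exactly what prevents a direct imitation of \cref{prop:KLagrangianclosed}, where the base form is non-degenerate and the vertical distribution is genuinely $1$-Lagrangian; here \cref{verticalis1isotropic} only provides $1$-\emph{isotropy}. The argument must therefore be arranged so that the kernel $\ker\flat_1$ is absorbed at every stage, and in particular the converse direction cannot rely on the clean identity $W = W^{\perp}$ of the regular case but must instead invoke the general ``Lagrangian $\Rightarrow$ isotropic'' fact valid with nonzero kernel. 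Beyond this, the only routine care needed is to verify that the splitting $T(\bigvee_q M)|_N = TN \oplus \widetilde{W}_M^q$ and the linear lemma \cref{isotropicdecomposition} are applied genuinely fibrewise at each point of $N$.
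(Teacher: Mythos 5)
Your proposal is correct and takes essentially the same route as the paper: the same splitting $T\left(\bigvee_q M\right)\big|_{U(M)} = U_\ast(TM) \oplus \widetilde{W}_M^q$, the same appeal to \cref{verticalis1isotropic} and \cref{isotropicdecomposition} to reduce Lagrangian to isotropic, and the identical key computation $U^\ast \widetilde{\Omega}_M^q = (\iota_U\omega)^\ast \Omega_M^{k+1-q} = -d\iota_U\omega$. The only (welcome) difference is that you make explicit the elementary linear-algebra verification that $(k+1-q)$-Lagrangian forces $(k+1-q)$-isotropic in the presence of $\ker\flat_1$, a step the paper compresses into the phrase ``or, equivalently.''
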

\begin{proof} With \cref{isotropicdecomposition} in mind, since $\widetilde{W}_M^q$ is $1$-isotropic by \cref{verticalis1isotropic}, and we have the decomposition $$T\left(\bigvee_q M \right)\bigg|_{U(M)} = U_\ast(TM) \oplus \widetilde{W}_M^q\big |_{U(M)},$$ we only need to check wether $U$ defines a $(k+1 -q)$-isotropic submanifold or, equivalently, wether $$U^\ast \widetilde{\Omega}_M^{q} = 0.$$
\[\begin{tikzcd}
	{\bigvee_qM} && {\bigwedge^{k +1 -q }M} \\
	\\
	M
	\arrow["{\flat_q}", from=1-1, to=1-3]
	\arrow["U", from=3-1, to=1-1]
	\arrow["{\flat_q(U) = \iota_U \omega}"', curve={height=6pt}, from=3-1, to=1-3]
\end{tikzcd}\]
We have that 
\begin{align*}
    U^\ast \widetilde{\Omega}_M^{q} &= U^\ast \flat_q^\ast \Omega_M^{k +1 - q} = (\flat_q \circ U)^\ast \Omega_M^{k +1 - q}\\
    &= (\iota_U \omega)^\ast \Omega_M^{k +1 - q} = - d \iota_U \omega,
\end{align*}
where in the last equality we have used that $\alpha^\ast \Omega^k_Q = - d \alpha,$ for any form $\alpha: Q \rightarrow \bigwedge^k Q$. We conclude that $U$ is $k$-Lagrangian if and only if $$0 = U^\ast \widetilde{\Omega}_M^q = - d \iota_U \omega,$$ that is, if and only if $U$ is locally Hamiltonian.
\end{proof}

\subsection{Complete lift of forms to multivector bundles}\label{Subsection:Completelift}
In \cite{Ibort1999OnTG}, the authors prove that $(TM, \omega^c)$ is a non-degenerate multisymplectic manifold when $\omega$ is a non-degenerate multisymplectic form on $M$. Here $\omega^c$ denotes the complete lift of the form. We would like to generalize this procedure to arbitrary multivector bundles $$\bigvee_q M.$$ Let us begin by recalling that $\omega^c$ is the unique $(k+1)$-form on $TM$ satisfying $$X^\ast \omega^c = \pounds_X \omega,$$ for every vector field $$X: M \rightarrow TM.$$ Recalling the Cartan formula $$\pounds_X \omega = d \iota_X \omega + \iota_X d \omega,$$ we define the Lie derivative of a $\omega$ with respect to a multivector field $$U: M \rightarrow \bigvee_q M$$ as the $(k + 2 - q)$-form (see \cite{TulzcyjewLieDerivative}) $$\pounds_U \omega := \iota_U d \omega + (-1)^{q+1} d \iota_U \omega.$$

\begin{theorem}[Definition of complete lift]\label{thm:completelift} Given a manifold $M$, and $\omega \in \Omega^{k +1}(M)$, there exists an unique $(k + 2 - q)$-form on $\bigvee_q M,$ $\omega^c_q$, such that $$U ^\ast \omega^c_q = \pounds_U \omega,$$ for every multivector field $$U: M \rightarrow \bigvee_q M.$$
\end{theorem}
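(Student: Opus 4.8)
The plan is to build $\omega^c_q$ by pulling back the canonical (tautological) forms of \cref{ex:formsmanifold} along two contraction maps, thereby reducing the whole statement to the characterizing identities $\beta^\ast \Theta^j_M = \beta$ and $\beta^\ast \Omega^j_M = -d\beta$ that those forms satisfy on sections. Concretely, alongside the contraction map $\flat_q : \bigvee_q M \to \bigwedge^{k+1-q} M$, $u \mapsto \iota_u \omega$, I would introduce the analogous map for the $(k+2)$-form $d\omega$,
$$\flat_q^{d\omega} : \bigvee_q M \to \bigwedge^{k+2-q} M, \qquad u \mapsto \iota_u\, d\omega.$$
Both are bundle maps over $M$, so for any section $U$ one has $\flat_q \circ U = \iota_U \omega$ and $\flat_q^{d\omega} \circ U = \iota_U\, d\omega$ as sections of the respective bundles of forms. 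I then define
$$\omega^c_q := \big(\flat_q^{d\omega}\big)^\ast \Theta^{k+2-q}_M + (-1)^q\, \flat_q^\ast\, \Omega^{k+1-q}_M,$$
which is a $(k+2-q)$-form on $\bigvee_q M$, since $\Theta^{k+2-q}_M$ has degree $k+2-q$ and $\Omega^{k+1-q}_M = -d\Theta^{k+1-q}_M$ also has degree $k+2-q$.

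For existence it remains only to check the defining property. Using $(\flat_q^{d\omega}\circ U)^\ast \Theta^{k+2-q}_M = \flat_q^{d\omega}\circ U = \iota_U\, d\omega$ and $(\flat_q \circ U)^\ast \Omega^{k+1-q}_M = -d(\flat_q \circ U) = -d\iota_U \omega$, functoriality of pullback gives
$$U^\ast \omega^c_q = \iota_U\, d\omega + (-1)^q\big(-d\iota_U \omega\big) = \iota_U\, d\omega + (-1)^{q+1} d\iota_U \omega = \pounds_U \omega,$$
which is exactly the required identity. As a sanity check, when $\omega$ is closed the first term vanishes (the tautological form vanishes on the zero section) and the expression collapses to $\omega^c_q = (-1)^q \widetilde{\Omega}^q_M$, consistent with the construction preceding this subsection.

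The real content is uniqueness, which I expect to be the main obstacle. Suppose $\eta$ is a $(k+2-q)$-form on $\bigvee_q M$ with $U^\ast \eta = 0$ for every section $U$; I must show $\eta = 0$. I would argue in canonical coordinates $(x^i, u^I)$, writing $\eta = \sum_{A,B} f_{A,B}(x,u)\, dx^A \wedge du^B$. Since pullback replaces $du^I$ by $\partial_j U^I\, dx^j$, the value $(U^\ast \eta)_{x_0}$ is a polynomial in the first derivatives $\partial_j U^I(x_0)$ whose coefficients are the $f_{A,B}$ evaluated at $(x_0, U(x_0))$. The crucial input is that the value $U(x_0)$ and the full first jet $\partial_j U^I(x_0)$ of a section may be prescribed independently; letting these vary and forcing the resulting polynomial to vanish identically drives every $f_{A,B}(x_0,\xi)$ to zero, and hence $\eta = 0$. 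The one delicate point, which I would treat by induction on the number $|B|$ of vertical legs, is disentangling the different multi-indices $(A,B)$ contributing to the same $dx$-monomial after pullback; this separation is precisely where the free choice of the $1$-jet, rather than merely of the value $U(x_0)$, is indispensable, since a single section detects $\eta$ only on the horizontal subspace $U_\ast(T_{x_0} M)$ and the vertical directions are recovered only by varying the derivative of $U$. Uniqueness then shows $\omega^c_q$ is independent of all choices, completing the proof.
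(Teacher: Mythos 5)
Your construction of $\omega^c_q$ and its verification coincide exactly with the paper's proof: the same formula $\omega^c_q = \big(\flat_q^{d\omega}\big)^\ast \Theta^{k+2-q}_M + (-1)^q\, \flat_q^\ast\, \Omega^{k+1-q}_M$, the same signs, and the same use of the tautological identities $\beta^\ast \Theta = \beta$ and $\beta^\ast \Omega = -d\beta$ on sections. Your uniqueness sketch --- reduce to the fibrewise linear problem by freely prescribing the $1$-jet of a section, then disentangle the coefficients by induction on the number of vertical legs --- is precisely the coordinate version of the paper's \cref{lemma:completelift}, which carries out that same induction abstractly (wedges of images of linear sections of an epimorphism span the full exterior power of the total space, under the same implicit dimension condition $k+2-q \leq \dim M$), so the two proofs are essentially identical.
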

To prove uniqueness, it suffies to study the linear problem. 

\begin{lemma}\label{lemma:completelift} Let $X, Y$ be vector spaces and $\pi: Y \rightarrow X$ be an epimorphism. Then, if $k +1 \leq \dim X$, a form $\omega \in \bigwedge^{k +1} Y^\ast$ is characterized by the pull-backs of all sections $$\phi: X \rightarrow Y.$$ That is, if there is another $(k +1)$-form $\alpha$ on $Y$ such that $\phi^\ast \alpha = \phi^\ast \omega$, for every section $\phi: X \rightarrow Y$ of $\pi,$ then $$\alpha = \omega.$$
\end{lemma}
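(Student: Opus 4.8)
The plan is to reduce the statement to showing that the difference $\beta := \alpha - \omega$ is zero, given that $\phi^\ast \beta = 0$ for every section $\phi$ of $\pi$. Set $K := \ker \pi$ and fix one section $\phi_0$, so that $L := \phi_0(X)$ is a complement of $K$ in $Y$ and $\pi|_L : L \to X$ is an isomorphism; thus $Y = L \oplus K$. The first observation is that the sections of $\pi$ are exactly the maps $\phi_0 + \psi$ with $\psi \in \mathrm{Hom}(X, K)$ arbitrary (equivalently, their images are exactly the complements of $K$), since $\pi \circ (\phi_0 + \psi) = \mathrm{id}_X + \pi \circ \psi = \mathrm{id}_X$ precisely when $\psi$ takes values in $K$.

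Next I would use the splitting $Y = L \oplus K$ to write the induced bigrading
$$\bigwedge^{k+1} Y^\ast = \bigoplus_{j=0}^{k+1} \bigwedge^{k+1-j} L^\ast \wedge \bigwedge^{j} K^\ast,$$
and decompose $\beta = \sum_{j} \beta_j$ accordingly. Since a decomposable form in the $j$-th summand is nonzero only on tuples having exactly $j$ entries in $K$ and $k+1-j$ entries in $L$ (the $L^\ast$-factors vanish on $K$ and the $K^\ast$-factors vanish on $L$), proving $\beta = 0$ amounts to showing that each $\beta_j$ vanishes, i.e. that $\beta(\kappa_1, \dots, \kappa_j, l_1, \dots, l_{k+1-j}) = 0$ for all $\kappa_s \in K$ and $l_t \in L$.

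The core of the argument is a scaling trick. Given targets $\kappa_1, \dots, \kappa_j \in K$ and $l_1, \dots, l_{k+1-j} \in L$, the antisymmetry of $\beta$ lets me assume the $l_t$ are linearly independent; writing $l_t = \phi_0(a_t)$ and using $k+1 \le \dim X$, I extend $a_1, \dots, a_{k+1-j}$ by vectors $x_1, \dots, x_j$ to a linearly independent family of size $k+1$ in $X$. Choose $\psi \in \mathrm{Hom}(X,K)$ with $\psi(x_s) = \kappa_s$ and $\psi(a_t) = 0$, and consider the one-parameter family of sections $\phi_t := \phi_0 + t\psi$. Evaluating $\phi_t^\ast \beta = 0$ on $(x_1, \dots, x_j, a_1, \dots, a_{k+1-j})$ gives, for every $t$, a vanishing polynomial in $t$, because $\phi_t(x_s) = \phi_0(x_s) + t\kappa_s$ while each $\phi_t(a_t) = l_t$ is constant. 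Reading off the coefficient of $t^{j}$ (the only monomial that can pick up a $\kappa$ from each of the first $j$ slots, the remaining slots being $t$-independent) yields exactly $\beta(\kappa_1, \dots, \kappa_j, l_1, \dots, l_{k+1-j}) = 0$, as desired.

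I expect the main obstacle to be purely bookkeeping: tracking the bigrading and verifying that, with the chosen $\psi$, the coefficient of $t^j$ isolates a single term. The role of the hypothesis $k+1 \le \dim X$ is exactly to guarantee enough linearly independent vectors in $X$ to realize the prescribed $\kappa_s$ and $l_t$ simultaneously; without it the components $\beta_j$ for large $j$ could not be probed. Once every $\beta_j$ is shown to vanish, $\beta = 0$ and hence $\alpha = \omega$.
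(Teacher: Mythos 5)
Your proof is correct, and it reaches the conclusion by a mechanism different from the paper's. The paper dualizes the problem: it observes that $\omega$ is determined by the induced linear map on $\bigwedge^{k+1}Y$, and then proves that $\bigwedge^{k+1}Y$ is \emph{spanned} by the pushforwards $\phi_\ast\bigl(\bigwedge^{k+1}X\bigr)$ over all sections $\phi$. To do this it uses the same splitting $Y = X \oplus \ker\pi$ and bigrading that you use, but it isolates the mixed components $x_1\wedge\cdots\wedge x_l\wedge y_{l+1}\wedge\cdots\wedge y_{k+1}$ by choosing sections of the form $\phi(x_i) = x_i + y_i$ (coefficient $1$) and running an iterated difference/induction on the number of kernel vectors (``iterating this argument''). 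You instead work directly with the difference form $\beta = \alpha - \omega$ and kill each bigraded component $\beta_j$ in a single stroke via the one-parameter family $\phi_t = \phi_0 + t\psi$: since $\phi_t^\ast\beta = 0$ for all $t$, the evaluation is a polynomial in $t$ vanishing identically, and the coefficient of $t^j$ is exactly $\beta(\kappa_1,\dots,\kappa_j,l_1,\dots,l_{k+1-j})$. This polarization trick replaces the paper's induction, makes the role of the hypothesis $k+1 \le \dim X$ (room to extend to $k+1$ independent vectors in $X$) equally transparent, and your preliminary reductions (sections are exactly $\phi_0 + \psi$ with $\psi \in \mathrm{Hom}(X,\ker\pi)$; linearly dependent $l_t$'s give zero automatically by antisymmetry) are all sound. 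What the paper's formulation buys in exchange is a slightly stronger, reusable statement — an explicit generating set of $\bigwedge^{k+1}Y$ — while yours is the more economical route to the uniqueness assertion itself; the only (standard) caveat in your version is that extracting coefficients from a vanishing polynomial uses that the ground field is infinite, which is automatic here.
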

\begin{proof} It is clear that $\omega$ is characterized by the induced linear map $$\omega: \bigwedge^{k+1} Y \rightarrow \mathbb{R},$$ and that, if $\phi^\ast \alpha = \phi^\ast \omega$, for certain form $\alpha \in \bigwedge^{k+1} Y^\ast$, the following diagram commutes.
\[\begin{tikzcd}
	{\bigwedge^{k+1}Y} & {\mathbb{R}} \\
	{\bigwedge^{k+1}X}
	\arrow["\omega", from=1-1, to=1-2]
	\arrow["{\pi_\ast}"', from=1-1, to=2-1]
	\arrow["{\phi_\ast}", curve={height=-18pt}, dashed, from=2-1, to=1-1]
	\arrow["{\phi^\ast\alpha}"', from=2-1, to=1-2]
\end{tikzcd}.\]
Therefore, if we can prove that $$\bigwedge^{k+1} Y = \left \langle \phi_\ast\left( \bigwedge^{k+1} X\right), \,\, \phi: X \rightarrow Y \text{ section }  \right\rangle,$$ we would have $\omega = \alpha,$ since they would coincide in a set of generators. Identify $X$ as a subspace of $Y$. We have
\begin{align*}
    \bigwedge^{k+1} Y = \bigwedge^{k+1}(X \oplus \ker \pi) = \bigoplus_{l = 0}^{k +1} \left(\bigwedge^{l} X \wedge \bigwedge^{k +1 - l} \ker \pi\right).
\end{align*}
We will prove that $$\bigwedge^{l} X \wedge \bigwedge^{k + 1 -l} \ker \pi \subseteq \left \langle \phi_\ast\left( \bigwedge^{k+1} X\right), \,\, \phi: X \rightarrow Y \text{ section }  \right\rangle.$$ Let $$x_1 \wedge \cdots \wedge x_{l} \wedge y_{l+1} \wedge \cdots \wedge y_{k+1} \in \bigwedge^{l} X \wedge \bigwedge^{k +1 -l} \ker \pi,$$ where $x_i \in X$, $y_j \in \ker \pi$ are linearly independent vectors. Extend $x_1, \dots, x_{k +1 - l}$ to $k +1$ linearly independent vectors on $X$ (here we are using $\dim X \geq k+1$), $$x_1, \dots, x_{k+1}$$ and  take a section $\phi: X \rightarrow Y$ such that $$\phi(x_i) = x_i, i = 1, \dots, k, \,\, \phi(x_{k+1}) = x_{k +1} + y_{k+1}.$$ Then 
\begin{align*}
    &x_1 \wedge \cdots \wedge x_{k} \wedge y_{k+1} = \\
    &\phi_\ast (x_{1} \wedge \cdots \wedge x_{k+1}) - x_{1} \wedge \cdots \wedge x_{k+1} \in \left \langle \phi_\ast\left( \bigwedge^{k+1} X\right), \,\, \phi: X \rightarrow Y \text{ section }  \right\rangle.
\end{align*}
With a similar argument we can show that $$x_1\wedge\cdots \wedge x_{k -1} \wedge y_k \wedge x_{k +1} \in \left \langle \phi_\ast\left( \bigwedge^{k+1} X\right), \,\, \phi: X \rightarrow Y \text{ section }  \right\rangle.$$ Now, defining another section (which we name the same making abuse of notation) $\phi$ satisfying $$\phi(x_i) = x_i, \, i= 1, \dots, k -1,\,\, \phi(x_{k}) = x_k + y_k, \phi(x_{k+1}) = x_{k+1} + y_{k+1},$$ we have
\begin{align*}
    \phi_\ast(x_1 \wedge \cdots \wedge x_{k+1}) = x_1 \wedge \cdots \wedge x_{k-1} \wedge (x_k + y_k) \wedge (x_{k+1} + y_{k+1})
\end{align*}
which, by the previous considerations implies 
$$x_1 \wedge \cdots \wedge x_{k-1} \wedge y_{k} \wedge y_{k+1}\in \left \langle \phi_\ast\left( \bigwedge^{k+1} X\right), \,\, \phi: X \rightarrow Y \text{ section }  \right\rangle.$$ Now, iterating this argument we conclude $$x_1 \wedge \cdots \wedge x_{l} \wedge y_{l+1} \wedge \cdots \wedge y_{k+1} \in \left \langle \phi_\ast\left( \bigwedge^{k+1} X\right), \,\, \phi: X \rightarrow Y \text{ section }  \right\rangle,$$ proving the result.
\end{proof}
\begin{proof}[Proof of \cref{thm:completelift}] By \cref{lemma:completelift}, if we find a form $\omega^c_q$ on $\bigvee_q M$ satisfying $$U^\ast \omega^c_q = \pounds_U \omega,$$ the result would follow. Consider the induced maps by $\omega$ and $d \omega$ on $\bigvee_q M$,
\[\begin{tikzcd}
	&& {\bigwedge^{k+2 - q}M} \\
	{\bigvee_q M} && {\bigwedge^{k+1 - q}M}
	\arrow["{\widetilde \flat_q:= \iota_{\bullet} d \omega}", curve={height=-6pt}, from=2-1, to=1-3]
	\arrow["{\flat_q: = \iota_{\bullet} \omega}"', from=2-1, to=2-3]
\end{tikzcd},\]
and define a $(k +2 - q)$-form on $\bigvee_q M$ by $$\omega^c_q := (\widetilde{\flat}_q)^\ast \Theta_M^{k + 2 - q} + (-1)^q (\flat_q)^\ast \Omega_M^{k + 1 - q}.$$ Then, by definition of $\Theta_M^{k +2 -q},$ and $\Omega_M^{k +1 - q}$ we have that for all multivector fields $U: M \rightarrow \bigvee_q M,$
\begin{align*}
    U^\ast \omega^c_q = \iota_U d \omega + (-1)^{q+ 1} d \iota_U \omega  = \pounds_U \omega,
\end{align*}
finishing the proof.
\end{proof}
\begin{remark}
    Now that we have generalized the complete lift of forms to arbitrary multivector bundles, given a multisymplectic manifold $(M, \omega)$ we have two ways of inducing a multisymplectic structure on $\bigvee_q M,$ the one constructed in \cref{Subsection:HamiltonianAsLagrangian}, and the complete lift from \cref{thm:completelift}. However, because $\omega$ is closed, the map $\widetilde{\flat}_q$ of the proof of \cref{thm:completelift} is trivial and thus, $$\omega^c_q = (-1)^q(\flat_q)^\ast \Omega_M^{k +1 - q} = (-1)^q \widetilde{\Omega}_M^q$$ and we conclude that, up to sign, both multisymplectic structures are equal.
\end{remark}

\section{Coisotropic submanifolds}\label{section:CoisotropicSubmanifolds}
\subsection{Local form of coisotropic submanifolds}\label{LocalFormSection}
Weinstein gave the first normal form\footnote{Along this paper, we reserve the term \textit{normal form} for a classification of a neighborhood of an entire submanifold (like in \cref{NeighborhoodTheorem}, \cref{LagrangianFormMultisymplectic}), and we use the term \textit{local form} for a classification of a neighborhood around any point of a submanifold (like in \cref{localformcoisotropic}).} theorem for Lagrangian submanifolds in the context of symplectic geometry.

\begin{theorem}[\cite{WeinsteinLagrangianNeighborhood} Weinstein's Lagrangian neighborhood Theorem]\label{NeighborhoodTheorem} Let $(M, \omega)$ be a symplectic manifold and $L \hookrightarrow M$ be a Lagrangian submanifold. Then there are neighborhoods $U$, $V$ of $L$ in $M$, and in $T^\ast L$ (identifying $L$ with the zero section) respectively, and a symplectomorphism $$\phi: U \rightarrow V.$$
\end{theorem}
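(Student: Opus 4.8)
The plan is to reduce the statement to a comparison of two symplectic forms on a neighborhood of the zero section of $T^\ast L$ and then apply Moser's deformation method. First I would use $\omega$ to identify the normal bundle. Since $L$ is Lagrangian, at each $p \in L$ one has $T_pL = (T_pL)^{\perp}$ for the symplectic orthogonal, so the assignment $v \mapsto (\iota_v \omega)|_{T_pL}$ kills $T_pL$ and descends to a bundle map $NL := TM|_L / TL \to T^\ast L$, which is an isomorphism by non-degeneracy and dimension count. To make the later normalization work, I would first fix a \emph{Lagrangian} subbundle $N \subseteq TM|_L$ complementary to $TL$ (obtained, say, from an $\omega$-compatible almost complex structure $J$ by taking $N = J(TL)$), so that $TM|_L = TL \oplus N$ with $N \cong T^\ast L$ via $\omega$. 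Building a tubular neighborhood diffeomorphism out of this splitting yields a map $\psi$ from a neighborhood $U_0$ of $L$ in $M$ onto a neighborhood of the zero section in $T^\ast L$, equal to the identity on $L$ and inducing the above isomorphism on normal directions.

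Pushing $\omega$ forward, I then have on a neighborhood of the zero section two closed, non-degenerate $2$-forms: $\omega_1 := \psi_\ast \omega$ and the canonical form $\omega_0$ on $T^\ast L$. The zero section is Lagrangian for both (for $\omega_0$ by construction, for $\omega_1$ because $L$ is Lagrangian in $M$). The key point, and the step I expect to require the most care, is that the splitting was chosen precisely so that $\omega_0 = \omega_1$ as bilinear forms at every point of $L$, not merely that $L$ is isotropic for both: at $p$ the space $T_p(T^\ast L) = T_pL \oplus T_p^\ast L$ decomposes into two complementary Lagrangians, and the linear symplectic normal form for a Lagrangian together with a complementary Lagrangian forces the two forms to coincide there once the identification $N \cong T^\ast L$ is the one induced by $\omega$.

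Next I would invoke the relative Poincaré lemma: since $\omega_1 - \omega_0$ is closed and vanishes identically along $L$, there is a $1$-form $\sigma$ on a neighborhood of $L$ with $d\sigma = \omega_1 - \omega_0$ and $\sigma|_L = 0$. Consider the path $\omega_t := \omega_0 + t(\omega_1 - \omega_0)$ for $t \in [0,1]$. Each $\omega_t$ is closed, and because $\omega_0 = \omega_1$ along $L$ each $\omega_t$ restricts to $\omega_0$ there and is non-degenerate on $L$; since non-degeneracy is an open condition and $[0,1]$ is compact, $\omega_t$ is non-degenerate on a common neighborhood of $L$ for all $t$.

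Finally I apply Moser's trick. Define the time-dependent vector field $X_t$ by $\iota_{X_t} \omega_t = -\sigma$, which is possible since each $\omega_t$ is non-degenerate, and let $\phi_t$ be its flow. Because $\sigma$ vanishes on $L$, so does $X_t$, hence $L$ is pointwise fixed and the flow is defined on a neighborhood of $L$ for all $t \in [0,1]$. Using Cartan's formula, $d\omega_t = 0$, and $\tfrac{d}{dt}\omega_t = \omega_1 - \omega_0 = d\sigma$, one computes
$$\frac{d}{dt}\left(\phi_t^\ast \omega_t\right) = \phi_t^\ast\left(\pounds_{X_t}\omega_t + \frac{d}{dt}\omega_t\right) = \phi_t^\ast\left(d\,\iota_{X_t}\omega_t + d\sigma\right) = \phi_t^\ast\left(-d\sigma + d\sigma\right) = 0.$$
Thus $\phi_1^\ast \omega_1 = \omega_0$, so $\phi_1$ is a symplectomorphism between $(\cdot,\omega_0)$ and $(\cdot,\omega_1)$, and the composite $\phi := \phi_1^{-1} \circ \psi$, suitably restricted, is the desired symplectomorphism from a neighborhood $U$ of $L$ in $M$ onto a neighborhood $V$ of the zero section in $T^\ast L$. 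The principal difficulty throughout is securing the normalization $\omega_0 = \omega_1$ along $L$ via the Lagrangian-complement construction; once that is in place, the deformation argument is routine.
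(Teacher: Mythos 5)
Your proof is correct and follows essentially the same route the paper takes: the paper cites Weinstein for this statement, but its own proof of the multisymplectic generalization (\cref{LagrangianFormMultisymplectic}) uses exactly your scheme --- a tubular neighborhood built from the $\omega$-induced identification of a Lagrangian complement of $TL$ with $T^\ast L$, normalization so that the two symplectic forms agree pointwise along $L$, the relative Poincar\'e lemma to get a primitive vanishing on $L$, and Moser's trick with a vector field that vanishes along $L$. The only ingredient specific to the symplectic case is your construction of the Lagrangian complement via a compatible almost complex structure, which in the paper's setting is instead supplied by the given $1$-Lagrangian distribution $W$.
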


This result has been generalized to multisymplectic manifolds of type $(k,0)$ by G. Martin \cite{Martin1988ADT}, and extended to multiysmplectic manifolds of type $(k,r)$ by M. de Leon et al. \cite{deleon2003tulczyjews}.

\begin{theorem}[\cite{deleon2003tulczyjews}]\label{LagrangianFormMultisymplectic}\label{normalformlagrangian} Let $(M, \omega, W, \mathcal{E})$ be a multisymplectic manifold of type $(k,r)$, and $L \hookrightarrow M$ be a $k$-Lagrangian submanifold complementary to $W$, that is, such that $$TL \oplus W \big |_L = TM \big |_L.$$ Then there are neighborhoods $U$, $V$ of $L$ in $M$, and of $L$ in $\bigwedge^k_r L$ (identifying $L$ as the zero section), where the horizontal forms are taken with respect to $\mathcal{E}$ under the identification $$TL = TM/ W,$$ and a multisymplectomorphism $$\psi: U \rightarrow V,$$ which is the identity on $L$ and satisfies $$\psi_\ast W = W^k_L,$$ where $W_L^k$ denotes the vertical distribution on $\bigwedge^k_r L^\ast.$
\end{theorem}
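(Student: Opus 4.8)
The plan is to follow the classical Weinstein strategy: first build a diffeomorphism between tubular neighborhoods that is correct to first order along $L$, and then correct it to an honest multisymplectomorphism by a Moser deformation argument adapted to the degeneracy of the multisymplectic form. The entire difficulty of the multisymplectic case, as opposed to the symplectic one, is concentrated in the Moser step, where contraction by a vector field is no longer an isomorphism.

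First I would produce the underlying diffeomorphism $\psi_0$. Since $L$ is complementary to $W$, the normal bundle of $L$ in $M$ is canonically $W|_L$, and $TM|_L = TL \oplus W|_L$. Because $(T_xM, \omega_x, W_x, \mathcal{E}_x)$ is of type $(k,r)$, the map $W_x \to \bigwedge^k_r (T_xM/W_x)^\ast$, $\alpha \mapsto \iota_\alpha \omega$ descended to the quotient, is a linear isomorphism (this is exactly the map $\phi$ of \cref{prop: characterization} and \cref{thm:type(kr)}, using that $W$ is $1$-Lagrangian). Under the identification $TM/W \cong TL$ along $L$, this assembles into a vector bundle isomorphism $W|_L \cong \bigwedge^k_r(TL)^\ast$ covering the identity of $L$; equivalently $TM|_L \cong T(\bigwedge^k_r L)|_L$ sending $W|_L$ to $W^k_L|_L$. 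Choosing tubular neighborhood maps on both sides (the model side using the vector bundle structure $\bigwedge^k_r L \to L$) and transporting this isomorphism, I obtain a diffeomorphism $\psi_0 : U_0 \to V_0$ of neighborhoods of $L$, equal to the identity on $L$, whose differential along $L$ realizes the above isomorphism and in particular sends $W$ to $W^k_L$ along $L$.

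Next I pass to the model and deform. Set $\omega_0 := (\psi_0)_\ast \omega$ and $\omega_1 := \Omega_L$ on $V_0 \subseteq \bigwedge^k_r L$; both are closed $(k+1)$-forms. The crucial point of the first step is that the fibrewise isomorphism above is, by \cref{prop: characterization}, a multisymplectomorphism at each point of $L$, so $\omega_0$ and $\omega_1$ agree along $L$. Hence $\omega_1 - \omega_0$ is closed and vanishes on $L$, and the relative Poincaré lemma for the retraction of $\bigwedge^k_r L$ onto its zero section gives a primitive $\beta$ with $\omega_1 - \omega_0 = d\beta$ and $\beta|_L = 0$. I then set $\omega_t := \omega_0 + t\, d\beta$ and seek a time-dependent vector field $X_t$ with $\iota_{X_t}\omega_t = -\beta$; since the $\omega_t$ are closed, Cartan's formula gives $\frac{d}{dt}(\psi_t^\ast \omega_t) = \psi_t^\ast\!\left(d\iota_{X_t}\omega_t + d\beta\right) = 0$, so the flow $\psi_t$ of $X_t$ satisfies $\psi_1^\ast \omega_1 = \omega_0$, and $\psi := \psi_1 \circ \psi_0$ is the desired multisymplectomorphism.

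The hard part is solving $\iota_{X_t}\omega_t = -\beta$, which is exactly where the lack of non-degeneracy bites: $\flat_1$ is not surjective, so an arbitrary $k$-form admits no primitive vector field. This is resolved by \cref{lemma1}: forms lying in $\bigwedge^k_{1,r}$ (horizontal with respect to both $W^k_L$ and $\mathcal{E}$) are in the image of $\flat_1$, and since $\omega_t$ is regular the preimage is unique. Thus the argument hinges on arranging that $\beta$ takes values in $\bigwedge^k_{1,r}$; because $\omega_0$ and $\omega_1$ are both of type $(k,r)$, I expect the homotopy operator built from the fibre-scaling retraction to preserve this horizontal subbundle, so that $\beta$ is of the required type. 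Granting this, inverting $\flat_1$ via \cref{lemma1} yields a unique $X_t$, which one checks is vertical (its contraction with $\omega_t$ is $1$-horizontal) and vanishes on $L$ (since $\beta|_L = 0$ and $\flat_1$ is injective). Consequently the Moser flow fixes $L$ pointwise and preserves $W^k_L$, so $\psi$ is the identity on $L$ and satisfies $\psi_\ast W = W^k_L$. I expect the verification that $\beta$ can be chosen horizontal, and hence that $X_t$ is vertical, to be the main technical obstacle.
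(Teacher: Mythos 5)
Your proposal is correct and follows essentially the same route as the paper's proof: the fibrewise isomorphism $w \mapsto (\iota_w \omega)|_L$ combined with a tubular neighborhood, agreement of the two forms along $L$, a Moser deformation along the linear interpolation, and \cref{lemma1} to invert $\flat_1$ by a vertical vector field on forms that are $1$-horizontal (with respect to the vertical distribution) and $r$-horizontal (with respect to $\mathcal{E}$). The step you leave as an expectation---that the fibre-scaling homotopy operator produces a primitive lying in $\bigwedge^k_{1,r}$---is precisely what the paper verifies (it works with separate primitives $\widetilde\theta$ and $\Theta^k_L$ of the two forms rather than a primitive of their difference, but the check is identical): the dilation field and the pushforwards of vertical fields under the scalings $\pi_t$ are vertical, and contraction of either form with two vertical vectors, or with one vertical vector together with $r$ vectors projecting into $\mathcal{E}$, vanishes, which gives the required horizontality and hence the vertical Moser field.
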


\begin{proof} Define the vector bundle isomorphism $$\phi: W|_L \rightarrow \bigwedge^k_r L; \,\, \phi(w_l) := (\iota_{w_l} \omega) |_L.$$ By the tubular neighborhood theorem, we may identify a neighborhood $U$ of $L$ in $W|_L$ with a neighborhood of $L$ in $M$. Under the previous identificaction, let $V := \phi(U)$ and define $$\widetilde{\omega} := \phi_\ast \omega.$$ Following the same line of reasoning as in \cref{prop: characterization}, we have $\widetilde{\omega} = \Omega^k_L$ on $L$. Furthermore, since $\phi$ is a vector bundle isomorphism, $\phi$ preserves fibers and we have $\phi_\ast W|_U = (W_L^k)|_V.$ This implies that $W_L^k$ not only defines a $1$-isotropic distribution for $\Omega^k_L$, but also for $\widetilde{\omega}.$ To build the multisymplectomorphism $\psi$, we will make use of \textit{Moser's trick} with the family of forms $$\Omega_t:= (1-t) \Omega_L^k + t \widetilde \omega.$$ More precisely, we will look for a time dependent vector field $X_t$ on $V$ such that its flow $\phi_t$ satisfies $$\phi_t ^\ast \Omega_t = \Omega^k_L,$$ for every $t$. To achieve this, it will be sufficient to look for a time dependent vector field $X_t$ such that $$0 = \dv{t}\left(\phi^\ast_t \Omega_t\right) = \pounds_{X_t} \Omega_t + \dv{\Omega_t}{t} = d \iota_{X_t} \Omega_t + \widetilde \omega - \Omega^k_L.$$ Now, if we denote by $\pi_t$ multiplication by $t$ in $\bigwedge^k_r L$, by reducing neighborhoods if necessary, we get a well defined map $$\pi_t : V \rightarrow V,$$ for $0 \leq t \leq 1$. By the relative Poincaré Lemma, we have $$\widetilde \omega = d \left( \int_0^1 \pi_t^\ast \iota_\Delta \widetilde{\omega} dt\right),$$ where $\Delta$ is the dilation vector field. Therefore, if we define $$\widetilde \theta := - \int_0^1 \pi_t^\ast \iota_{\Delta} \widetilde{\omega} dt,$$ it follows that $\widetilde \omega = - d \widetilde \theta,$ where $\widetilde \theta = 0$ on $L$ (because $\Delta = 0$ on $L$). Since we need $ \Omega ^k_L - \widetilde \omega = - d (\Theta^k_L - \widetilde \theta) = d\iota_{X_t} \Omega_t,$ it will be enough to look for $X_t$ satisfying $$\iota_{X_t} \Omega_t =  \widetilde \theta - \Theta^k_L.$$ Recall that $\widetilde \omega = \Omega^k_L$ on $L$ and, therefore $\Omega_t = \Omega^k_L$ on $L$. Since this form is nondegenerate, by reducing the neighborhoods further, we can assume that $\Omega_t$ is nondegenerate on $V$, for every $t \in [0, 1]$. Notice that $\iota_Y \left( \widetilde \theta - \Theta^k_L \right) = 0$, for any vector field $Y$ that takes values in $W^k_L,$ and that $$\iota_{E_1 \wedge \cdots \wedge E_r} \widetilde \theta = \iota_{E_1 \wedge \cdots \wedge E_r} \Theta^k_L = 0,$$ for vector fields $E_1, \dots, E_r$ such that $\pi(E_i)$ takes values in $\mathcal{E} \subset L$ (where $\pi: \bigwedge  ^k_r L \rightarrow L$ is the canonical projection). These last two properties, together with \cref{lemma1}, imply that there exists an unique time-dependent vector field $X_t$ with values in $W_L^k$ satisfying $$\iota_{X_t} \Omega_t =  \widetilde \theta - \Theta^k_L.$$ Furthermore, since $\widetilde \theta = \Theta ^k_L = 0$ on $L$, $X_t = 0$ on $L$, and its flow is globally defined on $L$. It follows that we can assume that $\phi_t$ (the flow of $X_t$) is defined on $V$ for $0 \leq t \leq 1$ by reducing the neighborhoods further. Finally, for $t = 1$, this flow satisfies $$\phi_1 ^\ast \widetilde \omega = \Omega$$ and preserves fibers, because $X_t$ takes values in $W_L^k.$ Defining $$\psi := (\phi_1)^{-1} \circ \phi,$$ we get the desired multisymplectomorphism.
\end{proof}

We can use \cref{LagrangianFormMultisymplectic} to give a local form for vertical $k$-coisotropic submanifolds $N \hookrightarrow M$ of a multisymplectic manifold of type $(k,r)$, where vertical means that $$W \big |_N \subseteq TN.$$ 

\begin{theorem}[Local form of $k$-coisotropic submanifolds relative to Lagrangian submanifolds]\label{localformCoisotropicLagrangian}Let $(M, \omega, W, \mathcal{E})$ be a multisymplectic manifold of type $(k, r)$, $i: N \hookrightarrow M$ be a $k$-coisotropic submanifold satisfying $$W|_N \subseteq TN,$$ and $L \hookrightarrow M$ be a $k$-Lagrangian submanifold complementary to $W$, that is, such that $$W |_L \oplus TL = TM |_L.$$ Then there exists a neighborhood $U$ of $L$ in $M$, a submanifold $Q \hookrightarrow L$, a neighborhood $V$ of $L$ in $\bigwedge^k_r L$, and a multisymplectomorphism $$\phi: U \rightarrow V$$ satisfying
\begin{itemize}
    \item[$i)$] $\phi$ is the identity on $L$, identified as the zero section in $\bigwedge^k_r L$;
    \item[$ii)$] $\phi(N \cap U) = \bigwedge^k_r L \big |_Q \cap V.$
\end{itemize}
\end{theorem}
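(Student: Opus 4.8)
The plan is to read off $\phi$ directly from the Lagrangian normal form of \cref{LagrangianFormMultisymplectic} and then check that the extra vertical hypothesis on $N$ forces its image to be a tube over a submanifold of the base. Since $L$ is $k$-Lagrangian and complementary to $W$, \cref{LagrangianFormMultisymplectic} provides neighborhoods $U$ of $L$ in $M$ and $V$ of $L$ in $\bigwedge^k_r L$, together with a multisymplectomorphism $\psi : U \to V$ which is the identity on $L$ and satisfies $\psi_\ast W = W^k_L$, where $W^k_L = \ker d\pi$ is the vertical distribution of $\pi : \bigwedge^k_r L \to L$. I would simply set $\phi := \psi$, so that $i)$ holds by construction, and then concentrate entirely on the image $\widetilde N := \psi(N \cap U)$.

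The crucial point is that $\widetilde N$ is a \emph{vertical} submanifold. Combining $W|_N \subseteq TN$ with $\psi_\ast W = W^k_L$ yields
$$W^k_L\big|_{\widetilde N} = \psi_\ast\big(W|_{N \cap U}\big) \subseteq \psi_\ast(TN) = T\widetilde N,$$
so the full kernel of $d\pi$ sits inside $T\widetilde N$ at every point of $\widetilde N$. Because the vertical distribution and the tangent space to the zero section $L$ together span $T(\bigwedge^k_r L)$, this verticality makes $\widetilde N$ transverse to $L$; hence $Q := \widetilde N \cap L$ is a submanifold of $L$, which — since $\psi$ restricts to the identity on $L$ — is just $N \cap L$ inside $U$.

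It remains to promote verticality into the fibrewise equality $ii)$. Shrinking $V$ if necessary to a fibrewise star-shaped neighborhood of the zero section — which the Moser construction behind \cref{LagrangianFormMultisymplectic} naturally provides, its neighborhoods being stable under the scaling $\pi_t$ — I would argue fibre by fibre. For $p \in \widetilde N$ with $x = \pi(p)$, the slice $\widetilde N \cap \pi^{-1}(x)$ is open in $\pi^{-1}(x) \cap V$ by verticality and closed in it because $N$, hence $\widetilde N$, is a closed embedded submanifold; as $\pi^{-1}(x) \cap V$ is connected and the slice is nonempty, the slice is the whole fibre $\pi^{-1}(x) \cap V$. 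In particular $(x,0) \in \widetilde N$, so $x \in Q$ and the entire fibre over $x$ lies in $\widetilde N$. This gives $\widetilde N = \pi^{-1}(Q) \cap V = \bigwedge^k_r L\big|_Q \cap V$, which is exactly $ii)$.

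I expect the main obstacle to be precisely this last passage from the infinitesimal to the global fibrewise statement: verticality alone only shows that each slice is open, and one genuinely needs both the closedness of $N$ and the fibrewise star-shapedness of $V$ to rule out $\widetilde N$ meeting a fibre in a proper open subset. Guaranteeing that the neighborhoods produced by \cref{LagrangianFormMultisymplectic} can be taken star-shaped is therefore the delicate ingredient; note that the coisotropy of $N$ is not used for the normal form itself — only the vertical hypothesis $W|_N \subseteq TN$ enters — coisotropy being the property that makes this local form relevant for the reduction results to follow.
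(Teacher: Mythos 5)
Your route is the paper's route up to the final step: the paper also takes $\phi$ to be the multisymplectomorphism of \cref{LagrangianFormMultisymplectic}, also sets $Q = L \cap N$, and also exploits $\psi_\ast W = W^k_L$ together with $W|_N \subseteq TN$ (your transversality remark, showing that $Q$ really is a submanifold, is a detail the paper passes over, so that part is a genuine improvement). The gap is in how you promote verticality to statement $ii)$: you declare the slice $\widetilde N \cap \pi^{-1}(x)$ closed in $\pi^{-1}(x) \cap V$ ``because $N$, hence $\widetilde N$, is a closed embedded submanifold'', but closedness of $N$ is nowhere among the hypotheses — an embedded submanifold is only locally closed. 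The failure is not hypothetical: take $M = \bigwedge^k_r L$ itself, $L$ the zero section, $Q_0 \subseteq L$ a submanifold of dimension at least $k$, and $N = \{\alpha \in \bigwedge^k_r L|_{Q_0} : |\alpha| > c\}$ for an auxiliary fibre metric and small $c>0$. This $N$ is embedded, vertical and $k$-coisotropic (it is an open subset of $\bigwedge^k_r L|_{Q_0}$, which the paper shows to be $k$-coisotropic in \cref{section:CoisotropicSubmanifolds}), yet its fibre slices are open, not closed, and miss the zero section entirely, and $N \cap L = \emptyset$. Your argument — which never shrinks $U$ except to gain star-shapedness, and star-shapedness does not help here — would conclude $(x,0) \in \widetilde N$ for every $x \in \pi(\widetilde N)$, which is false for this $N$. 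The theorem itself survives for this $N$ (take $U$ inside $\{|\alpha| < c\}$ and $Q = \emptyset$), but only because one shrinks $U$ away from the part of $N$ that never reaches $L$; that shrinking is precisely what is missing from your proof.

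This is also where the paper proceeds differently: rather than a topological open–closed argument inside each fibre, it integrates the verticality hypothesis. Since $W$ is integrable and $W|_N \subseteq TN$, $N$ is locally a union of plaques of $W$-leaves, $\psi$ carries $W$-leaves to fibres, and the paper then shrinks $U$ and $V$ so that $N \cap U$ is exactly the union of the leaf pieces $F_x \cap U$ with $x \in L\cap N$; statement $ii)$ follows by taking images, with no appeal to connectedness or closedness of slices. (To be fair, the paper's shrinking step silently assumes that the portion of $N$ lying on leaves missing $L \cap N$ does not accumulate on $L$ — the same difficulty you isolated, in different clothing.) Two of your supporting points are fine and worth keeping: the neighborhood $V$ can indeed be taken fibrewise star-shaped, since Moser's trick in \cref{LagrangianFormMultisymplectic} is run on neighborhoods stable under the scalings $\pi_t$, and one may always shrink $V$ at the end and replace $U$ by $\phi^{-1}(V)$; and your openness claim is correct, provided you note that it uses integrability of $W^k_L$ along all of $\widetilde N$ (a Frobenius argument), not just pointwise tangency. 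But neither can substitute for the missing shrinking step, so as written your proof establishes the theorem only under the additional hypothesis that $N$ is closed in a neighborhood of $L$.
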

\begin{proof} Let $U$, $V$, and $\phi$ be the neighborhoods and multisymplectomorphism from \cref{normalformlagrangian} and define $$Q := L \cap N.$$ We claim that, for $U,V$ small enough, $$\phi(N \cap U) = \bigwedge^k_r L \big |_Q \cap V.$$ First recall that we have $$\phi_\ast W = W_L,$$ where $W_L$ is the canonical $1$-Lagrangian distribution on $\bigwedge^k_rL.$ Let $x \in L \cap N$ and $F_x$ be the leaf of $W$ through $x$. It is clear that $F_x \subseteq N$, and that, reducing $U$ and $V$ if necessary, $$\phi(F_x \cap U) =\bigwedge^k_r T^\ast_x L \cap V,$$ since diffeomorphisms that preserve distributions preseve their leaves (when the distributions are integrable). Again, reducing $U$ and $V$ further, we may also assume that for every point $y \in N \cap U$ there is a point $x \in L \cap N$ such that the leaf of $W$ that contains $x$, $F_x$, also contains $y$, that is, we may assume that $$N \cap U = \bigcup_{x \in L \cap N} F_x \cap U.$$ Therefore, $$\phi(N \cap U) = \bigcup_{x \in L \cap N} \phi(F_x \cap U) = \bigcup_{x \in Q} \bigwedge^k_r T^\ast_x L \cap V = \bigwedge^k_rL \big |_Q \cap V,$$ proving the result.
\end{proof} 

\begin{theorem}\label{localformcoisotropic} Let $(M, \omega, W, \mathcal{E})$ be a multisymplectic manifold of type $(k,r)$, and $N \hookrightarrow M$ be a $k$-coisotropic submanifold satisfying $$W|_N \subseteq TN.$$ Then, given any point $x \in N$, there exists a neighborhood $U$ of $x$ in $M$, a manifold $L$, a submanifold $Q \hookrightarrow L$, a neighborhood $V$ of $L$ in $\bigwedge^k_r L$ and a multisymplectomorphism $$\phi: U \rightarrow V$$ such that
\begin{itemize}
    \item[$i)$] $\phi$ is the identity on $L$, idetified as the zero section in $\bigwedge^k_r L$;
    \item[$ii)$] $\phi(N \cap U) = \bigwedge^k_r L \big |_Q \cap V.$
\end{itemize}
\end{theorem}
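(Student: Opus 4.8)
The plan is to reduce the statement to the relative version \cref{localformCoisotropicLagrangian}. That theorem already manufactures the desired multisymplectomorphism once one is handed a $k$-Lagrangian submanifold complementary to $W$; here no such submanifold is given, only a point $x\in N$, so the entire content of the proof is to build, locally around $x$, a $k$-Lagrangian submanifold passing through $x$ and complementary to the vertical distribution $W$. Once such an $L$ is produced, I would simply invoke \cref{localformCoisotropicLagrangian}.

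First I would apply the Darboux theorem for multisymplectic manifolds of type $(k,r)$, \cref{thm:darbouxmultisymplectic}, obtaining a neighborhood $U_0$ of $x$, a manifold $\tilde L$, and a multisymplectomorphism $\psi:(U_0,\omega)\to (V_0,\Omega_{\tilde L})$ with $V_0$ open in $\bigwedge^k_r\tilde L$ and $\psi_\ast W = W^k_{\tilde L}$ the vertical distribution. Writing $\psi(x)=\alpha_0\in\bigwedge^k_r T^\ast_{\tilde x}\tilde L$, the problem reduces to finding inside the model a $k$-Lagrangian submanifold through $\alpha_0$ complementary to the vertical. To achieve this I would use a fibrewise translation carrying $\alpha_0$ onto the zero section: choose a closed $k$-form $\beta$ on a neighborhood of $\tilde x$ in $\tilde L$, lying in $\bigwedge^k_r$ (horizontal with respect to $\mathcal E$), with prescribed value $\beta_{\tilde x}=\alpha_0$. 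Such a $\beta$ exists by passing to coordinates adapted to the regular distribution $\mathcal E$, in which a constant-coefficient form matching $\alpha_0$ at $\tilde x$ is automatically closed and horizontal. The translation $\tau_{-\beta}(\gamma):=\gamma-\beta(\pi(\gamma))$ preserves fibres, hence preserves $W^k_{\tilde L}$, and is a multisymplectomorphism, since translation by a closed form pulls the tautological form back to itself up to $\pi^\ast\beta$ and $d\beta=0$. Setting $\Psi:=\tau_{-\beta}\circ\psi$, we obtain a multisymplectomorphism with $\Psi(x)$ on the zero section $Z$ and $\Psi_\ast W=W^k_{\tilde L}$; as $Z$ is $k$-Lagrangian (being a closed section, by \cref{prop:KLagrangianclosed}) and complementary to the vertical, its preimage $L:=\Psi^{-1}(Z)$ is a $k$-Lagrangian submanifold through $x$ complementary to $W$.

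With $L$ constructed, I would observe that $L$ and $N$ meet transversally: from $T_xL\oplus W_x=T_xM$ and $W_x\subseteq T_xN$ one gets $T_xL+T_xN=T_xM$, so $L\cap N$ is a submanifold. Applying \cref{localformCoisotropicLagrangian} to $(M,\omega,W,\mathcal E)$ and this $L$ then yields neighborhoods $U\ni x$, $V$ of $L$ in $\bigwedge^k_r L$, a submanifold $Q=L\cap N$, and a multisymplectomorphism $\phi:U\to V$ that is the identity on $L$ and satisfies $\phi(N\cap U)=\bigwedge^k_r L\big|_Q\cap V$, which are precisely conditions $i)$ and $ii)$.

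The main obstacle is the construction of the local Lagrangian $L$, and within it the delicate point is producing the closed horizontal form $\beta$ with a prescribed value while respecting the $\bigwedge^k_r$ constraint imposed by $\mathcal E$; this is where the coordinate hypotheses of \cref{hypotheseskr} are used. By contrast, the coisotropy of $N$ and the hypothesis $W|_N\subseteq TN$ enter only through the cited relative theorem, whose foliation argument identifies $\phi(N\cap U)$ with $\bigwedge^k_r L\big|_Q\cap V$.
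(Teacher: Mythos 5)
Your proposal takes essentially the same route as the paper: the paper's entire proof is to build, via the Darboux theorem (\cref{thm:darbouxmultisymplectic}), a $k$-Lagrangian submanifold through the given point $x$ and then invoke \cref{localformCoisotropicLagrangian}. Your explicit construction of that Lagrangian (fibrewise translation by a closed horizontal section through $\psi(x)$, pulled back to $M$) and your transversality check that $L\cap N$ is a submanifold are correct fillings-in of details the paper leaves implicit.
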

\begin{proof} Using \cref{thm:darbouxmultisymplectic}, we can build a $k$-Lagrangian submanifold $L$ through any given point $x \in N.$ Now the result follows using \cref{localformCoisotropicLagrangian}.
\end{proof}
\subsection{Coisotropic reduction}
When $k = 1$, that is, when $(M, \omega)$ is a symplectic manifold, we have the classical result of coisotropic reduction due to Weinstein \cite{weinstein1977lectures}.

\begin{theorem}\label{CoisotropicReductionSymplectic}[Coisotropic reduction in symplectic geometry] Let $(M, \omega)$ be a symplectic manifold, $i: N \hookrightarrow M$ be a coisotropic submanifold, and $j: L \hookrightarrow M$ be a Lagrangian submanifold that has clean intersection with $N$.  Then, $TN^{\perp}$ is an integrable distribution and determines a foliation $\mathcal{F}$ of maximal integral leaves. Suppose that the quotient space $N/ \mathcal{F}$ admits an smooth manifold structure such that the canonical projection $$\pi: N \rightarrow N/\mathcal{F}$$ defines a submersion. Then there exists an unique symplectic form on $N/\mathcal{F}$, $\omega_N$ compatible with $\omega$ in the following sense $$\pi^\ast \omega_N = i^\ast \omega.$$ Furthermore, if $\pi(N \cap L)$ is a submanifold, it is Lagrangian in $(N/\mathcal{F}, \omega_N).$
\end{theorem}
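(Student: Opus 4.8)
The plan is to establish this in three stages, following the classical symplectic template.

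First, I would verify that $TN^\perp$ is integrable. Since $N$ is coisotropic, $TN^\perp \subseteq TN$, so the distribution lives inside $N$. Integrability follows from the closedness of $\omega$: for vector fields $X, Y$ taking values in $TN^\perp$ and any $Z$ tangent to $N$, one expands $d\omega(X,Y,Z,\cdots)=0$ via the Cartan formula and uses $\iota_X\omega|_N = \iota_Y\omega|_N = 0$ to deduce $\iota_{[X,Y]}\omega|_N = 0$, i.e. $[X,Y]$ again lies in $TN^\perp$. This is the standard Frobenius argument and I expect it to be routine.

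Second, I would construct $\omega_N$. The form $i^\ast\omega$ is a closed $2$-form on $N$ whose kernel at each point is exactly $T_xN^\perp$ (by the defining property of $\perp$ together with coisotropy). Because $\pi$ is a submersion with connected fibres tangent to $\ker(i^\ast\omega)$, the form $i^\ast\omega$ is basic: it descends to a well-defined form $\omega_N$ on $N/\mathcal{F}$ with $\pi^\ast\omega_N = i^\ast\omega$. I would check well-definedness by confirming $\iota_X(i^\ast\omega)=0$ and $\pounds_X(i^\ast\omega)=0$ for $X$ tangent to the leaves (the latter via Cartan's formula and $d(i^\ast\omega)=0$), which are precisely the conditions for a form to descend. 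Closedness of $\omega_N$ follows since $\pi^\ast d\omega_N = d\pi^\ast\omega_N = d i^\ast\omega = 0$ and $\pi^\ast$ is injective on forms. Nondegeneracy of $\omega_N$ holds because the kernel of $i^\ast\omega$ has been quotiented out. Uniqueness is immediate from injectivity of $\pi^\ast$.

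Third, and this is where I expect the main difficulty, I would show $\pi(N\cap L)$ is Lagrangian. The dimension count is the crux: one must show $\dim \pi(N\cap L) = \tfrac12 \dim(N/\mathcal{F})$ and that $T\pi(N\cap L)$ is isotropic for $\omega_N$. Isotropy is comparatively clean: since $L$ is Lagrangian, $i^\ast\omega$ vanishes on $T(N\cap L)$, and pushing forward by $\pi_\ast$ shows $\omega_N$ vanishes on the image. The hard part is the dimension equality, which relies on the clean intersection hypothesis to control $\dim(N\cap L)$ and $\dim T(N\cap L)$, together with the relation $T(N\cap L) = TN\cap TL$. I would argue via linear algebra in a single tangent space: using $TL = TL^\perp$, $TN^\perp \subseteq TN$, and the identity $\dim W^\perp = \dim M - \dim W$ valid in the symplectic (regular) case, one computes that $\pi_\ast$ maps $T(N\cap L)$ onto a subspace whose dimension is exactly half that of the quotient, and that this map has kernel $TN^\perp \cap TL = TN^\perp$ (using $TN^\perp \subseteq TL$, which follows from $L$ Lagrangian and $TN^\perp$ isotropic contained in $TL^{\perp\perp}$). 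Assembling these dimension counts yields the Lagrangian property, and the clean intersection assumption is exactly what guarantees $\pi(N\cap L)$ is a submanifold of the expected dimension rather than merely an immersed image.
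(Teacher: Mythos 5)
Your stages 1 and 2 are correct and routine: specialized to $k=1$ they are precisely the arguments the paper gives for its multisymplectic generalizations (the involutivity proposition and the reduction theorem), and the paper itself does not reprove the symplectic statement but cites Weinstein, attributing the Lagrangian-projection part to a dimension-counting argument in Abraham--Marsden. The genuine gap is in your stage 3. The inclusion $TN^\perp \subseteq TL$ that your kernel identification rests on is false, and the justification offered for it is a non sequitur: $TL^{\perp\perp} = TL$, and isotropy of $TN^\perp$ only gives $TN^\perp \subseteq (TN^\perp)^\perp = TN$, not membership in $TL$. Concretely, take $M = \mathbb{R}^4$ with $\omega = dq^1\wedge dp_1 + dq^2\wedge dp_2$, $N = \{p_2 = 0\}$ (coisotropic) and $L = \{q^1 = q^2 = 0\}$ (Lagrangian); the intersection is clean, $TN^\perp = \langle \partial_{q^2}\rangle$, yet $TL = \langle \partial_{p_1},\partial_{p_2}\rangle$, so $TN^\perp \cap TL = 0 \neq TN^\perp$. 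The kernel of $\pi_\ast$ restricted to $T(N\cap L) = TN\cap TL$ is $TL\cap TN^\perp$, which can be anything between $0$ and $TN^\perp$, so your dimension count as stated collapses.

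The repair is the classical pointwise linear algebra, which needs the full strength of the symplectic orthogonality identities rather than any containment of $TN^\perp$ in $TL$. Work at a point with $C = T_xL$, $W = T_xN$, $\dim M = 2n$, $\codim N = k$. Using $\dim A^\perp = 2n - \dim A$, $(A\cap B)^\perp = A^\perp + B^\perp$, $C^\perp = C$, $W^\perp \subseteq W$, and the modular law $(C + W^\perp)\cap W = (C\cap W) + W^\perp$, one gets for the image $\bar C := \pi_\ast(C\cap W)$ in $W/W^\perp$:
\[
\bar C^{\perp} \;=\; \bigl((C\cap W)^\perp \cap W\bigr)/W^\perp \;=\; \bigl((C + W^\perp)\cap W\bigr)/W^\perp \;=\; \bigl((C\cap W) + W^\perp\bigr)/W^\perp \;=\; \bar C,
\]
which gives isotropy and coisotropy of $\bar C$ simultaneously; equivalently, $\dim\bar C = \dim(C\cap W) - \dim(C\cap W^\perp) = n-k = \tfrac12\dim(W/W^\perp)$. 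Cleanness of the intersection is used only to ensure $T(N\cap L) = TN\cap TL$, so this pointwise computation applies to the actual tangent spaces of $N\cap L$. It is worth noting that the paper, for its multisymplectic theorem, deliberately abandons this dimension-counting route (remarking that it ``hardly translates'' beyond the symplectic case, since $\flat_1$ is no longer an isomorphism) and argues instead via a local normal form for vertical coisotropic submanifolds; in the symplectic case either route works, but the counting route must be executed as above.
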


We would like to find an analogous result in multisymplectic manifolds. For the first part of \cref{CoisotropicReductionSymplectic}, the classical argument works.

\begin{proposition}[\cite{Ibort1999OnTG}]\label{Prop:Involutivity} Let $(M, \omega)$ be a multisymplectic manifold of order $k$ and $i:N\hookrightarrow M$ be a \kcoiso submanifold. Then, $(TN)^{\perp, k} \cap TN \subseteq TN$ defines an involutive distribution.
\end{proposition}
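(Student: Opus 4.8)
The plan is to reproduce the classical symplectic argument, the only new ingredient being the bookkeeping of the index $k$ in the orthogonal complement. First I would translate membership in the distribution into a pullback condition. Since $(TN)^{\perp, k} \subseteq TN$, any local section $X$ of $(TN)^{\perp, k}$ is tangent to $N$, and for such an $X$ the defining condition $\omega(X, w_1, \dots, w_k) = 0$ for all $w_1, \dots, w_k \in TN$ is equivalent to $i^\ast(\iota_X \omega) = 0$ (evaluating $i^\ast(\iota_X\omega)$ on $k$ tangent vectors of $N$ recovers $\omega(X, w_1, \dots, w_k)$). Thus the distribution is exactly $\{X \in \Gamma(TN) : i^\ast(\iota_X \omega) = 0\}$, and involutivity amounts to showing this set is closed under the Lie bracket.

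Second, given two such sections $X, Y$, I would compute $\iota_{[X,Y]}\omega$ using the graded commutator identity $\iota_{[X,Y]} = \pounds_X \iota_Y - \iota_Y \pounds_X$ applied to $\omega$, which gives $\iota_{[X,Y]}\omega = \pounds_X(\iota_Y \omega) - \iota_Y(\pounds_X \omega)$. Since $\omega$ is closed, Cartan's formula yields $\pounds_X \omega = d\iota_X \omega + \iota_X d\omega = d\iota_X\omega$, so that $\iota_{[X,Y]}\omega = \pounds_X(\iota_Y\omega) - \iota_Y(d\iota_X\omega)$.

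Third, I would pull back along $i$. Because $X$ and $Y$ are tangent to $N$, the operator $i^\ast$ commutes with $d$, with the contractions $\iota_X, \iota_Y$, and with the Lie derivative $\pounds_X$ (the local flow of $X$ preserves $N$). Hence $i^\ast \pounds_X(\iota_Y\omega) = \pounds_{X|_N}\, i^\ast(\iota_Y\omega) = 0$ and $i^\ast \iota_Y(d\iota_X\omega) = \iota_{Y|_N}\, d\, i^\ast(\iota_X\omega) = 0$, using $i^\ast(\iota_Y\omega) = 0$ and $i^\ast(\iota_X\omega) = 0$ respectively. Therefore $i^\ast(\iota_{[X,Y]}\omega) = 0$, that is, $[X,Y]$ is again a section of $(TN)^{\perp, k}$, which proves involutivity.

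The main point to be careful about is the reduction to tangent sections and the commutation of $i^\ast$ with $\pounds_X$: both rely on $X, Y$ being tangent to $N$, which is where the inclusion $(TN)^{\perp, k} \subseteq TN$ (automatic in the non-degenerate case, where $\ker\flat_1 = 0$) is used. An equivalent route that avoids Lie-derivative pullbacks is to expand the closedness condition $0 = d\omega(X, Y, w_1, \dots, w_k)$ with $w_1, \dots, w_k \in TN$ via the invariant formula for the exterior derivative; every term of the form $\omega(X, -)$ or $\omega(Y, -)$ evaluated on tangent vectors of $N$ vanishes, and the brackets $[w_i, w_j]$ remain tangent to $N$, leaving precisely $\pm\,\omega([X,Y], w_1, \dots, w_k) = 0$. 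This second approach requires somewhat more delicate index bookkeeping than the operator-identity argument above, which is why I would favor the latter.
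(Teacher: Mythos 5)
Your proof is correct, but it takes a genuinely different route from the paper's. The paper works intrinsically on $N$: it sets $\omega_0 := i^\ast\omega$ and expands $0 = d\omega_0(X, Y, Z_1, \dots, Z_k)$ via the invariant (Koszul) formula for the exterior derivative, observing that every summand except $\omega_0([X,Y], Z_1, \dots, Z_k)$ has $X$ or $Y$ contracted only against vectors tangent to $N$ and hence vanishes --- this is precisely the ``index bookkeeping'' alternative you sketch in your last paragraph and chose not to pursue. Your main argument instead runs the Cartan-calculus identities $\iota_{[X,Y]} = \pounds_X \iota_Y - \iota_Y \pounds_X$ and $\pounds_X = d\iota_X + \iota_X d$ on $M$ and then pulls back along $i$. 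Both are standard and both work; what yours buys is that the cancellation is packaged into operator identities rather than a sum over permutations, while the paper's version avoids the one technical point your write-up leaves implicit: $\iota_Y\omega$, $\pounds_X\omega$ and $d\iota_X\omega$ are forms on $M$, so to apply your identities you must first (locally) extend the sections $X, Y$ of $(TN)^{\perp,k}$ to vector fields on $M$ tangent to $N$; the commutation relations you invoke ($i^\ast\pounds_X = \pounds_{X|_N}i^\ast$ and $i^\ast\iota_Y = \iota_{Y|_N}i^\ast$ for fields tangent to $N$) then ensure the conclusion is independent of the chosen extension. You could in fact merge the two approaches and get the cleanest version by running your operator computation directly with $\omega_0$ on $N$: since $\iota_X\omega_0 = 0$, $\iota_Y\omega_0 = 0$ and $d\omega_0 = i^\ast d\omega = 0$, you get $\iota_{[X,Y]}\omega_0 = \pounds_X\iota_Y\omega_0 - \iota_Y\left(d\iota_X\omega_0 + \iota_X d\omega_0\right) = 0$ with no extensions and no pullback lemmas at all.
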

\begin{proof} Let $X, Y \in \mathfrak{X}(N)$ be vector fields on $N$ with values in $(TN)^{\perp, k},$ and let $Z_1, \dots, Z_k \in \mathfrak{X}(N)$ be arbitrary vector fields on $N$. Denote $$\omega_0 := i^\ast \omega.$$ Since $\omega$ is closed, we have
\begin{align*}
    0 &= (d \omega_0)(X, Y, Z_1, \dots, Z_k) =  X(\omega_0(Y, Z_1, \dots, Z_k)) - Y(\omega_0(X, Z_1, \dots, Z_k))\\
    &+ \sum_{j = 1}^k (-1)^j Z_i(\omega_0(X, Y, Z_1 \dots, \hat{Z}_i, \dots, Z_k)) - \omega_0([X, Y], Z_1, \dots, Z_k)\\
    &+ \sum_{j = 1}^k (-1)^{j +1}\omega_0([X, Z_i], Y, Z_1 \dots, \hat{Z}_i, \dots, Z_k)\\
    &+ \sum_{j = 1}^k (-1)^{j}\omega_0([Y, Z_i], X, Z_1 \dots, \hat{Z}_i, \dots, Z_k)\\
    &+ \sum_{i < j} (-1)^{i +j} \omega_0([Z_i, Z_j], X, Y, Z_1 \dots, \hat{Z}_i, \dots,\hat{Z}_j, \dots, Z_k).
\end{align*}
Now, since both $X$ and $Y$ take values in $(TN)^{\perp, k}$, all the summands but $$\omega_0([X, Y], Z_1, \dots, Z_k)$$ are zero. Therefore, we conclude $$\omega_0([X, Y], Z_1, \dots, Z_k) = 0,$$ for all $Z_1, \dots, Z_k \in \mathfrak{X}(N),$ that is, $[X, Y]$ takes values in $(TN)^{\perp, k}$, proving that the distribution is involutive.
\end{proof}

If $(TN)^{\perp, k} \cap TN$ is regular, by Frobenius' Theorem, it determines a foliation $\mathcal{F}$ of maximal leaves. We have the following result.

\begin{theorem}[\cite{Ibort1999OnTG}]\label{CoisotropicReductionMultisymplectic} Let $(M, \omega)$ be a multisymplectic manifold of order $k$, and $i: N \hookrightarrow M$ be a \kcoiso submanifold such that $(TN)^{\perp, k} \cap TN$ is regular. Suppose that $N/\mathcal{F}$ admits a smooth manifold structure such that the canonical projection $$\pi: N \rightarrow N/\mathcal{F}$$ defines a submersion. Then there exists an unique multisymplectic form of order $k$ on $N/\mathcal{F}$, $\omega_{N}$, that is compatible with $\omega$, that is, $$\pi^\ast \omega_N = i ^\ast \omega.$$
\end{theorem}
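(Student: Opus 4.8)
The plan is to show that the pulled-back form $\omega_0 := i^\ast \omega$ is \emph{basic} with respect to the foliation $\mathcal{F}$, i.e. that it is the pullback under $\pi$ of a (necessarily unique) $(k+1)$-form on $N/\mathcal{F}$. Write $\mathcal{D} := (TN)^{\perp, k}$ for the distribution that, by \cref{Prop:Involutivity}, is involutive and contained in $TN$; since we assume it is regular, Frobenius produces the foliation $\mathcal{F}$, whose leaves are exactly the fibers of the submersion $\pi$, so that $\ker \pi_\ast = \mathcal{D}$. The whole argument then reduces to checking the two standard conditions that let a form descend along a surjective submersion with connected fibers.

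First I would check \textbf{horizontality}: for any vector field $X$ with values in $\mathcal{D} = (TN)^{\perp, k}$ one has $\iota_X \omega_0 = 0$. This is immediate from the definition of the $k$-th orthogonal, since for $w_1, \dots, w_k \in TN$ we have $(\iota_X \omega_0)(w_1, \dots, w_k) = \omega(X, w_1, \dots, w_k) = 0$ precisely because $X \in (TN)^{\perp, k}$ (here $X$ is tangent to $N$, which is where the inclusion $\mathcal{D} \subseteq TN$ from \cref{Prop:Involutivity} is used). Note that, beyond guaranteeing that $\mathcal{D}$ integrates to a foliation of $N$, the coisotropic hypothesis is not needed again: horizontality follows purely from membership in the orthogonal.

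Next I would use closedness of $\omega$. Since $d\omega_0 = d(i^\ast \omega) = i^\ast d\omega = 0$, the differential of $\omega_0$ is trivially horizontal as well, so $\iota_X d\omega_0 = 0$ for every $X$. The standard criterion for a form $\alpha$ on the total space of a surjective submersion with connected fibers to descend to the base is exactly that $\iota_X \alpha = 0$ and $\iota_X d\alpha = 0$ for all $X \in \ker \pi_\ast$ (equivalently $\iota_X \alpha = 0$ and $\pounds_X \alpha = 0$, via Cartan's formula). Both hold for $\omega_0$, so there is a unique $(k+1)$-form $\omega_N$ on $N/\mathcal{F}$ with $\pi^\ast \omega_N = \omega_0 = i^\ast \omega$; uniqueness is forced by the injectivity of $\pi^\ast$ on forms, which holds because $\pi$ is a surjective submersion. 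Closedness of $\omega_N$ then follows from $\pi^\ast(d\omega_N) = d(\pi^\ast \omega_N) = d\omega_0 = 0$ together with this injectivity, so $\omega_N$ is a multisymplectic form of order $k$.

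The main obstacle is the descent step itself. Concretely, one constructs $\omega_N$ pointwise by $\omega_N(\bar v_1, \dots, \bar v_{k+1}) := \omega_0(v_1, \dots, v_{k+1})$ for arbitrary lifts $v_i$ of the $\bar v_i$ along $\pi_\ast$, and must verify that this value is independent of the chosen lifts (this is exactly horizontality, since two lifts differ by an element of $\ker \pi_\ast = \mathcal{D}$) and independent of the chosen base point within a fiber (which follows from the invariance $\pounds_X \omega_0 = 0$ together with connectedness of the leaves); smoothness of $\omega_N$ is then obtained from local sections of $\pi$. Unlike the symplectic case, no non-degeneracy of $\omega_N$ is claimed, so there is nothing further to check and the argument terminates.
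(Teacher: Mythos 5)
Your proposal is correct and takes essentially the same route as the paper: both arguments show that $i^\ast \omega$ is basic, using membership in $(TN)^{\perp,k}$ for horizontality and closedness of $\omega$ (via Cartan's formula) for invariance along the leaves, and then obtain $\omega_N$ by the pointwise lift construction you describe in your final paragraph. The only cosmetic difference is that you package the descent as the standard basic-form criterion for surjective submersions with connected fibers (and explicitly record closedness of $\omega_N$ from injectivity of $\pi^\ast$), whereas the paper carries out the same verification by hand, using flows of vector fields with values in $(TN)^{\perp,k}$ to connect points within a leaf.
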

\begin{proof} Let $x \in N$. Notice that, since $\pi$ defines a submersion, we have the identification $$T_{[x]} N/\mathcal{F} = T_x N /\ker d_x \pi = T_xN/ (T_xN)^{\perp, k} \cap T_xN.$$ Let $v_1, \dots, v_{k+1} \in T_x N.$ The relation $\pi^\ast \omega_N = i^\ast \omega$ forces us to define $$\omega_N |_{[x]}([v_1], \dots, [v_{k+1}]) := \omega|_x(v_1, \dots, v_{k+1}),$$ proving that $\omega_N$ is unique. It only remains to show that the previous definition does not depend on the choice of $x$ and $v_i$. For the latter, first observe that if $[v] = 0$, that is, $v \in (T_xN)^{\perp, k}$ we have $$\omega(v, v_1, \dots, v_k) = 0,$$ for all $v_1, \dots, v_k \in T_xN.$ Therefore, if $[v_i] = [u_i],$ for $i = 1, \dots, k+1$, we have
\begin{align*}
    \omega_N|_{[x]}([v_1], \dots, [v_{k+1}]) &= \omega|_x (v_1, \dots, v_{k+1}) = \omega|_x(u_1, v_2, \dots, v_{k+1}) = \dots \\
    &= \omega|_x(u_1, \dots, u_{k+1}) = \omega_N|_{[x]}([u_1], \dots, [u_{k+1}]).
\end{align*}
For the independence of the chosen point, given $x, y \in N$ in the same leaf, we can find a complete vector field $X$ on $N$ with values in $(TN)^{\perp, k}$ such that its flow satisfies $$\phi^X_1(x) = y.$$ Now, denoting $\omega_0 := i^\ast \omega$, we have $$\pounds_X \omega_0 = \iota_X d \omega_0 + d \iota_X \omega_0 = 0,$$ since $\omega_0$ is closed and $\iota_X \omega_0 = 0$ (given that $X$ takes values in $(TN)^{\perp, k}$). This implies $(\phi^X_1) ^\ast \omega_0 = \omega_0.$ In particular, given $v_1, \dots, v_{k+1} \in T_x N$ we have
\begin{align*}
    \omega_N |_{[x]}([v_1], \dots, [v_{k+1}]) &= \omega_0 |_x(v_1, \dots, v_{k+1}) = \omega_0 |_y(d_x \phi^X_1 \cdot v_1, \dots, d_x\phi^X_1 \cdot v_{k+1})\\
    &= \omega_N|_{[y]}([d_x \phi^X_1 \cdot v_1], \dots, [d_x\phi^X_1 \cdot v_{k+1}]).
\end{align*}
Since $X$ is tangent to $\mathcal{F}$, its flow $\phi^X_1$ leaves invariant the foliation, and $\pi \circ \phi = \pi$. In particular, $$[v_i] = d_x \pi \cdot v_i = d_y \pi \cdot d_x \phi \cdot v_i = [d_x \phi \cdot v_i].$$ Finally, if $v_1, \dots, v_{k+1} \in T_x N$, $u_1, \dots, u_{k+1} \in T_yN$ with $[v_i] = [u_i]$,
\begin{align*}
    \omega_N|_{[x]}([v_1], \dots, [v_{k+1}]) &= \omega_0|_x(v_1, \dots, v_{k+1}) = \omega_0|_y(d_x \phi^X_1 \cdot v_1, \dots, d_x\phi^X_1 \cdot v_{k+1})\\
    &= \omega_N|_{[y]}([d_x \phi^X_1 \cdot v_1], \dots, [d_x\phi^X_1 \cdot v_{k+1}])\\
    &= \omega_N|_{[y]}([u_1], \dots, [u_{k+1}]),
\end{align*}
proving the result.
\end{proof}

For the projection of Lagrangian submanifolds, the second part of \cref{CoisotropicReductionSymplectic}, multisymplectic manifolds are \textit{too general} and hard to study without asking for further structures. Indeed, we can easily find a counterexample.
\begin{example}[A counterexample] Let $L = \langle l_1, l_2, l_3\rangle$ be a $3$-dimensional vector space and define $$V:= L \oplus \bigwedge^2 V^\ast.$$ Let $l^1, l^2, l^3$ be the dual basis induced on $L^\ast$ and denote $$\alpha^{ij} := l^i \wedge l^j.$$ Then $$V = \langle l_1, l_2, l_3, \alpha^{12}, \alpha^{13}, \alpha^{23}\rangle.$$ Let $l^1, l^2, l^3, \alpha_{12}, \alpha_{13}, \alpha_{23}$ be the dual basis. We have $$\Omega_L = \alpha_{12} \wedge l^1 \wedge l^2 + \alpha_{13} \wedge l^1 \wedge l^3 + \alpha_{23}\wedge l^2 \wedge l^3.$$ Define $$N := \langle l_1 + l_2, l_1 + \alpha^{23}, l_2 + \alpha^{13}, l_3, \alpha^{12}\rangle.$$ Then $N$ is a $2$-coisotropic subspace. Indeed, a quick calcultion shows $N^{\perp, 2} = 0$. This implies that the quotient space $N/N^{\perp, 2}$ is (isomorphic to) $N.$ Now, taking as the $2$-Lagrangian subspace $L = \langle l_1, l_2,l_3\rangle$, we have $$L \cap N = \langle l_1 + l_2 , l_3 \rangle.$$ However, this does not define a $2$-Lagrangian subspace of $(N, \Omega_L |_N)$, since $\alpha^{12} \in (N \cap L)^{\perp, 2}$, but $ \alpha^{12} \not \in(L \cap W).$
\end{example}
Nervertheless, we will be able to find a generalization of the previous theorem restricting the study to a particular class, those that locally are bundles of forms, which are precisely the multisymplectic manifolds appearing in classical field theories \cite{gotay2004momentum}. More particularly, we will study coisotropic reduction of vertical coisotropic submanifolds in multisymplectic manifolds of type $(k,r)$.\\

The classical proof of the last part of \cref{CoisotropicReductionSymplectic} uses en elaborate comparison of dimensions argument (see \cite{abraham2008foundations}). This argument hardly translates to multisymplectic manifolds since, in general, the map $$TM \xrightarrow{ \flat_1} \bigwedge^k M$$ does not define a bundle isomorphism. However, we can prove it using the local form proved in \cref{LocalFormSection}.\\

\label{NomalFormCoisotropicReduction} Given some manifold $L$, and a regular distribution on $L$, $\mathcal{E}$, define $$M := \bigwedge^k_r L$$ endowed with its canonical multisymplectic structure. Here, the horizontal forms, are taken with respect to $\mathcal{E}$. Let $i: Q \hookrightarrow L$ be a submanifold of dimension at least $k$ (for $\bigwedge^k Q$ to be non-zero) and take $$N:= \bigwedge^k_rL \big |_Q$$ the restricted bundle to $Q$. Then, $N \hookrightarrow M$ is a  \kcoiso submanifold. Indeed, under the (non-canonical) identification $$T_{(x, \alpha)}N = T_xQ \oplus \bigwedge^k_r T^\ast_xL,$$ for $(x, \alpha) \in N$, we have $$(TN)^{\perp, k} = 0 \oplus \ker i^\ast,$$ where $i^\ast$ is the induced map $$i^\ast: \bigwedge^k_r T^\ast_x L \subseteq \bigwedge^k T^\ast_x L \rightarrow \bigwedge^k T^\ast_x Q.$$ We claim that the image of $\bigwedge^k_r T^\ast_x L$ under $i^\ast$ is $\bigwedge ^k_r T^\ast_x Q$, where the horizontal forms are taken with respect to the subspace $$\widetilde{\mathcal{E}}_x := \mathcal{E}_x \cap T_xQ.$$ Indeed, it is clear that $$i^\ast \left(\bigwedge^k_r T^\ast_xL \right) \subseteq \bigwedge^k_r T^\ast_x Q,$$ since, if $e_1, \dots, e_r \in \widetilde{\mathcal{E}}_x$ and $\alpha \in \bigwedge^k_r T^\ast_xL$, we have $$\iota_{e_1 \wedge \dots \wedge e_r} i ^\ast \alpha = i^\ast(\iota_{e_1 \wedge \dots \wedge e_r} \alpha) = 0.$$ Now, to see the other inclusion, we take a projection $$p: T_xL \rightarrow T_xQ$$ that satisfies $p(\mathcal{E}_x) = \widetilde{\mathcal{E}}_x,$ that is, a projection that makes the following diagram commutative
\[\begin{tikzcd}
	{\mathcal{E}_x} && {T_xL} \\
	{\widetilde{\mathcal{E}}_x} && {T_xQ}
	\arrow[hook, from=1-1, to=1-3]
	\arrow[hook, from=2-1, to=2-3]
	\arrow["{p|_{\mathcal{E}_x}}", from=1-1, to=2-1]
	\arrow["p", from=1-3, to=2-3]
\end{tikzcd}.\]
Take $\beta \in \bigwedge^k_r T_x ^\ast Q$ and define $\alpha \in \bigwedge^k T^\ast_xL$ as $$\alpha:= p^\ast \beta.$$ It is clear that $i^\ast \alpha = \beta.$ Furthermore, since $p$ satisfies $p(\mathcal{E}_x) = \widetilde{\mathcal{E}}_x,$ we have $$\alpha \in \bigwedge^k_r T^\ast_xL,$$ proving that $$i^\ast \left(\bigwedge^k_r T^\ast_xL \right) = \bigwedge^k_r T^\ast_x Q.$$ In particular, when $\mathcal{E} \cap TQ$ has constant rank, so does\footnote{Because $\ran (TN)^{\perp, k} = \ran \ker i^\ast = \ran \bigwedge^k_r L - \ran \bigwedge^k_r Q$} $(TN)^{\perp, k}$, and we have that the maximal integral leaf of this distribution that contains $(x,0)$ is $$\ker i^\ast |_{\{x\}} = \{(x, \alpha): \alpha \in \ker i^\ast, \alpha \in \bigwedge^k_r T^\ast_x L\}.$$ These leaves define a vector subbundle
\[\begin{tikzcd}
	{\ker i^\ast} && {\bigwedge^k_rL\big|_Q} \\
	& Q
	\arrow[hook, from=1-1, to=1-3]
	\arrow[from=1-1, to=2-2]
	\arrow[from=1-3, to=2-2]
\end{tikzcd}.\]
By the previous considerations, these bundles fit in a short exact sequence
\[\begin{tikzcd}
	0 & {\ker i^\ast} & {\bigwedge^k_rL} & {\bigwedge^k_rQ} & 0
	\arrow[hook, from=1-2, to=1-3]
	\arrow["{i^\ast}", from=1-3, to=1-4]
	\arrow[from=1-4, to=1-5]
	\arrow[from=1-1, to=1-2]
\end{tikzcd}.\]
Therefore, we may identify $$N/\mathcal{F} = \bigwedge^k_r Q,$$ where the horizontal forms are taken with respect to $\widetilde{\mathcal{E}} = \mathcal{E}\cap TQ$ (which we are assuming to have constant rank). A routine check shows that the \ms structure induced from \cref{CoisotropicReductionMultisymplectic} is none other than the canonical \ms structure on $\bigwedge^k_rQ.$\\

Now, let us study the projection of Lagrangian submanifolds. An important class of $k$-Lagrangian submanifolds in $\bigwedge^k_r L$ are given by closed forms (\cref{prop:KLagrangianclosed}) $$\alpha: L \rightarrow \bigwedge^k_r L.$$ We have the following diagram 
\[\begin{tikzcd}
	L && {\bigwedge^k_r L = M} \\
	Q && {\bigwedge^k_rL \big|_Q =N} \\
	&& {\bigwedge^k_r Q = N/\mathcal{F}}
	\arrow["\alpha", from=1-1, to=1-3]
	\arrow[hook, from=2-1, to=1-1]
	\arrow["{\alpha |_Q}", from=2-1, to=2-3]
	\arrow[hook, from=2-3, to=1-3]
	\arrow["{i ^\ast = \pi}", from=2-3, to=3-3]
	\arrow["{i^\ast \alpha}"', curve={height=12pt}, dashed, from=2-1, to=3-3]
\end{tikzcd}.\]
It is clear that the projection of $\alpha(L) \cap N$ onto $N/\mathcal{F} = \bigwedge^k_r Q$ is exactly the image of $$i^\ast \alpha: Q \rightarrow \bigwedge^k_r Q.$$ Since $\alpha$ is closed, so is $i^\ast \alpha$, proving that in this local form, $k$-Lagrangian submanifolds complementary to the vertical distribution $\mathcal{W}$ reduce to $k$-Lagrangian submanifolds. Therefore, using \cref{localformCoisotropicLagrangian} we have the main result of this section:
\begin{theorem} Let $(M, \omega, \mathcal{W}, \mathcal{E})$ be a multisymplectic manifold of type $(k,r)$, $i:N \hookrightarrow M$ a $k$-coisotropic submanifold satisfying $$\mathcal{W} \big |_N \subseteq TN,$$ and $j:L \hookrightarrow M$ a \klagran submanifold complementary to $\mathcal{W}.$ Suppose that $N/\mathcal{F}$ admits a smooth manifold strcuture such that $\pi : N \rightarrow N/\mathcal{F}$ defines a submersion, where $\mathcal{F}$ is the foliation associated to $(TN)^{\perp, k}$ (see \cref{CoisotropicReductionMultisymplectic}), and that $$\mathcal{E} \big |_N \cap \left( TN/ \mathcal{W}\big |_N\right)$$ has constant rank. Then, if $\pi(L \cap N)$ is a submanifold, it is \klagran.
\end{theorem}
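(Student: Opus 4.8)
The plan is to reduce the entire statement to the canonical local model $\bigwedge^k_r L$, where the relevant computation has essentially already been carried out in the discussion preceding the theorem. The guiding observation is that being $k$-Lagrangian is a pointwise condition on tangent spaces — namely, $T(\pi(L\cap N)) = (T(\pi(L\cap N)))^{\perp,k} + \ker\flat_1$ at each point — and is therefore both local in nature and invariant under multisymplectomorphisms. Hence it suffices to verify the conclusion after transporting the data, via a suitable multisymplectomorphism, into the canonical model and then reading off the answer from the bundle picture.

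First I would invoke \cref{localformCoisotropicLagrangian} applied to the given $k$-Lagrangian submanifold $L$, which is complementary to $\mathcal{W}$. This yields a neighborhood $U$ of $L$ in $M$, a submanifold $Q \hookrightarrow L$, a neighborhood $V$ of $L$ in $\bigwedge^k_r L$, and a multisymplectomorphism $\phi: U \to V$ that is the identity on $L$ (identified with the zero section), that carries $N \cap U$ onto $\bigwedge^k_r L|_Q \cap V$ with $Q = L \cap N$, and that satisfies $\phi_\ast \mathcal{W} = W_L$. Since $\phi$ is a multisymplectomorphism it preserves the orthogonal distributions, so it carries $(TN)^{\perp,k}$ to the corresponding canonical distribution and therefore maps the leaves of $\mathcal{F}$ to those of the foliation of $\bigwedge^k_r L|_Q$. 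Consequently $\phi$ descends to a multisymplectomorphism $\bar\phi$ between the reduced space $(N/\mathcal{F}, \omega_N)$ and $\bigwedge^k_r Q$, with horizontal forms taken relative to $\widetilde{\mathcal{E}} = \mathcal{E}\cap TQ$; here the constant-rank hypothesis on $\mathcal{E}|_N \cap (TN/\mathcal{W}|_N)$ is precisely what guarantees that $\widetilde{\mathcal{E}}$ has constant rank, so that $\bigwedge^k_r Q$ is a genuine bundle and, by the computation preceding the theorem, $N/\mathcal{F} = \bigwedge^k_r Q$ with its canonical multisymplectic structure.

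With this identification secured, the conclusion follows from the explicit picture already established. Under $\phi$ the intersection $L \cap N$ becomes the zero section restricted to $Q$, that is, the image of the closed form $\alpha = 0$; more generally, any $k$-Lagrangian submanifold complementary to $\mathcal{W}$ is locally the image of a closed form by \cref{prop:KLagrangianclosed}. The projection $\pi = i^\ast$ then sends it to the image of $i^\ast\alpha: Q \to \bigwedge^k_r Q$, which is again closed and hence a $k$-Lagrangian submanifold, once more by \cref{prop:KLagrangianclosed}. Transporting back through $\bar\phi$ and using the invariance of the $k$-Lagrangian condition under multisymplectomorphisms, I would conclude that $\pi(L\cap N)$ is $k$-Lagrangian.

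I expect the main obstacle to be the bookkeeping around the descent of $\phi$ to the quotients: one must check that $\phi$ indeed maps $\mathcal{F}$ onto the canonical foliation and that the constant-rank hypothesis in the statement translates exactly into the constant rank of $\widetilde{\mathcal{E}} = \mathcal{E}\cap TQ$ needed for $\bigwedge^k_r Q$ to be smooth, so that $\bar\phi$ is well defined and the reduced form $\omega_N$ genuinely corresponds to the canonical structure on $\bigwedge^k_r Q$ — a compatibility that is forced by the defining relation $\pi^\ast\omega_N = i^\ast\omega$ together with \cref{CoisotropicReductionMultisymplectic}. Once these identifications are in place, the remainder is a direct appeal to \cref{prop:KLagrangianclosed} and the locality of the $k$-Lagrangian property, so no further substantial computation is required.
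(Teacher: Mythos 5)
Your proposal follows essentially the same route as the paper: the paper likewise establishes the bundle model $N=\bigwedge^k_r L\big|_Q$ first (computing $(TN)^{\perp,k}=\ker i^\ast$, identifying $N/\mathcal{F}=\bigwedge^k_r Q$ under the constant-rank hypothesis, and observing that closed sections project to closed sections, hence to $k$-Lagrangian submanifolds by \cref{prop:KLagrangianclosed}), and then invokes \cref{localformCoisotropicLagrangian} to transport the general case into that model exactly as you do. Your identification of the residual bookkeeping (descent of $\phi$ to the quotients and the translation of the constant-rank hypothesis into constant rank of $\widetilde{\mathcal{E}}=\mathcal{E}\cap TQ$) matches what the paper leaves implicit, so the argument is correct and equivalent to the paper's.
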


\section{Conclusions and further work}\label{conclusions}
In this paper we have analysed the role that Lagrangian and coisotropic submanifolds play in multisymplectic geometry, with the intention of extending as far as possible the well-known results in symplectic geometry. When dealing with forms of degree higher than 2, there are different complements to a submanifold, which enriches the geometry but at the same time makes it more complex. One of the first results obtained is the interpretation of Lagrangian submanifolds as possible dynamics, as well as the introduction of a graded bracket algebra. This makes it possible to deal with currents and conserved quantities. The main result of the paper is a coisotropic reduction theorem which we hope will be useful in applications to multisymplectic field theory.\\

In future work we have proposed the following objectives:

\begin{enumerate}

\item Apply the results obtained in the current paper to multisymplectic field theories.

\item Since some field theories are singular, we would like to develop a regularization method as in the case of singular Lagrangian dynamics (see \cite{ibort-marin}); previously, we have to prove a coisotropic embedding theorem \'a la Gotay \cite{gotay,zambon} in the context of multisymplectic geometry.

\item  Develop the covariant approach through a space-time decomposition, and interpret the coisotropic reduction in the corresponding infinite dimensional setting.

\item Following the notion of multi-Dirac (and higher Dirac) structures \cite{joris_MultiDirac1,jorisDirac2, ZambonDirac, BursztynDirac}, we would like to develop a more general extension using the graded Poisson brackets defined in the current paper.

\item Extend the results to the realm of multicontact geometry (see \cite{multicontacto}).

\end{enumerate}

\section*{Acknowledgements}

We acknowledge the financial support of Grant PID2022-137909NB-C2, the Severo Ochoa Programme for Centres of Excellence in R\&D (CEX2019-000904-S), and Severo Ochoa scholarship for master students. 

\phantomsection
\addcontentsline{toc}{section}{References}
\printbibliography
\end{document}